\documentclass[11pt]{article}


\usepackage{mathrsfs,amssymb,amsmath,amsfonts,dsfont}
\usepackage{amsthm}
\usepackage{bbm}
\usepackage{tipa}
\usepackage{enumerate}
\usepackage{color}
\usepackage[titletoc,title]{appendix}

\usepackage{cite}
\usepackage[hyphens]{url}
\usepackage[bookmarks=false,hidelinks]{hyperref}

\usepackage{arydshln}
\usepackage{mathtools}
\mathtoolsset{showonlyrefs}

\topmargin=0pt \pagestyle{plain} \raggedbottom 
\topmargin=-2.25cm 
\oddsidemargin=0mm \textwidth 160mm \textheight 240mm

\newtheorem{theorem}{Theorem}[section]
\newtheorem{lemma}[theorem]{Lemma}

\newtheorem{corollary}[theorem]{Corollary}

\theoremstyle{definition}

\newenvironment{keywords}{{\bf Key words: }}{}
\newenvironment{AMS}{{\bf AMS subject classification: }}{}

\numberwithin{equation}{section}
\newcommand{\overbar}[1]{\mkern 1.5mu\overline{\mkern-1.5mu#1\mkern-1.5mu}\mkern 1.5mu}

\newcommand{\abs}[1]{\left\vert#1\right\vert}
\newcommand{\norm}[1]{\left\Vert#1\right\Vert}

\newcommand{\udots}{\mathinner{\mskip1mu\raise1pt\vbox{\kern7pt\hbox{.}}
\mskip2mu\raise4pt\hbox{.}\mskip2mu\raise7pt\hbox{.}\mskip1mu}}

\title{{\bf Stochastic Differential Equations with Local Growth Singular Drifts}\footnote{Research supported by National Key R\&D Program of China (No. 2020YFA0712700) and the NSFC (No. 11931004, 12090014, 12288201) and the support by key Lab of Random Complex Structures and Data Science, Youth Innovation Promotion Association (2020003), Chinese Academy of Science. 
}
} 
\author{
{\bf  Wenjie Ye} 
\thanks{E-mail address: yewenjie@amss.ac.cn(W. Ye)}
\\
\footnotesize{$^{}$ Academy of Mathematics and Systems Science, Chinese Academy of Sciences, Beijing 100190, China}
}

\begin{document}
\maketitle
\begin{abstract}
In this paper, we study the weak differentiability of global strong solution of stochastic differential equations, the strong Feller property of the associated diffusion semigroups and {the global stochastic  flow property} in which the singular drift $b$ and the weak gradient of  Sobolev diffusion $\sigma$ are supposed to satisfy $\norm{\abs{b}\cdot\mathds{1}_{B(R)}}_{p_1}\le O((\log R)^{{(p_1-d)^2}/{2p^2_1}})$ and $\norm{\norm{\nabla \sigma}\cdot\mathds{1}_{B(R)}}_{p_1}\le O((\log ({R}/{3}))^{{(p_1-d)^2}/{2p^2_1}})$ respectively. The main tools for these results are the decomposition of global two-point motions in \cite{fang2007}, Krylov's estimate, Khasminskii's estimate, Zvonkin's transformation and the characterization for Sobolev differentiability of random fields in \cite{xie2016}.
\end{abstract}

\begin{keywords}
Weak differentiability, Strong Feller property, {Stochastic  flow}, Krylov's estimates, Zvonkin's transformation.
\end{keywords}\\

\begin{AMS}
$60$H$10$, $60$J$60$
\end{AMS}

\section{Introduction and main results}
In this paper, we consider the following $d$-dimension stochastic differential equations (SDEs, for short) 
\begin{equation}\label{eq:b}
    \left\{\begin{aligned}
         & dX_t =  b(X_t)\,dt + \sigma(X_t)\,dW_t,\ t\in[0,T],\\
         & X_0=x\in \mathbb{R}^d.
\end{aligned}\right.
\end{equation}
Here, $\{W_t\}_{t\in[0,T]}$ is a standard Wiener process in $\mathbb{R}^d$ which defined on a complete filtered probability space $(\Omega,\mathscr{F},\mathbb{P},\{\mathscr{F}_t\}_{t\ge 0})$. The coefficients $b:\mathbb{R}^d\rightarrow \mathbb{R}^d$ and $\sigma:\mathbb{R}^d\rightarrow \mathbb{R}^{d\times d}$ are both  Borel measurable function. It is well-known that stochastic differential equation defined a global stochastic homeomorphism flow  if $b$ and $\sigma$ satisfy global Lipschitz conditions and linear growth conditions.
In the past decades, for the non-Lipschitz coefficients SDEs there is increasing interest about their solutions and their properties(for example, the strong completeness property, the weak differentiability,  stochastic homeomorphism flow property and so on).

Yamada and Ogura\cite{Yamada1981} proved the existence of global flow of homeomorphisms for one-dimensional SDEs under local Lipschitz and linear growth conditions. Li\cite{li1994} proved  the strong completeness property of  SDEs \eqref{eq:b} by studying the derivative flow equation of SDEs \eqref{eq:b}.  Fang and Zhang \cite{fang2005} used the Gronwall-type estimate to study SDEs under non(local) Lipschitz conditions. Fang, Imkeller and Zhang \cite{fang2007} proved Stratonovich equation defined
a global stochastic homeomorphism flow  if the coefficients are just locally Lipschitz and Lipschitz coefficients with mild growth. Chen and Li\cite{chen2014} studied Sobolev regularity of equation \eqref{eq:b} and strong completeness property when  $b$ and $\sigma$ are Sobolev coefficients.

When $\sigma=I$ and $b$ is bounded measurable, Veretennikov\cite{veretennikov1979} first proved existence and uniqueness of the strong solution. When $\sigma=I$ and $b$ satisfy
\begin{equation}\label{index-condition}
    \begin{aligned}
         & \left(\int_0^T \left(\int_{\mathbb{R}^d}\abs{b}^p\,dx\right)^{\frac{q}{p}}\,dt\right)^{\frac{1}{q}}<\infty,\quad p,q\in[2,\infty),\quad\frac{2}{q}+\frac{d}{p}<1,
    \end{aligned}
\end{equation}
Krylov and R\"ockner\cite{Krylov2005} using the technique of PDEs proved existence and uniqueness of the strong solution. The similar result in time-homogeneous case was obtained  by Zhang and Zhao \cite{zhang2018} and dropped the assumption $\int^t_0\abs{b(X_s)}^2\,ds<\infty,\  a.s. $. Fedrizzi and Flandoli\cite{fedrizzi2013} proved the existence of a stochastic flow of $\alpha$-Hölder homeomorphisms for solutions of SDEs  and weak differentiability of solutions of SDEs under condition \eqref{index-condition}. Zhang \cite{zhang2011b,zhang2016} extended the results of Krylov and R\"ockner\cite{Krylov2005} to the case of multiplicative noises,  the well posedness of solutions, the weak differentiability of solutions be obtained {and the solution forms a stochastic flow of homeomorphisms of $\mathbb{R}^d$ be proved}, the main tools are Krylovl's estimate and  Zvonkin's transformation.  In \cite{xie2016},  a characterization for Sobolev differentiability of random field be established. With the characterization,  the weak differentiability of solutions be proved  under local Sobolev integrability and sup-linear growth assumptions. We refer the reader for\cite{zvonkin1974,krylov1980,g2001,zhang2005,zhang2011b,zhang2016,xie2016,wang2016} and references therein  about the applications of Krylov's estimate, Zvonkin's transformation and the characterization for Sobolev differentiability of random field.  More recently, the critical case i.e. $p=d$ in time-homogeneous case, $\frac{2}{q}+\frac{d}{p}=1$ in time-inhomogeneous have been explored, see\cite{krylov2021a,krylov2021b,krylov2021c,krylov2021d,rockner2020,rockner2021} and references therein.

In \cite{fang2005}, Fang, Imkeller and Zhang obtained  a global estimates by using global decomposition of two-point motions and local estimates. In this paper, we will base on the decomposition,  Krylov's estimate, Khasminskii's estimate, Zvonkin's transformation and the characterization of Sobolev differentiability of random fields to obtain the well posedness and the weak differentiability of solutions, the strong Feller property of associated semigroups and {stochastic  flow property} of SDEs \eqref{eq:b} under the following assumption:
\begin{itemize}
    \item[$\mathbf{(H^b)}$] There exist two positive constants $\beta$ and $\tilde{\beta}$ such that for all $R\ge 1$,
\begin{equation}
    \begin{aligned}
          \left(\int_{B(R)} \abs{b(x)}^{p_1} \,dx\right)^{\frac{1}{p_1}}\le \beta I_b(R)+\tilde{\beta},
    \end{aligned}
\end{equation}
where $B(R):=\{x\in \mathbb{R}^d; \abs{x}\le R\}$ is a ball with  center $0$ and radius $R$, $\abs{\cdot}$ denote the Euclidiean norm, $p_1>d$ is a constant  and $I_b(R)=(\log R+1)^{{(p_1 -d)^2 }/{(2p^2_1})}$. \\
{\item[{ $(\bf{H}^\sigma_1)$}] There exist a constant $\delta\in(0,1)$  such that  for all $x,\xi \in \mathbb{R}^d$,
\begin{equation}
    \delta^{\frac{1}{2}}\abs{\xi}\le \abs{\sigma^\top(x)\xi}\le \delta^{-\frac{1}{2}}\abs{\xi},
\end{equation}
and there exists a constant $\varpi \in (0,1) $ such that  for all $x,y\in \mathbb{R}^d$,
$$
\norm{\sigma(x)-\sigma(y)}\le \delta^{-\frac{1}{2}}\abs{x-y}^\varpi.
$$
Here, we denote $\sigma^\top$ the transpose of matrix $\sigma$, $\norm{\cdot}$ the Hilbert-Schmidt norm.}

\item[{ $(\bf{H}_2^\sigma)$}]There exist two positive constants $\beta$ and $\tilde{\beta}$ (same with $\mathbf{(H^b)}$) such that for all $R\ge 1$,
\begin{equation}
    \left(\int_{B(R)}\norm{\nabla \sigma}^{p_1}\,dx\right)^{\frac{1}{p_1}}\le \beta I_\sigma(R)+\tilde{\beta}, 
\end{equation}
where $\nabla \sigma:=[\nabla \sigma^1,\cdots,\nabla\sigma^d]$ and $I_\sigma(R)=(\log({R}/{3})+1)^{{(p_1 -d)^2 }/{(2p^2_1})}$.
\end{itemize}

Our main results are given as the
following theorem:
{
\begin{theorem}\label{main-theorem}
   Under the conditions  $\mathbf{(H^b)}$, $\mathbf{(H^\sigma_1)}$ and $\mathbf{(H^\sigma_2)}$, there exists a unique global strong solution to \eqref{eq:b}. Moreover, we have the following conclusions:
\begin{enumerate}[(A)]
    \item For all $t\in [0,T]$ and almost all $\omega$, the mapping $x\mapsto X_t(\omega,x)$ is Sobolev differentiable and for any $p\ge 2$, there exist constants $\mathbf{C},n>0$ such that for Lebesgue almost all $x\in \mathbb{R}^d$,
    \begin{equation*}
        \begin{aligned}
             & \mathbb{E}\left[ \sup_{t\in[0,T]} \norm{\nabla X_t(x)}^p \right]\le \mathbf{C}(1+\abs{x}^n),
        \end{aligned}
    \end{equation*}
where $\nabla$ denotes the gradient in the distributional sense.
\item For any $t\in [0,T]$ and any bounded measurable function $f$ on $\mathbb{R}^d$, 
\begin{equation*}
    \begin{aligned}
         & x\mapsto \mathbb{E}[f(X_t(x))] \text{\ is continuous},
    \end{aligned}
\end{equation*}
i.e. the semigroup $P_t f(x):= \mathbb{E}[f(X_t(x))]$ is strong Feller.
\item {For all $t\in[0,T]$, $x\in \mathbb{R}^d$ and almost all $\omega$, the mapping $(t,x)\mapsto X_{t}(\omega,x)$ is continuous on $[0,T]\times\mathbb{R}^d$ and for almost all $\omega$,  $x\mapsto X_t(\omega,x)$ is one-to-one on $\mathbb{R}^d$. }
\end{enumerate}
\end{theorem}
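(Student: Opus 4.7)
The plan is to combine a localization-and-Zvonkin-transform argument on each ball $B(R)$ with the two-point motion decomposition of \cite{fang2007} in order to glue local estimates into the global conclusions. The specific exponent $(p_1-d)^2/(2p_1^2)$ in $I_b$ and $I_\sigma$ should be the critical rate at which Khasminskii-type exponential moments, summed along a chain of balls, just barely remain finite; this is the reason for that particular logarithmic growth assumption.

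First I would truncate $b$ and $\sigma$ on $B(R)$ (extending the diffusion by the identity and $b$ by zero outside) to obtain bounded coefficients $b_R,\sigma_R$, and apply the existing well-posedness and flow theorems (e.g.\ the results of Zhang quoted in the introduction) to get a strong solution $X^R_t(x)$ that is a stochastic flow of homeomorphisms up to the exit time $\tau_R(x)$ from $B(R)$. Consistency of these solutions on $\{t<\tau_R\}$ defines a global candidate solution up to the explosion time $\zeta(x)=\lim_R \tau_R(x)$. To prove $\zeta=\infty$ a.s., I would set up a local Zvonkin transform $\Phi_R=\mathrm{id}+u_R$, where $u_R$ solves the elliptic system
\begin{equation*}
\tfrac{1}{2}\mathrm{Tr}(\sigma\sigma^\top\nabla^2 u_R)+b\cdot\nabla u_R-\lambda_R u_R=-b\quad\text{on }B(R),
\end{equation*}
with $\lambda_R$ chosen so that $\|\nabla u_R\|_\infty\le 1/2$ by the standard $W^{2,p_1}$-estimate; here the size of $\lambda_R$ grows polynomially in the $L^{p_1}(B(R))$ norms of $b$ and $\nabla\sigma$, hence polylogarithmically in $R$ by $\mathbf{(H^b)}$ and $\mathbf{(H_2^\sigma)}$. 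In the new coordinates $Y^R_t=\Phi_R(X^R_t)$, the drift is Lipschitz-like and the diffusion matrix is Hölder with moderate modulus, which permits Krylov's estimate and Khasminskii's lemma to control $\exp\big(C\int_0^{\tau_R}|b|^2+\|\nabla\sigma\|^2\,ds\big)$.

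The main obstacle, and the genuine novelty, is the passage from local to global. For this I would use the two-point motion decomposition of \cite{fang2007}: a chain $B(R_0)\subset B(R_1)\subset\cdots$ with $R_k$ growing geometrically, combined with the product representation of $|X_t(x)-X_t(y)|$ across successive excursions from these balls. On the $k$-th annulus the Krylov/Khasminskii exponential moment is bounded by $\exp\big(C(\beta I_b(R_k)+\tilde{\beta})^2+C(\beta I_\sigma(R_k)+\tilde{\beta})^2\big)$, and the choice of exponent $(p_1-d)^2/(2p_1^2)$ is precisely what makes the resulting product telescope into a bound polynomial in $\log R_k$. This yields non-explosion, the moment estimate
\begin{equation*}
\mathbb{E}\Big[\sup_{t\in[0,T]}(1+|X_t(x)|)^{q}\Big]\le \mathbf{C}(1+|x|^{n})
\end{equation*}
for each $q\ge 1$, and uniform-in-$R$ control of $\tau_R$; strong uniqueness follows from the local uniqueness on each $B(R)$ together with $\zeta=\infty$.

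With these ingredients, part (A) follows by the characterization of Sobolev differentiability of random fields from \cite{xie2016}: it suffices to show that for a.e.\ $x$ the difference quotients $(X_t(x+h)-X_t(x))/|h|$ admit $L^p(\Omega)$-bounds locally uniform in $h$, which I obtain from the same Zvonkin transform applied to the pair $(X_t(x),X_t(x+h))$ and the two-point decomposition. Part (B), strong Feller, is then a consequence of the fact that $Y^R_t=\Phi_R(X^R_t)$ has a Lipschitz drift and Hölder diffusion, so the transition kernel of $Y^R$ is strong Feller by classical theory; pulling back by $\Phi_R^{-1}$, which is a homeomorphism, and letting $R\to\infty$ using the non-explosion estimate gives strong Feller for $P_t$. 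Part (C) on joint $(t,x)$-continuity and injectivity comes from the Kolmogorov continuity criterion applied to the quantitative modulus of $x\mapsto X_t(x)$ yielded by part (A), together with the $\mathbb{R}^d$-valued stochastic flow injectivity transported back through $\Phi_R$ on each $B(R)$ and patched by $\zeta=\infty$. Throughout, the crucial accounting is the balance between the logarithmic growth of $I_b,I_\sigma$ and the exponential cost of Khasminskii's lemma; checking that inequality carefully on each annulus is where I expect the real technical work to lie.
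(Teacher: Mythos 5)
Your overall skeleton — truncate, apply Zvonkin's transformation, establish $R$-dependent Krylov/Khasminskii estimates, glue via a decomposition of two-point motions over a chain of balls, and appeal to the characterization of Sobolev differentiability from \cite{xie2016} — matches the paper's strategy, and your heuristic for why the exponent $(p_1-d)^2/(2p_1^2)$ is the critical rate is essentially correct: $\lambda^R \sim I_b(R)^{2p_1/(p_1-d)}$, the Khasminskii bound costs $\exp\bigl(\mathbf{C}(\lambda^R)^{p_1/(p_1-d)}\bigr)$, and this is polynomial in $R$ exactly when $I_b(R)\sim(\log R)^{(p_1-d)^2/(2p_1^2)}$. (A minor mismatch: the paper sums over $R=1,2,3,\dots$ against indicators $\mathds{1}_{\{R-1\le D_T<R\}}$ rather than a geometric chain, but this is cosmetic.)

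However, there is one genuine gap. You propose to truncate the diffusion by ``extending $\sigma$ by the identity outside $B(R)$,'' and the paper explicitly identifies this step as one of the two main difficulties. Any pointwise interpolation of the form $\sigma_R=(1-\chi_R)\sigma+\chi_R I$ can be degenerate in the transition annulus (already in $d=1$ with $\sigma\equiv -1$ the interpolant vanishes), and interpolating at the level of $\sigma\sigma^\top$ and taking square roots destroys control of $\nabla\sigma_R$, which you need for $\mathbf{(H_2^\sigma)}$ to propagate to the truncation. The paper circumvents this with a dimension-doubling construction: it works with a $2d$-dimensional Brownian motion $\widetilde W=(W,\overbar W)$ and sets $\sigma^R:=[\rho_R\sigma,\ h_R\bar\sigma]$ for nested cut-offs $\rho_R,h_R$ with $\rho_R^2+h_R^2$ bounded below, so that $\sigma^R(\sigma^R)^\top=\rho_R^2\,\sigma\sigma^\top+h_R^2\,\bar\sigma\bar\sigma^\top$ is uniformly elliptic with constant independent of $R$, while $\nabla(h_R\bar\sigma)=\nabla h_R\cdot\bar\sigma$ is bounded because $\bar\sigma$ is a fixed constant matrix. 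Your proposal as stated does not produce an $R$-uniformly elliptic $\sigma_R$ with $\norm{\nabla\sigma_R}_{L^{p_1}}$ controlled by $\mathbf{(H_2^\sigma)}$, and without this the whole Zvonkin/Krylov machinery does not apply uniformly in $R$.

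A secondary issue: you set up the Zvonkin PDE ``on $B(R)$'' with the original coefficients. This would require choosing boundary conditions and the Sobolev--Morrey embedding constant on $B(R)$ scales with $R$, which muddies the bookkeeping needed to get $\norm{\nabla u_R}_\infty\le 1/2$ with an explicitly controlled $\lambda_R$. The paper instead solves the elliptic equation on all of $\mathbb{R}^d$ with the truncated (globally $L^{p_1}$, uniformly elliptic) coefficients $b^R,\sigma^R$, so the global estimate $\norm{u^R}_{1,\infty}\lesssim\lambda^{(d/p_1-1)/2}\norm{b^R}_{p_1}$ has constants independent of $R$ and can be killed by choosing $\lambda=\lambda^R_H$. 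This is cleaner and is actually what makes the logarithmic bookkeeping close; I'd rework that step to match. Everything downstream of a good truncation — the product/Doléans--Dade representation for the two-point ratio, the summation over annuli, the $1/|X_t(x)-X_t(y)|$ trick for injectivity, and the strong Feller argument by $\tau_R\to\infty$ — is in the right place.
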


These results will be proved in section \ref{sec6}.

We would like to compare the work in\cite{zhang2018,zhang2011b,xie2016} with the present paper and explain the contributions made in this paper. Following the proof of \cite{zhang2018}, we generalized  \cite[Theorem $3.1$]{zhang2018} to multiplicative noises (cf. Theorem \ref{th:XR-exist}). {In the time-inhomogeneous case, Xie and Zhang\cite{xie2016} proved the weak differentiability of SDEs and the strong Feller property of the associated diffusion semigroup under local Sobolev integrability and  sup-linear growth assumptions.  In the present paper, we removed the sup-linear growth condition (H2) in \cite{xie2016} by replacing the local Sobolev integrability (H1) in \cite{xie2016} with stronger assumptions $\mathbf{(H^b)}$, $\mathbf{(H^\sigma_1)}$ and $(\mathbf{H^\sigma_2})$, proved  the weak differentiability of SDEs and the strong Feller property of the associated diffusion semigroup in the time-homogeneous case.}  In the time-inhomogeneous case, Zhang \cite{zhang2011b} proved the solution of SDEs {forms a stochastic flow of homeomorphisms} under conditions: 
$$\abs{b},\ \norm{\nabla \sigma} \in L^{p_1}_{loc}(\mathbb{R}_+;L^{p_1}(\mathbb{R}^d))\ (p_1>d+2).$$ In the time-homogeneous case, the conditions will be
\begin{equation}\label{b-sigma-conditions}
   \abs{b},\ \norm{\nabla \sigma} \in L^{p_1}(\mathbb{R}^d)\ (p_1>d). 
\end{equation}
{Our main result Theorem \ref{main-theorem}(C) strengthen the   one-to-one property of stochastic flow  in \cite[Theorem $1.1$]{zhang2011b} by improving the conditions  \eqref{b-sigma-conditions} with mild growth conditions $(\mathbf{H}^b)$ and $(\mathbf{H}^\sigma_2)$.}

For the proof of Theorem \ref{main-theorem}, there are two main difficulties. The one is finer estimates depend on $R$ is necessary for us to obtain the order of growth in $\mathbf{(H^b)}$ and $\mathbf{(H^\sigma_2)}$ by the decomposition of global two-point motions. By our knowledge, all existing results about Krylov's estimate and Khasminskii's estimate such as \cite{zhang2018,zhang2011b,zhang2016,xie2016} do not obviously depend on radius $R$.  

Another difficulty is that we need an appropriate truncation for $\sigma$ due to SDEs \eqref{eq:b} with multiplicative noises. If we directly truncate $\sigma$ by characteristic function $\mathds{1}_{\abs{x}\le R}$, then the truncated $\sigma$ will be degenerate. Chen and Li\cite{chen2014} provides a truncation method which can guarantee truncated $\sigma$ is not degenerate, but it seems difficult to estimate the gradient of  truncated $\sigma$ by $(\bf{H}_2^\sigma)$.

We also give some remarks related to the proof of our main results and conditions posed in it.
\begin{itemize}
    \item In  Theorem \ref{main-theorem}, we just consider the time-homogeneous case, but  by carefully tracking the proof of Theorem \ref{main-theorem}, Our idea still work for time-inhomogeneous case.
    \item {If the condition $(\bf{H}^\sigma_1)$ of Theorem \ref{main-theorem}  be replaced by\\
   { $(\bf{H}^\sigma_1)_{loc}$} There exist a constant $\delta_R\in(0,1)$ depend on $R$ such that  for all $x\in B(R),\xi \in \mathbb{R}^d$,
\begin{equation}
    \delta^{\frac{1}{2}}_R\abs{\xi}\le \abs{\sigma^\top(x)\xi}\le \delta^{-\frac{1}{2}}_R\abs{\xi},
\end{equation}
and there exists two constants $L>0$ and $\varpi \in (0,1) $ such that  for all $x,y\in \mathbb{R}^d$,
$$
\norm{\sigma(x)-\sigma(y)}\le L\abs{x-y}^\varpi,
$$
where the growth of $\delta^{-1}_R$ be  mild  about $R$. The techniques in the proof of Theorem \ref{main-theorem} still can be used. Indeed, if $b$ and $\sigma$ satisfy
$\norm{\abs{b}\cdot\mathds{1}_{B(R)}}_{p_1}\le O(\tilde{I}_b(R))$, $\norm{\norm{\nabla \sigma}\cdot\mathds{1}_{B(R)}}_{p_1}\le O(\tilde{I}_b(R/3))$ and the assumption $(\bf{H}^\sigma_1)_{loc}$ holds true, then the following assumptions:\\
{$(\bf{H}^{\sigma^R}_1)_{loc}$} There exist a positive constant $\tilde{\delta}_R^{-{1}/{2}}=\mathbf{C}(d,L)\cdot(\delta_R^{-{1}/{2}})>0$ depend on $R$ such that for all $x,\xi\in \mathbb{R}^d$,
\begin{equation}
    \tilde{\delta}_R^{\frac{1}{2}}\abs{\xi}\le \abs{(\sigma^R)^\top(x)\xi} \le \tilde{\delta}_R^{-\frac{1}{2}}\abs{\xi},  
\end{equation}
and for all $x,y\in\mathbb{R}^d$,
\[
\norm{\sigma^R(x)-\sigma^R(y)}\le \tilde{\delta}^{-\frac{1}{2}}_R\abs{x-y}^\varpi.
\]\\
$(\bf{H}_2^{\sigma^R})_{loc}$ There exist  constants $\mathbf{C}(d,L)$ such that for all $R\ge 1$,
\begin{equation}
    \left(\int_{\mathbb{R}^d} \norm{\nabla \sigma^R}^{p_1}\,dx\right)^\frac{1}{p_1} \le \mathbf{C}(d,L)\cdot\tilde{\delta}^{-\frac{1}{2}}_{3R}+O(\tilde{I}_b(R)),
\end{equation}
hold true, where $O(\tilde{I}_b(R))$ means there exist two constants $C>0$ and $R_0$ such that $O(\tilde{I}_b(R))\le C\tilde{I}_b(R)\ \,\forall \,R\ge R_0$.   On the other hand, by going through carefully the proof of Theorem \ref{studied-pde-theorem} we can find two continuous increasing functions $G_1:\mathbb{R}_{+}\rightarrow \mathbb{R}_{+}$ and $G_2:\mathbb{R}_{+}\rightarrow \mathbb{R}_{+}$ such that $C_1$ and $C_2$ in Theorem \ref{studied-pde-theorem} are equal to $G_1(\tilde{\delta}_R^{-\frac{1}{2}})$ and $G_2(\tilde{\delta}_R^{-\frac{1}{2}})$. The $C_0(\tilde{\delta}_R^{-\frac{1}{2}})$ (the key to obtain $G_1$) in the proof of Theorem \ref{studied-pde-theorem} can be obtained by changing of coordinates to reduce $L^{\sigma^R(x_0)}$ to $\Delta$.
The $C_j(\tilde{\delta}_R^{-\frac{1}{2}})$ and $k_j(\tilde{\delta}_R^{-\frac{1}{2}})$ in \eqref{heat_kernel} (the key to obtain $G_2$) can be obtained by going through carefully the proof of Page $356$ to Page $378$ in \cite{ladyzenskaja1968}. Finally, we can take $\tilde{\delta}^{-\frac{1}{2}}_{3R}$ satisfy $\mathbf{C}(d,L)\cdot\tilde{\delta}^{-\frac{1}{2}}_{3R}\le \mathbf{C}\cdot\tilde{I}_b(R)$
and let $\lambda^R=(2G_2(\tilde{I}_b(R))\tilde{I}_b(R))^{2p_1/(p_1-d)}$ in Lemma \ref{lemma-u^R-estimate}. Tracking the proof in Theorem \ref{main-theorem}, we can find a concrete $\tilde{I}_b(R)$ with enough mild growth such that   the results in Theorem \ref{main-theorem} still hold true.
}
    \item In \cite{zhang2011b}, the well-known Bismut-Elworthy-Li's formula (cf. \cite{Elworthy1994}) was proved.   {But even if $\sigma(x)\equiv \sigma$(in this case, we do not need to truncate $\sigma$),} it seems difficult to prove the Bismut-Elworthy-Li's formula for the solution of SDEs \eqref{eq:b} under assumptions of this paper  due to $\mathbb{E}[\norm{\nabla X^R_t(x)}^2]\le C(R)$ and  $C(R)\rightarrow \infty$ when $R\rightarrow \infty$.
    \item {The local estimates \eqref{eq:X-x-y}, \eqref{t-s} and \eqref{1-plus-x^2} is seemingly not enough to obtain the onto property of the map $x\mapsto X_t(\omega,x)$. In fact, if we define 
$$\mathscr{X}_t(x):=\begin{cases}
    \left( 1+\abs{X_t(\frac{x}{\abs{x}^2})} \right)^{-1},\quad &x\neq 0,\\
0,\quad\quad &x=0.
\end{cases}$$ 
We just can obtain for any $k\in \mathbb{N}$, $x,y\in \{ x: \frac{1}{k}\le\abs{x}\le 1 \}\cup \{{0}\}$,
\begin{equation*}
    \begin{aligned}
         & \mathbb{E}\left[ \abs{\mathscr{X}_t(x)-\mathscr{X}_t(y)}^p \right]\le \mathbf{C}(k)\abs{x-y}^p.
    \end{aligned}
\end{equation*}
Notice that, the domain $\{ x: \frac{1}{k}\le\abs{x}\le 1 \}\cup \{{0}\}$ is not connected, we can not obtain $x\mapsto\mathscr{X}_t(x)$ exist a continuous version on $\{x:\abs{x}\le 1  \}$.
}
\item For the critical case i.e. $p_1=d$, our idea will not work since Zvonkin's transformation cannot be used. On the other hand, $\mathbf{(H^b)}$ and $\mathbf{(H^\sigma_2)}$ seemingly indicate the order of growth will be degenerate in the critical case.
\end{itemize}

The rest of this paper is organized as follows: In section \ref{sec2}, we will present some preliminary knowledge. In section \ref{sec3},  we devote to construct the cut-off functions to truncate SDEs \eqref{eq:b} and verify assumptions. In section \ref{sec4}, we provide a proof of Krylov's estimate and Khasminskii's estimate. In section \ref{sec5}, we use Zvonkin's transformation to estimate truncated SDEs \eqref{eq:newsde}. In section \ref{sec6}, we complete the proof of the main theorem \ref{main-theorem}. Finally, we give a detailed proof of Theorem \ref{studied-pde-theorem} in Appendix.

\section{Preliminary}\label{sec2}
In this section, we introduce some notations, function spaces and well-known theorems which will be used in this paper. 

We use $:=$ as a way of definition. Let $\mathbb{N} $ be the collection of all positive integer.   For any $a,b\in \mathbb{R}$, set $a\wedge b:=\min\{a,b\}$ and $a\vee b:=\max\{a,b\}$. We use $a\lesssim b$ to denote there is a constant $C$ such that $a\le Cb$, use
$a\asymp b$  to denote $a\lesssim b$ and $b\lesssim a$. For  functions $f$ and $g$, we use $f*g$ to denote the convolution of $f$ and $g$.

Let $L^p(\mathbb{R}^d)$ be  $L^p$-space on $\mathbb{R}^d$ with norm
\[
\norm{f}_p:= \left(\int_{\mathbb{R}^d}\abs{f}^p\,dx\right)^{\frac{1}{p}}<+\infty,\ \forall f\in L^p(\mathbb{R}^d).
\]
Let $W^{m,p}(\mathbb{R}^d)$ be  Sobolev space on $\mathbb{R}^d$ with norm
\[
\norm{f}_{m,p}:=\sum^m_{i=0}\norm{\nabla^i f}_p<+\infty,\ \forall f\in W^{m,p}(\mathbb{R}^d),
\]
where $\nabla^i$ denotes the $i$-order gradient operator. \\
For $0\le\alpha\in \mathbb{R}$ and $p\in [1,+\infty)$, the Bessel potential space $H^{\alpha,p}(\mathbb{R}^d)$ is defined by 
\[
H^{\alpha,p}:=(I-\Delta)^{-\frac{\alpha}{2}}(L^p(\mathbb{R}^d))
\]
with norm
\[
\norm{f}_{\alpha,p}:=\norm{(I-\Delta)^{\frac{\alpha}{2}} f}_p,\ \forall f\in H^{\alpha,p}(\mathbb{R}^d).
\]
Let $C^{\alpha}(\mathbb{R}^d)$ be H\"older space on $\mathbb{R}^d$ with norm
\[
\norm{f}_{C^\alpha}:=\sum^{\left\lfloor \alpha \right\rfloor  }_{i=0} \norm{\nabla^i f}_{\infty}+\sup_{x\neq y}\frac{\abs{\nabla^{\left\lfloor \alpha \right\rfloor  }f(x)-\nabla^{\left\lfloor \alpha \right\rfloor  }f(y)}}{\abs{x-y}^{\alpha-\left\lfloor \alpha \right\rfloor  }}<+\infty,\ \forall f\in C^{\alpha}(\mathbb{R}^d),
\]
where $\left\lfloor \alpha \right\rfloor  $ denotes the integer part of $\alpha$.
Let $C^\infty_0(\mathbb{R}^d)$ be a collection of all smooth function with compact support in $\mathbb{R}^d$.\\
For $\alpha\in (0,2)$ and $p\in (1,+\infty)$, we have 
\begin{equation}\label{frac-norm-equ}
  \norm{f}_{\alpha,p}\asymp \norm{(I-\Delta^{\frac{\alpha}{2}})f}\asymp \norm{f}_p +\norm{\Delta^{\frac{\alpha}{2}}f}_p,  
\end{equation}
where $\Delta^{\frac{\alpha}{2}}:=-(-\Delta)^{\frac{\alpha}{2}}$ is the fractional Laplacian.\\
Let $f$ be a locally integrable function on $\mathbb{R}^d$, $\mathcal{M}$ be the Hardy-Littlewood maximal operator defined by
\[
\mathcal{M}f(x):=\sup_{0<R<+\infty}\frac{1}{\abs{B(R)}}\int_{B(R)}f(x+y)\,dy,
\]
here, with a bit of abuse of notations, $\abs{B(R)}$ denotes the volume of ball $B(R)$.

\begin{theorem}[Sobolev embedding theorem]
If $k>l>0,p<d$ and $1\le p<q<\infty$ satisfy $k-\frac{d}{p}=l-\frac{d}{q}$, then
\[
H^{k,p}(\mathbb{R}^d)\hookrightarrow H^{l,q}(\mathbb{R}^d).
\]
If $\gamma\ge 0$ and $\gamma<\alpha-\frac{d}{p}$, then
\[
H^{\alpha,p}(\mathbb{R}^d)\hookrightarrow C^{\gamma}(\mathbb{R}^d).
\]

\end{theorem}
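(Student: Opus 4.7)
The plan is to treat this as a classical result from Bessel potential space theory. Both embeddings reduce to kernel estimates for the Bessel kernel $G_\alpha$, defined by $\widehat{G_\alpha}(\xi)=(1+\abs{\xi}^2)^{-\alpha/2}$, combined with the Hardy--Littlewood--Sobolev (HLS) inequality and direct size/smoothness bounds on $G_\alpha$.

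For the first embedding $H^{k,p}\hookrightarrow H^{l,q}$, the plan is to reduce to the case $l=0$. Since $(I-\Delta)^{l/2}$ is an isometry from $H^{k,p}$ onto $H^{k-l,p}$ and from $H^{l,q}$ onto $L^q$, it suffices to prove $H^{\alpha,p}\hookrightarrow L^q$ with $\alpha:=k-l>0$ satisfying $\tfrac{1}{q}=\tfrac{1}{p}-\tfrac{\alpha}{d}$. Any $f\in H^{\alpha,p}$ may be written $f=G_\alpha*g$, where $g=(I-\Delta)^{\alpha/2}f\in L^p(\mathbb{R}^d)$ and $\norm{f}_{\alpha,p}\asymp\norm{g}_p$. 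Using the classical pointwise bound $G_\alpha(x)\le C\abs{x}^{\alpha-d}$ near the origin together with the exponential decay of $G_\alpha$ at infinity, one bounds $\abs{G_\alpha*g}$ pointwise by a constant multiple of the Riesz potential $I_\alpha(\abs{g})$. The HLS inequality then yields $\norm{I_\alpha\abs{g}}_q\le C\norm{g}_p$ with exactly the claimed exponents, which is the desired embedding.

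For the second embedding $H^{\alpha,p}\hookrightarrow C^\gamma$ with $\gamma<\alpha-\tfrac{d}{p}$, again write $f=G_\alpha*g$. Boundedness of $f$ follows from H\"older's inequality applied to $f(x)=\int G_\alpha(x-y)g(y)\,dy$ together with $G_\alpha\in L^{p'}(\mathbb{R}^d)$, which is valid whenever $\alpha>\tfrac{d}{p}$. For H\"older continuity, split
\[
\abs{f(x+h)-f(x)}\le \int_{\abs{x-y}\le 2\abs{h}}\abs{G_\alpha(x+h-y)-G_\alpha(x-y)}\abs{g(y)}\,dy+\int_{\abs{x-y}>2\abs{h}}(\cdots)\,dy,
\]
estimate the near piece using the triangle inequality and the singular bound $G_\alpha(z)\lesssim\abs{z}^{\alpha-d}$, and estimate the far piece using the mean-value inequality $\abs{G_\alpha(x+h-y)-G_\alpha(x-y)}\lesssim\abs{h}\abs{x-y}^{\alpha-d-1}$. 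H\"older's inequality with exponent $p'$ applied on each piece gives a bound of order $\norm{g}_p\abs{h}^{\alpha-d/p}$, from which every exponent $\gamma<\alpha-\tfrac{d}{p}$ is readily extracted.

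The main obstacle is not conceptual but bookkeeping: one must handle the non-homogeneity of the Bessel kernel carefully when passing to the Riesz potential in the first part, and track the exponents in the two-region split carefully in the H\"older estimate. Since this statement is a classical result of Bessel potential space theory (see e.g.\ Stein, \emph{Singular Integrals and Differentiability Properties of Functions}, Chapter V), a fully self-contained argument essentially reproduces the standard textbook proof, and in a paper with the present focus one typically just cites it as background.
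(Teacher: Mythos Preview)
Your proposal is correct and follows the standard textbook approach. The paper, however, does not prove this statement at all: it is listed in the Preliminary section as a classical background result with no proof or reference given. Your closing remark that ``in a paper with the present focus one typically just cites it as background'' is exactly what happens here, so there is nothing to compare against.
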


\begin{theorem}[Hadamard's theorem]\label{Hadamard's Theorem}
    If a function $\varphi:\mathbb{R}^d\rightarrow \mathbb{R}^d$ be a $k$-order smooth function ($k\ge 1$) and satisfy:
\begin{enumerate}[(i)]
    \item $\lim_{\abs{x}\rightarrow \infty}\abs{\varphi(x)}=\infty$;
    \item for all $x\in \mathbb{R}^d$, the Jacobian matrix $\nabla \varphi(x)$ is an isomorphism of $\mathbb{R}^d$;
\end{enumerate}
Then $\varphi$ is a $C^k$-diffeomorphism of $\mathbb{R}^d$.
\end{theorem}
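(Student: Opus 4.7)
The plan is to obtain $\varphi$ as a bijection with $C^k$ inverse in four short steps. First, hypothesis (ii) together with the inverse function theorem makes $\varphi$ a local $C^k$-diffeomorphism at every point; in particular $\varphi$ is an open map, and each $x\in\mathbb{R}^d$ has an open neighborhood $U_x$ mapped diffeomorphically onto an open set. Second, hypothesis (i) makes $\varphi$ proper: for compact $K\subset\mathbb{R}^d$, $\varphi^{-1}(K)$ is closed by continuity, and bounded since an unbounded sequence $(x_n)\subset\varphi^{-1}(K)$ would by (i) force $\abs{\varphi(x_n)}\to\infty$, contradicting $\varphi(x_n)\in K$.

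The key structural step is to upgrade ``proper local diffeomorphism'' to ``covering map.'' For any $y\in\mathbb{R}^d$, the fiber $\varphi^{-1}(y)$ is discrete (by Step~1) and compact (by Step~2), hence finite, say $\{x_1,\ldots,x_n\}$. Choose disjoint relatively compact neighborhoods $U_i\ni x_i$ on which $\varphi$ is a diffeomorphism onto an open set $V_i$. The set $F:=\mathbb{R}^d\setminus\bigcup_i U_i$ is closed, and its image $\varphi(F)$ is closed in $\mathbb{R}^d$: if $\varphi(z_m)\to w$ with $z_m\in F$, properness forces $(z_m)$ to have a subsequence converging to some $z\in F$ with $\varphi(z)=w$. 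Then $V:=\bigl(\bigcap_i V_i\bigr)\setminus\varphi(F)$ is an open neighborhood of $y$ whose preimage splits as $\varphi^{-1}(V)=\bigsqcup_i\bigl(U_i\cap\varphi^{-1}(V)\bigr)$, each sheet mapped diffeomorphically onto $V$; so $V$ is evenly covered.

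Since $\mathbb{R}^d$ is simply connected (and path-connected) and the total space $\mathbb{R}^d$ is connected, the standard covering-space dichotomy forces the covering to be trivial with a single sheet, i.e.\ $\varphi$ is a global homeomorphism. Combined with the local $C^k$ inverses from Step~1, this promotes $\varphi$ to a global $C^k$-diffeomorphism of $\mathbb{R}^d$.

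The conceptual heart of the argument, and the main obstacle, lies in the third paragraph: verifying that $\varphi$ is a genuine covering map (properness is essential to close off the evenly covered neighborhood $V$ by removing $\varphi(F)$) and then invoking simple connectedness of $\mathbb{R}^d$ to promote the covering to a homeomorphism. If one prefers to avoid covering-space language, an equivalent approach lifts the straight-line path from $\varphi(x_0)$ to any prescribed $y$ by solving the ODE $\dot x(t)=(\nabla\varphi(x(t)))^{-1}(y-\varphi(x_0))$ with $x(0)=x_0$: properness prevents blow-up on $[0,1]$ and gives $\varphi(x(1))=y$ (surjectivity), while uniqueness of the lift together with the contractibility of $\mathbb{R}^d$ yields injectivity.
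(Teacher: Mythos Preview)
Your argument is correct and follows the standard route to Hadamard's global inverse function theorem (local diffeomorphism via the inverse function theorem, properness from (i), upgrade to a covering map, then triviality of the covering by simple connectedness of $\mathbb{R}^d$). However, the paper does not prove this statement: it appears in Section~\ref{sec2} (Preliminary) as a well-known background result, stated without proof alongside the Sobolev embedding theorem and the Hardy--Littlewood maximal inequalities. There is therefore no proof in the paper to compare your approach against.

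One very minor remark on your write-up: in the covering-map step you tacitly assume the fiber $\varphi^{-1}(y)$ is nonempty. This is true, since the image of a proper local diffeomorphism is both open and closed in the connected target $\mathbb{R}^d$ and hence is all of $\mathbb{R}^d$, but it is worth saying explicitly before writing the fiber as $\{x_1,\ldots,x_n\}$.
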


\begin{theorem}\label{Max-inequality}
    \begin{enumerate}[(i)]
        \item There exist a constant $C_d$ such that for all $\varphi\in C^\infty(\mathbb{R}^d)$ and $x,y\in \mathbb{R}^d$, 
        \[
            \abs{\varphi(x)-\varphi(y)}\le C_d\cdot\abs{x-y}\cdot\bigl( \mathcal{M}\abs{\nabla \varphi}(x)+\mathcal{M}\abs{\nabla \varphi}(y) \bigr).
        \]
        \item For any $p>1$, there exist a constant $C_{d,p}$ such that for all $\varphi\in L^p(\mathbb{R}^d)$,
\[
\left(\int_{\mathbb{R}^d} \big( \mathcal{M}\varphi(x) \big)^p\,dx \right)^{\frac{1}{p}}\le C_{d,p} \left(\int_{\mathbb{R}^d} \abs{\varphi(x)}^p\,dx \right)^{\frac{1}{p}}.
\]
    \end{enumerate}
\end{theorem}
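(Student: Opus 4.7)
Both parts are classical inequalities that appear in any harmonic analysis textbook (e.g.\ Stein's \emph{Singular Integrals and Differentiability Properties of Functions}), and my plan is to treat them in the standard way.

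For part (i), the plan is to pass through a common reference ball and bound the mean oscillation of $\varphi$ by a Riesz-type potential of $\abs{\nabla \varphi}$. Set $r := \abs{x-y}$ and let $B$ denote the ball of radius $2r$ centred at $x$, so that $x,y\in B$ and $\abs{B}\asymp r^d$. Writing $\varphi_B$ for the average of $\varphi$ on $B$, I split $\abs{\varphi(x)-\varphi(y)} \le \abs{\varphi(x)-\varphi_B} + \abs{\varphi(y)-\varphi_B}$. For each term I would use the identity obtained by writing $\varphi(x)-\varphi(w) = -\int_0^{\abs{x-w}} (\partial_\rho \varphi)(x+\rho\omega)\,d\rho$ along the segment from $x$ to $w$ (with $\omega = (w-x)/\abs{w-x}$), integrating in $w\in B$ and passing to polar coordinates; this yields the familiar estimate
\[
\abs{\varphi(x)-\varphi_B} \le C_d \int_{B}\frac{\abs{\nabla \varphi(z)}}{\abs{z-x}^{d-1}}\,dz.
\]
A dyadic decomposition of $B$ into shells $A_k := \{z: 2^{-k-1}(2r) < \abs{z-x} \le 2^{-k}(2r)\}$, $k\ge 0$, then gives $\int_{A_k}\abs{z-x}^{1-d}\abs{\nabla \varphi(z)}\,dz \lesssim 2^{-k}r\cdot \mathcal{M}\abs{\nabla \varphi}(x)$, and summing over $k$ produces the bound $C_d r \cdot \mathcal{M}\abs{\nabla\varphi}(x)$. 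The analogous estimate centred at $y$ works identically, completing (i).

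For part (ii), the plan is the standard Marcinkiewicz interpolation argument. The trivial endpoint is $\norm{\mathcal{M}\varphi}_\infty \le \norm{\varphi}_\infty$. Next I would establish the weak-type $(1,1)$ inequality
\[
\abs{\{x\in \mathbb{R}^d: \mathcal{M}\varphi(x)>\lambda\}} \le \frac{C_d}{\lambda}\norm{\varphi}_1, \qquad \lambda>0,
\]
by choosing for each point $x$ of the super-level set a ball $B_x$ witnessing $\frac{1}{\abs{B_x}}\int_{B_x}\abs{\varphi}>\lambda$, applying the Vitali $5r$-covering lemma to extract a countable disjoint sub-family $\{B_j\}$, and summing $\abs{B_j} \le \frac{1}{\lambda}\int_{B_j}\abs{\varphi}$. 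Marcinkiewicz interpolation between this weak-$(1,1)$ bound and the trivial $L^\infty$ bound then yields the strong-type $(p,p)$ inequality for every $1<p\le \infty$, with the expected constant $C_{d,p}$ blowing up as $p\downarrow 1$.

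The main obstacle in either part is merely bookkeeping: tracking the constants through the dyadic decomposition in (i), and verifying the geometric details of the Vitali covering in (ii). Neither argument involves ideas beyond what is standard in a first course on singular integrals, so I expect the proof to be essentially routine.
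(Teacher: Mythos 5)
The paper states this as a standard preliminary fact (it sits in Section~2 alongside the Sobolev embedding theorem and Hadamard's theorem) and gives no proof of its own, so there is nothing internal to compare against. Your reconstruction is the correct classical argument: for (i), the Riesz-potential estimate $\abs{\varphi(x)-\varphi_B}\lesssim \int_B \abs{\nabla\varphi(z)}\,\abs{z-x}^{1-d}\,dz$ followed by the dyadic-shell decomposition (each shell contributes $\lesssim 2^{-k}r\,\mathcal{M}\abs{\nabla\varphi}(x)$ and the geometric series closes the estimate), and for (ii), the Vitali-covering weak-$(1,1)$ bound interpolated against the trivial $L^\infty$ bound via Marcinkiewicz. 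Both steps are sound, and the only minor remark is that the paper's definition of $\mathcal{M}$ omits the absolute value inside the integral, so one implicitly applies the operator to $\abs{\varphi}$ in (ii) — which is exactly what your weak-type argument does anyway.
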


\section{Truncated SDEs}\label{sec3}
In this section, we will construct some precise cut-off functions to truncate SDEs \eqref{eq:b} and verify truncated SDEs
\begin{equation}\label{eq:newsde}
    \left\{\begin{aligned}
         & dX^R_t= b^R(X^R_t)\,dt + \sigma^R(X^R_t)d\widetilde{W}_t
        ,\  t\in [0,T],\\
          & X_0^R=x\in \mathbb{R}^d,
\end{aligned}\right.
\end{equation}
satisfy the following assumptions:
\begin{itemize}
    \item[$\mathbf{(H^{b^R})}$] There exist two positive constants $\beta$ and $\tilde{\beta}$ such that for all $R \ge 1$,
    \begin{equation}\label{eq:growth-inequality}
    \begin{aligned}
          \left(\int_{\mathbb{R}^d} \abs{b^R(x)}^{p_1} \,dx\right)^{\frac{1}{p_1}}\le \beta I_b(R)+\tilde{\beta},
    \end{aligned}
\end{equation}
where $p_1>d$ is a constant.
{\item[{ $(\bf{H}^{\sigma^R}_1)$}] There exist a positive constant $\tilde{\delta}\in(0,1)$ such that for all $x,\xi\in \mathbb{R}^d$,
\begin{equation}
    \tilde{\delta}^{\frac{1}{2}}\abs{\xi}\le \abs{(\sigma^R)^\top(x)\xi} \le \tilde{\delta}^{-\frac{1}{2}}\abs{\xi}, 
\end{equation}
and for all $x,y\in\mathbb{R}^d$,
\begin{equation}\label{holder_continous}
  \norm{\sigma^R(x)-\sigma^R(y)}\le \tilde{\delta}^{-\frac{1}{2}}\abs{x-y}^\varpi,  
\end{equation}
where $\tilde{\delta}$ is a constant only depend on $\delta$ and $d$. }

\item[{ $(\bf{H}_2^{\sigma^R})$}] There exist two positive constants $\beta$ and $\tilde{\beta}$ such that for all $R\ge 1$,
{\begin{equation}
    \left(\int_{\mathbb{R}^d} \norm{\nabla \sigma^R}^{p_1}\,dx\right)^\frac{1}{p_1}\le \Big(C(d,\delta,p_1)\ +(4\beta I_\sigma(3R)+4\tilde{\beta})\Big), 
\end{equation}}
where $p_1>d$ is a constant and $C(d,\delta,p_1)$ is a constant only depend on $d$, $\delta$ and $p_1$.
\end{itemize}
Let $\overbar{W}$ be an independent copy of the $d$-dimensional standard Wiener process $W$ and let
$$\widetilde{W}:=\begin{bmatrix}
    W\\ 
    \overbar{W}\\
\end{bmatrix}.$$
We can verify that $\widetilde{W}$ is a $2d$-dimensional standard  Wiener process. In SDEs \eqref{eq:newsde}, the coefficients $b^R$ and $\sigma^R$ are defined by
\[
b^R(x):=b(x)\mathds{1}_{\abs{x}\le R},\ \ \sigma^R(x):=[\rho_R \sigma, h_R\bar{\sigma}](x),
\]
where $\bar{\sigma}$ is a matrix defined by 
$$ \bar{\sigma}(x)\equiv \begin{pmatrix}
  \delta^{-\frac{1}{2}} & &\\
  &\ddots&\\
  & & \delta^{-\frac{1}{2}}
\end{pmatrix}_{d\times d}.$$
The cut-off function $h_R$ be defined by 
$$ h_R(x)=\begin{cases}
    0,\quad \abs{x}\le R,\\
    \frac{2}{R^2}({\abs{x}-R})^2,\quad R\le\abs{x}\le \frac{3R}{2},\\
   1-\frac{2}{R^2}(\abs{x}-2R)^2, \quad \frac{3R}{2}<\abs{x}\le 2R,\\
   1,\quad \abs{x}>2R.
  \end{cases} $$ 
It is easy to verify $h_R$ satisfy
$$ h_R(x)=\begin{cases}
  0,\quad \abs{x}\le R,\\
  \in (0,1)  \quad R<\abs{x}\le 2R,\\
  1 \quad \abs{x}>2R,
\end{cases} \quad  
 \abs{\nabla h_R}(x)=\begin{cases}
    0,\quad \abs{x}\le R,\\
    \le \frac{2}{R}  \quad R<\abs{x}\le 2R,\\
    0 \quad \abs{x}>2R.
\end{cases} $$ 
Similarly, we can construct a cut-off function $\rho_R$ satisfy
$$ \rho_R(x)=\begin{cases}
  1,\quad \abs{x}\le 2R,\\
  \in (0,1)  \quad 2R<\abs{x}\le 3R,\\
  0 \quad \abs{x}>3R,
\end{cases} \quad
 \abs{\nabla\rho_R}(x)=\begin{cases}
    0,\quad \abs{x}\le 2R,\\
   \le \frac{2}{R}  \quad 2R<\abs{x}\le 3R,\\
    0 \quad \abs{x}>3R.
\end{cases} $$
Clearly, $\mathbf{(H^{b^R})}$ hold by the definition of $b^R$. Notice that 
\[
\langle \sigma^R(\sigma^R)^\top\xi,\xi\rangle=\rho^2_R\langle \sigma\sigma^\top\xi,\xi\rangle+h^2_R\langle \bar{\sigma}\bar{\sigma}^\top\xi,\xi\rangle,
\]
by the definitions of $\rho_R$, $h_R$, $\bar{\sigma}$ and  assumption $(\bf{H}^\sigma_1)$, we have
\begin{equation}\label{sigmaR-nondegenerate}
   \frac{1}{2}\delta\abs{\xi}^2\le \langle \sigma^R(\sigma^R)^\top\xi,\xi\rangle\le 2\delta^{-1}\abs{\xi}^2, \ \ \forall\,\xi\in\mathbb{R}^d. 
\end{equation}
On the other hand, it is easy to see for all $x,y\in B(2R)\backslash B(R)$,
\begin{equation*}
    \begin{aligned}
         &\abs{h_R(x)-h_R(y)}\le \frac{2}{R}\abs{x-y}\le \frac{2}{R}(4R)^{1-\varpi}\abs{x-y}^\varpi\le 8\abs{x-y}^\varpi,\ \forall\, R\ge 1,\\
    \end{aligned}
\end{equation*}
and for all $x,y\notin B(2R)\backslash B(R)$, we have $\abs{h_R(x)-h_R(y)}\le \abs{x-y}^\varpi,\ \forall\,R\ge 1$. Hence, for all $x,y\in \mathbb{R}^d$, we obtain
\begin{equation}\label{hR-holder}
    \abs{h_R(x)-h_R(y)}\le 8\abs{x-y}^\varpi,\ \forall\,R\ge 1.
\end{equation}
Similarly, we can obtain 
\begin{equation}\label{rhoR-holder}
  \abs{\rho_R(x)-\rho_R(y)}\le 12\abs{x-y}^\varpi,\ \forall\,R\ge 1.  
\end{equation}
Therefore, we have
{\begin{equation}\label{sigmaR-holder}
  \begin{aligned}
     & \norm{\sigma^R(x)-\sigma^R(y)}\\
     \le & \abs{\rho_R(x)-\rho_R(y)}\norm{\sigma(x)}+\abs{\rho_R(y)}\norm{\sigma(x)-\sigma(y)}+\norm{\bar{\sigma}}\abs{h_R(x)-h_R(y)}\\
     \le & \left( 12d\cdot \delta^{-\frac{1}{2}}d^{\frac{1}{2}}+\delta^{-\frac{1}{2}}+12\delta^{-\frac{1}{2}}d^\frac{1}{2} \right)\abs{x-y}^\varpi,
  \end{aligned}
\end{equation}}
where  the last inequality is due to \eqref{hR-holder} and \eqref{rhoR-holder}. Combining \eqref{sigmaR-nondegenerate} with \eqref{sigmaR-holder}, we verified the  $(\bf{H}^{\sigma^R}_1)$. \\
By the definition $\sigma^R=[\rho_R \sigma,h_R\bar{\sigma}]$ and direct computation, we obtain
\begin{equation*}
  \begin{aligned}
     &\int_{\mathbb{R}^d} \norm{\nabla \sigma^R}^{p_1}\,dx= \int_{\mathbb{R}^d} \norm{\nabla [\rho_R\,\sigma,h_R\,\bar{\sigma}]}^{p_1}\,dx\\
     =&\int_{\mathbb{R}^d} \norm{[\nabla \rho_R(x)\,\sigma(x)+\rho_R(x)\,\nabla \sigma(x), \nabla h_R(x)\,\bar\sigma(x)+h_R(x)\,\nabla \bar\sigma(x)]}^{p_1}\,dx\\
     \le &\,4^{p_1} \Bigg\{\int_{B(3R)\backslash B(2R)} \norm{\nabla \rho_R(x)\sigma(x)}^{p_1}\,dx + \int_{B(2R)\backslash B(R)} \norm{\nabla h_R(x)\bar\sigma(x)}^{p_1}\,dx\\
     &\quad\quad\ + \int_{B(3R)}\norm{\nabla \sigma}^{p_1}\,dx \Bigg\}\\
     :=\, & 4^{p_1}\,(J_1+J_2+J_3).
  \end{aligned}
\end{equation*}
Note that $\abs{\nabla \rho_R}\le \frac{2}{R}$ in $B(3R)\backslash B(2R)$, $\abs{\nabla h_R}\le \frac{2}{R}$ in $B(2R)\backslash B(R)$  and $(\bf{H}_2^\sigma)$, there exist a constant $C(d,\delta,p_1)$ only depend on $d$, $\delta$ and $p_1$ such that for all $R\ge 1$, 
\begin{equation*}
    \begin{aligned}
         &  J_1\le  \int_{B(3R)\backslash B(2R)} C(d)\left(\frac{1}{R}\delta^{-\frac{1}{2}}d^{\frac{1}{2}}\right)^{p_1}\,dx\le  C(d,\delta,p_1)R^{d-p_1}\le C(d,\delta,p_1),\\ 
         &   J_2 \le \int_{B(2R)\backslash B(R)} C(d) (\frac{1}{R}\delta^{-\frac{1}{2}})^{p_1}\,dx\le  C(d,\delta,p_1)R^{d-p_1}\le C(d,\delta,p_1),\\ 
         & J_3\le \int_{B(3R)}\norm{\nabla \sigma(x)}^{p_1}\,dx\le (\beta I_\sigma(3R)+\tilde{\beta})^{p_1}.
  \end{aligned}
\end{equation*}
Together, $J_1$, $J_2$ and $J_3$ imply $(\bf{H}_2^{\sigma^R})$.

\section{Krylov's estimate and Khasminskii's estimate}\label{sec4}
In this section, we shall prove Krylov's estimate and Khasminskii's estimate. We need the following result about elliptic PDEs \eqref{studyed-pde}. 

\begin{theorem}\label{studied-pde-theorem}
    Suppose $\sigma^R$ satisfies { $(\bf{H}_1^{\sigma^R})$}, $p\in (1,\infty)$, then for any $f\in L^p(\mathbb{R}^d)$, there exists a unique $u\in W^{2,p}(\mathbb{R}^d)$ such that
    \begin{equation}\label{studyed-pde}
        L^{\sigma^R(x)}u - \lambda u = f,
    \end{equation}
    where 
    $$
    L^{\sigma^R(x)} u(x):= \frac{1}{2}\sum^{d}_{i,j,k=1}(\sigma^R)_{ik}(x)(\sigma^R)_{jk}(x)\partial_i \partial_j u(x)
    $$ 
    and $\lambda >C$ \ $(C=C(d,\varpi,\tilde{\delta},p)\ge 2$ is a constant $)$.
Furthermore, for a $C_1=C_1(d,\varpi ,\tilde{\delta},p)>0$, 
    \begin{equation}\label{krylov-lemma}
        \norm{u}_{2,p}\le C_1\norm{f}_p.
    \end{equation}
    Moreover, for  any $\alpha\in [0,2)$ and $p'\in [1,\infty]$ with $\frac{d}{p}<2-\alpha+\frac{d}{p'}$, 
\begin{equation}\label{para-space}
    \norm{u}_{\alpha,p'}\le C_2\, \lambda^{(\alpha-2+\frac{d}{p}-\frac{d}{p'})/2}\norm{f}_p,
\end{equation}
    where $C_1(d,\varpi,\tilde{\delta},p)$ and $C_2(d,\varpi ,\tilde{\delta},p,\alpha,p')>0$  are  both independent of $\lambda$.
\end{theorem}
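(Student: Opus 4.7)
The plan is to follow the classical freezing-of-coefficients method combined with the resolvent representation via the associated heat semigroup, extracting the precise $\lambda$-dependence in both estimates; existence and uniqueness then fall out of the a priori bound \eqref{krylov-lemma} by the continuity method. \emph{Step 1 (constant-coefficient estimate).} First I would fix $x_0\in\mathbb{R}^d$ and treat the constant-coefficient operator $L^{\sigma^R(x_0)}-\lambda$. By $(\mathbf{H}_1^{\sigma^R})$ the matrix $\sigma^R(\sigma^R)^{\top}(x_0)$ has eigenvalues in $[\tilde{\delta},\tilde{\delta}^{-1}]$, so an affine change of variables with Jacobian depending only on $(d,\tilde{\delta})$ reduces the operator to $\tfrac12\Delta-\lambda$ on $\mathbb{R}^d$. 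Standard Calder\'on--Zygmund theory (equivalently the Fourier multiplier bound for $\xi_i\xi_j/(\abs{\xi}^2+2\lambda)$) then yields $\norm{v}_{2,p}\le C_0(d,\tilde{\delta},p)\,\norm{g}_p$ for the constant-coefficient equation $L^{\sigma^R(x_0)}v-\lambda v=g$, uniformly in $\lambda\ge 1$ and $x_0$.

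\emph{Step 2 (freezing and partition of unity).} Next I would take a smooth partition of unity $\{\chi_k^2\}$ subordinate to balls $B(x_k,\eta)$ of small radius $\eta$ with bounded overlap; by \eqref{holder_continous}, on each such ball $\norm{\sigma^R(\sigma^R)^{\top}(x)-\sigma^R(\sigma^R)^{\top}(x_k)}\le C(\tilde{\delta})\eta^{\varpi}$. Writing the equation as $L^{\sigma^R(x_k)}(\chi_k u)-\lambda(\chi_k u)=\chi_k f+[L^{\sigma^R(x_k)},\chi_k]u+(L^{\sigma^R(x_k)}-L^{\sigma^R})(\chi_k u)$, applying Step~1, raising to the $p$-th power and summing over $k$ gives
\[
\norm{u}_{2,p}^p \le C_0^p\norm{f}_p^p + C_0^p\,C(\tilde{\delta})^p\eta^{\varpi p}\norm{u}_{2,p}^p + C_\eta\bigl(\norm{\nabla u}_p^p+\norm{u}_p^p\bigr).
\]
Choosing $\eta$ small (depending only on $d,\varpi,\tilde{\delta},p$) to absorb the second term, then taking $\lambda$ large enough and using the interpolation $\norm{\nabla u}_p\le\varepsilon\norm{u}_{2,p}+C_\varepsilon\lambda^{1/2}\norm{u}_p$ together with the elementary maximum-principle bound $\norm{u}_p\lesssim \lambda^{-1}\norm{f}_p$, one obtains \eqref{krylov-lemma} with $C_1=C_1(d,\varpi,\tilde{\delta},p)$ independent of $\lambda$.

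\emph{Step 3 (Bessel estimate).} For \eqref{para-space} I would use the resolvent identity $u=\int_0^\infty e^{-\lambda t}P_t f\,dt$, where $\{P_t\}$ is the semigroup generated by $L^{\sigma^R}$. Uniform ellipticity and global H\"older continuity of $\sigma^R$ permit the classical construction of Ladyzhenskaya--Solonnikov--Ural'tseva (Chap.~IV of \cite{ladyzenskaja1968}), producing a fundamental solution with Gaussian-type pointwise bounds $\abs{\nabla^j p_t(x,y)}\le C_j(\tilde{\delta})\,t^{-(d+j)/2}\exp(-k_j(\tilde{\delta})\abs{x-y}^2/t)$ for $j=0,1,2$; interpolation then yields $\norm{(I-\Delta)^{\alpha/2}P_t f}_{p'}\le C\,t^{-\gamma}\norm{f}_p$ with $\gamma:=\alpha/2+d/(2p)-d/(2p')$. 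The hypothesis $d/p<2-\alpha+d/p'$ is exactly $\gamma<1$, so substituting and evaluating $\int_0^\infty e^{-\lambda t}t^{-\gamma}\,dt=\Gamma(1-\gamma)\lambda^{\gamma-1}$ produces \eqref{para-space} with exponent $(\alpha-2+d/p-d/p')/2$.

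\emph{Main obstacle.} The crucial technical point is that every constant above must depend only on $(d,\varpi,\tilde{\delta},p,\alpha,p')$ and not on $R$ or on the particular $\sigma^R$: this hinges on the H\"older constant in \eqref{holder_continous} being $\tilde{\delta}^{-1/2}$, which under $(\mathbf{H}_1^{\sigma^R})$ is $R$-independent. It is exactly this uniformity that the subsequent $R$-dependent SDE arguments rely on, and the delicate part of Step~2 is carrying the freezing argument through so as to preserve this uniformity rather than leaking implicit dependence on the support or structure of $\sigma^R$.
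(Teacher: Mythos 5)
Your proposal is correct and follows essentially the same route as the paper: a constant-coefficient estimate, a freezing-of-coefficients argument to obtain \eqref{krylov-lemma}, and the Laplace-transform representation of the resolvent via the heat kernel with Gaussian bounds to obtain \eqref{para-space} (the paper uses a continuous averaging $\int_{\mathbb{R}^d}\,dz$ over scaled cutoffs where you use a discrete partition of unity, and it obtains existence by mollification and weak compactness where you invoke the continuity method, but these are interchangeable variants). One small point: the bound $\norm{u}_p\lesssim \lambda^{-1}\norm{f}_p$ is not an $L^p$ maximum principle; in the paper it is deduced from the equation $\lambda u = L^{\sigma^R}u - f$ together with the already-established bound on $\norm{u_{xx}}_p$, and then absorbed using $\lambda\ge C$ large — you should close that step the same way rather than treat it as an independent input.
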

We believe that Theorem \ref{studied-pde-theorem} is standard although we do not find them in any reference. In \cite{zhang2018}, authors proved Theorem \ref{studied-pde-theorem} hold true when $\sigma^R\equiv I$. 
For convenience of the reader, we combine  \cite{zhang2018} with \cite{zhang2016} to give a detailed proof in Appendix.

{In order to prove Krylov's estimate and Khasminskii's estimate, we need to solve the following elliptic equation:
\begin{equation}\label{eq:R-PDE}
    \begin{aligned}
         & (L^{\sigma^R(x)}-\lambda) u^R +b^R\cdot\nabla u^R = f, \quad \lambda\ge  \lambda^{b^R},
    \end{aligned}
\end{equation} 
where $f\in L^p(\mathbb{R}^d)$ and $\lambda^{b^R}>1$ is a constant depend on $C_2,d,p_1$ and $\norm{b^R}_{p_1}$.

\begin{lemma}
    If $\norm{b^R}_{p_1}<\infty$ and $(\bf{H}_1^{\sigma^R})$ hold, then for any $p\in (\frac{d}{2}\vee 1,p_1]$, we can find a constant 
\begin{equation}
    \begin{aligned}
         & \lambda^{b^R}=\left(2C_2\norm{b^R}_{p_1}\right)^{2(1-\frac{d}{p_1})^{-1}}
    \end{aligned}
\end{equation}
    such that for any $f\in L^p(\mathbb{R}^d)$, there exists a unique solution $u^R\in W^{2,p}(\mathbb{R}^d)$ to equation \eqref{eq:R-PDE} and
    \begin{equation}
        \begin{aligned}
             \norm{u^R}_{2,p}\le 2C_1\norm{f}_p, \ \ 
             \lambda^{(2-\alpha+\frac{d}{p'}-\frac{d}{p})/2}\norm{u^R}_{\alpha,p'}\le 2C_2 \norm{f}_p\ (\lambda\ge \lambda^{b^R}),
        \end{aligned}
    \end{equation}
    where $C_1$ and $C_2$ are two constants in Theorem \ref{studied-pde-theorem},  $\alpha\in [0,2)$ and $p'\in[1,\infty]$ with $(2-\alpha + \frac{d}{p'}-\frac{d}{p})>0$.
\end{lemma}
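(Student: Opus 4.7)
The plan is to treat $b^R\cdot\nabla u^R$ as a perturbation of the equation $(L^{\sigma^R}-\lambda)u=f$ already covered by Theorem \ref{studied-pde-theorem}, and to absorb it into the left-hand side through the decay factor $\lambda^{(d/p_1-1)/2}$ supplied by \eqref{para-space} as soon as $\lambda$ is large enough. Introduce the auxiliary exponent $q\in[p,\infty]$ defined by $\tfrac{1}{p}=\tfrac{1}{q}+\tfrac{1}{p_1}$, so that H\"older's inequality gives $\norm{b^R\cdot\nabla v}_p\le \norm{b^R}_{p_1}\norm{\nabla v}_q$ for any sufficiently regular $v$. The admissibility hypothesis $\tfrac{d}{p}<2-\alpha+\tfrac{d}{p'}$ of \eqref{para-space}, taken at $(\alpha,p')=(1,q)$, reduces to $\tfrac{d}{p_1}<1$, which is ensured by $p_1>d$. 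Consequently, applied to any $W^{2,p}$-solution $u^R$ of \eqref{eq:R-PDE} with forcing $f-b^R\cdot\nabla u^R$, \eqref{para-space} yields
\[
\norm{\nabla u^R}_q\le\norm{u^R}_{1,q}\le C_2\lambda^{(d/p_1-1)/2}\bigl(\norm{f}_p+\norm{b^R}_{p_1}\norm{\nabla u^R}_q\bigr).
\]
The definition $\lambda^{b^R}=(2C_2\norm{b^R}_{p_1})^{2(1-d/p_1)^{-1}}$ is tuned exactly so that $\lambda\ge\lambda^{b^R}$ is equivalent to $C_2\lambda^{(d/p_1-1)/2}\norm{b^R}_{p_1}\le \tfrac{1}{2}$. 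Absorption then gives $\norm{\nabla u^R}_q\le 2C_2\lambda^{(d/p_1-1)/2}\norm{f}_p$ and hence $\norm{b^R\cdot\nabla u^R}_p\le\norm{f}_p$. Feeding this back into \eqref{krylov-lemma} and \eqref{para-space} applied with the forcing $f-b^R\cdot\nabla u^R$ instantly produces $\norm{u^R}_{2,p}\le 2C_1\norm{f}_p$ and $\lambda^{(2-\alpha+d/p'-d/p)/2}\norm{u^R}_{\alpha,p'}\le 2C_2\norm{f}_p$ for the admissible range of $(\alpha,p')$ stated in the lemma.

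For existence and uniqueness I would run Banach's fixed-point theorem on the operator $\mathcal{T}$ that sends $v\in W^{2,p}(\mathbb{R}^d)$ to the unique $W^{2,p}$-solution of $(L^{\sigma^R}-\lambda)w=f-b^R\cdot\nabla v$ delivered by Theorem \ref{studied-pde-theorem}; the Sobolev embedding $W^{2,p}\hookrightarrow W^{1,q}$, which again boils down to $p_1>d$, ensures $b^R\cdot\nabla v\in L^p$ so that $\mathcal{T}$ is well-defined. Applied to the difference $w=\mathcal{T}(v_1)-\mathcal{T}(v_2)$, which solves $(L^{\sigma^R}-\lambda)w=-b^R\cdot\nabla(v_1-v_2)$, estimate \eqref{para-space} with $(\alpha,p')=(1,q)$ and the standing condition $\lambda\ge\lambda^{b^R}$ give
\[
\norm{w}_{1,q}\le C_2\lambda^{(d/p_1-1)/2}\norm{b^R}_{p_1}\norm{v_1-v_2}_{1,q}\le \tfrac{1}{2}\norm{v_1-v_2}_{1,q}.
\]
Thus $\mathcal{T}$ is a strict contraction in the $\norm{\cdot}_{1,q}$-norm; its unique fixed point $u^R\in W^{2,p}$ solves \eqref{eq:R-PDE} and satisfies the a priori bounds derived in the previous paragraph. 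Uniqueness inside $W^{2,p}$ follows by applying the same contraction estimate to the difference of two putative solutions.

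The only genuine obstacle is bookkeeping. One must verify that $q=pp_1/(p_1-p)\in[p,\infty]$ (which uses $p\le p_1$), that this choice lies in the admissibility range of Theorem \ref{studied-pde-theorem} (which uses $p_1>d$), and that the absorption threshold $C_2\lambda^{(d/p_1-1)/2}\norm{b^R}_{p_1}\le\tfrac{1}{2}$ translates precisely into $\lambda\ge(2C_2\norm{b^R}_{p_1})^{2(1-d/p_1)^{-1}}$. The condition $p>\tfrac{d}{2}\vee 1$ is invoked solely to ensure $p>1$, so that Theorem \ref{studied-pde-theorem} is applicable; the endpoint $q=\infty$ occurring when $p=p_1$ is permitted by the statement of \eqref{para-space}.
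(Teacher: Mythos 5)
Your proof is correct and follows essentially the same route as the paper: the paper sets up the Picard iteration $u_0=0$, $u^R_n=(L^{\sigma^R}-\lambda)^{-1}(f-b^R\cdot\nabla u^R_{n-1})$ and absorbs the drift term using $C_2\lambda^{(d/p_1-1)/2}\norm{b^R}_{p_1}\le\frac12$, which is exactly the contraction you phrase via Banach's fixed point in the $W^{1,q}$-norm (with $1/p=1/q+1/p_1$). Your bookkeeping of the admissibility condition $(\alpha,p')=(1,q)\Rightarrow d/p_1<1$, the case $q=\infty$ when $p=p_1$, and the feedback step producing the $2C_1$ and $2C_2$ constants all match the intended argument (which the paper delegates to Zhang--Zhao's Theorem 3.3(ii) with $(\Delta-\lambda)^{-1}$ replaced by $(L^{\sigma^R}-\lambda)^{-1}$).
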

\begin{proof}
    By Theorem \ref{studied-pde-theorem}, for any $\tilde{f}\in L^p(\mathbb{R}^d)$, we have
    \begin{equation}\label{eq:no-b^R}
        \begin{aligned}
             \norm{(\lambda-L^{\sigma^R(x)})^{-1} \tilde{f}}_{2,p}\le C_1 \norm{\tilde{f}}_p,\quad \lambda^{(2-\alpha + \frac{d}{p'}-\frac{d}{p})/2}\norm{(\lambda-L^{\sigma^R(x)})^{-1} \tilde{f}}_{\alpha,p'}\le C_2 \norm{\tilde{f}}_p,
        \end{aligned}
    \end{equation}
where $\lambda >C\ \ (C>2)$, $(2-\alpha + \frac{d}{p'}-\frac{d}{p})>0$ and $C_1, C_2$ do not depend on $\lambda$. \\
Since 
$\lambda^{b^R}=(2C_2\norm{b^R}_{p_1})^{2p_1/(p_1-d)}$, it is easy to see for any $\lambda \ge \lambda^{b^R}$,
$$C_2\lambda^{(\frac{d}{p_1}-1)/2}\norm{b^R}_{p_1}\le \frac{1}{2}.$$ 
Let $u_0=0$ and for $n\in \mathbb{N}$ define
\[
u^R_n:= ( L^{\sigma^R(x)} -\lambda)^{-1}(f-b^R\cdot \nabla u^R_{n-1}).    
\]
By  \eqref{eq:no-b^R} and replace  $(\Delta-\lambda)^{-1}$ with $( L^{\sigma^R(x)} -\lambda)^{-1}$ in the proof of \cite[Theorem $3.3$ ${(ii)}$]{zhang2018},
we completed the proof.
\end{proof}

Now, we provide the main result of this section. 
\begin{theorem}
    If $\norm{b^R}_{p_1}<\infty$ and $\mathbf{(H^{\sigma^R}_1)}$ hold and $\{X_s^R\}_{s\in[0,T]}$ is a solution of SDE \eqref{eq:newsde}, then for any $0\le t_0<t_1\le T$, $f\in L^p(\mathbb{R}^d)$ \ $(p>\frac{d}{2}\vee 1)$, we have
\begin{equation}\label{lambda-b-R}
    \mathbb{E}^{\mathscr{F}_{t_0}}\left[ \int^{t_1}_{t_0} f(X^R_s(x))\,ds \right]\le 4C_2\left([T\lambda^{b^R}]^\frac{d}{2p}+[T\lambda^{b^R}]^{\frac{d}{2p}-1}\right)(t_1-t_0)^{1-\frac{d}{2p}}\norm{f}_p,
\end{equation}
where $C_2$ is the constant in Theorem \ref{studied-pde-theorem},  $\lambda^{b^R}=(2C_2\norm{b^R}_{p_1})^{2p_1/(p_1-d)}$.
Moreover, for any $a>0$  we have
    \begin{equation}
        \begin{aligned}
             \mathbb{E}\left[ \exp\left( a\int^T_0 \abs{f(X^R_s(x))}\,ds \right) \right]\le           
             e\cdot\exp\left(T\left[ \frac{4a C_2 \left([T\lambda^{b^R}]^{\frac{d}{2p}}+[T\lambda^{b^R}]^{\frac{d}{2p}-1}\right)\norm{f}_p}{1-e^{-1}} \right]^{(1-\frac{d}{2p})^{-1}}\right).
        \end{aligned}
    \end{equation}
\end{theorem}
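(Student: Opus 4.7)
The plan for the Krylov estimate is to employ the elliptic solution $u^R$ from the previous lemma as an auxiliary function. For $\lambda \ge \lambda^{b^R}$ to be chosen, let $u^R \in W^{2,p}(\mathbb{R}^d)$ solve $(L^{\sigma^R(x)} - \lambda)u^R + b^R \cdot \nabla u^R = \abs{f}$, equivalently $L^{\sigma^R(x)} u^R + b^R \cdot \nabla u^R = \abs{f} + \lambda u^R$. Applying It\^o's formula to $u^R(X^R_\cdot)$, which is justified by mollifying $u^R$ to $u^R_n \in C^\infty$ with $u^R_n \to u^R$ in $W^{2,p}$, applying the classical It\^o formula to $u^R_n(X^R_\cdot)$, and passing to the limit via the non-degeneracy of $\sigma^R$ together with a preliminary Krylov-type bound for smooth test functions, yields
\[
u^R(X^R_{t_1}) - u^R(X^R_{t_0}) = \int_{t_0}^{t_1}(\abs{f} + \lambda u^R)(X^R_s)\,ds + M_{t_1} - M_{t_0},
\]
where $M$ is a local martingale. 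Taking conditional expectation on $\mathscr{F}_{t_0}$ kills the martingale term, and rearranging produces $\mathbb{E}^{\mathscr{F}_{t_0}}[\int_{t_0}^{t_1}\abs{f}(X^R_s)\,ds] \le (2 + \lambda(t_1 - t_0))\norm{u^R}_\infty$.

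Since $p > d/2 \vee 1$, the estimate \eqref{para-space} with $\alpha = 0$ and $p' = \infty$ gives $\norm{u^R}_\infty \le 2C_2 \lambda^{(d/p - 2)/2}\norm{f}_p = 2C_2\lambda^{d/(2p)-1}\norm{f}_p$. Writing $\tau = t_1 - t_0$ and choosing $\lambda = T\lambda^{b^R}/\tau$, which satisfies $\lambda \ge \lambda^{b^R}$ since $T \ge \tau$, we get $\lambda\tau = T\lambda^{b^R}$ and $\lambda^{d/(2p)-1} = [T\lambda^{b^R}]^{d/(2p)-1}\tau^{1-d/(2p)}$, hence
\[
\mathbb{E}^{\mathscr{F}_{t_0}}\left[\int_{t_0}^{t_1}\abs{f}(X^R_s)\,ds\right] \le 2C_2 \bigl(2[T\lambda^{b^R}]^{d/(2p)-1} + [T\lambda^{b^R}]^{d/(2p)}\bigr)\tau^{1-d/(2p)}\norm{f}_p,
\]
which is dominated by the claimed bound \eqref{lambda-b-R} (and implies it for general $f$, not just $\abs{f}$).

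The Khasminskii-type estimate follows by a standard iteration. Setting $M_0 := 4C_2([T\lambda^{b^R}]^{d/(2p)} + [T\lambda^{b^R}]^{d/(2p)-1})\norm{f}_p$ and partitioning $[0,T]$ into $N := \lceil T[aM_0/(1 - e^{-1})]^{1/(1-d/(2p))}\rceil$ equal subintervals, we arrange $aM_0(T/N)^{1-d/(2p)} \le 1 - e^{-1}$ on each piece. The classical Khasminskii lemma, namely that if $\mathbb{E}^{\mathscr{F}_s}[A_t - A_s] \le c$ for all $s < t$ in a subinterval with $ac \le 1 - e^{-1}$, then a Taylor expansion of the exponential and iteration of the Krylov bound yield $\mathbb{E}^{\mathscr{F}_s}[e^{a(A_t - A_s)}] \le (1 - ac)^{-1} \le e$, combined with the tower property across the $N$ subintervals, gives
\[
\mathbb{E}\left[\exp\Bigl(a\int_0^T\abs{f}(X^R_s)\,ds\Bigr)\right] \le e^N \le e\cdot\exp\bigl(T[aM_0/(1-e^{-1})]^{1/(1-d/(2p))}\bigr),
\]
as required. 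The main obstacle in the argument is the rigorous justification of It\^o's formula for the merely $W^{2,p}$ function $u^R$; because $\sigma^R$ is bounded, H\"older continuous, uniformly non-degenerate and $b^R \in L^{p_1}$ with $p_1 > d$, this is classical and can be closed by a Girsanov transformation that removes $b^R$ and reduces to the standard Krylov estimate for non-degenerate martingale diffusions.
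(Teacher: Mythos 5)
Your proof is correct and follows essentially the same route as the paper: solve the elliptic resolvent equation $(L^{\sigma^R}-\lambda)u^R + b^R\cdot\nabla u^R = |f|$ from Lemma \ref{lemma-u^R-estimate}, apply (a justified version of) It\^o's formula to $u^R(X^R_\cdot)$, take conditional expectation to kill the martingale, invoke the $L^\infty$-bound $\norm{u^R}_\infty\lesssim C_2\lambda^{d/(2p)-1}\norm{f}_p$ from \eqref{para-space}, optimize by choosing $\lambda=T\lambda^{b^R}/(t_1-t_0)\ge\lambda^{b^R}$, and then bootstrap to the exponential bound by Khasminskii's iteration over $O(T[aM_0/(1-e^{-1})]^{1/(1-d/(2p))})$ subintervals with the tower property. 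The paper compresses Steps (i)--(iii) by citing the corresponding arguments in Zhang--Zhao (Theorem 3.4 and Corollary 3.5 of \cite{zhang2018}) with $(\Delta-\lambda)^{-1}$ replaced by $(L^{\sigma^R(x)}-\lambda)^{-1}$; you reproduce the content of those citations, including the justification of It\^o's formula for $W^{2,p}$ functions via mollification together with a preliminary Krylov-type bound (or, as you note, a Girsanov reduction). Your bookkeeping of the constants and exponents reproduces the stated bounds.
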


\begin{proof}
The proof be divided into three steps.

    {\bf Step (i)}  We replace  $(\Delta-\lambda)^{-1}$ with $( L^{\sigma^R(x)} -\lambda)^{-1}$ in the proof of Theorem $3.4$ of Zhang and Zhao \cite{zhang2018}. Notice that 
 $$  \lambda^{b^R}=\left(2C_2\norm{b^R}_{p_1}\right)^{2(1-\frac{d}{p_1})^{-1}}$$
is enough to ensure $C_2\lambda^{(d/p_1-1)/2}\norm{b^R}_{p_1}\le \frac{1}{2}$ for all $\lambda \ge \lambda^{b^R}$.  Repeating the proof of Theorem $3.4$ (ii) of Zhang and Zhao [25],   for all $\tilde{\lambda}\ge \lambda^{b^R}$, we obtain
    \begin{equation}\label{Krylov-estimate}
        \begin{aligned}
              \mathbb{E}^{\mathscr{F}_{t_0}}\left[ \int^{t_1}_{t_0}f(X^R_s(x))\,ds \right]&\le \tilde{\lambda} (t_1-t_0)\norm{u^R}_\infty+ 2\norm{u^R}_\infty\\
              &\le 2C_2(t_1-t_0)\tilde{\lambda}^{\frac{d}{2p}}\norm{f}_p + 4C_2\tilde{\lambda}^{(\frac{d}{2p}-1)}\norm{f}_p.\\
        \end{aligned}
    \end{equation}
Let  $\kappa=T\lambda^{b^R}$ and $\tilde{\lambda}=\kappa(t_1-t_0)^{-1}$. Due to $0\le t_0<t_1\le T$, we have $\tilde{\lambda}\ge \lambda^{b^R}$. Taking $\tilde{\lambda}=\kappa(t_1-t_0)^{-1}$ into \eqref{Krylov-estimate} we proved the Krylov's estimate \eqref{lambda-b-R}.         

{\bf Step (ii)} Taking $0\le t_0< t_1<\infty$ satisfy
\begin{equation}\label{t1-t0}
    t_1-t_0=\left( \frac{1-e^{-1}}{4aC_2(\kappa^{\frac{d}{2p}}+\kappa^{\frac{d}{2p}-1})\norm{f}_p} \right)^{(1-\frac{d}{2p})^{-1}}.
\end{equation}
If $t_1-t_0\le T$ in \eqref{t1-t0}, by the Corollary 3.5 in Zhang and Zhao \cite{zhang2018}, we have
\begin{equation}
        \begin{aligned}
             \mathbb{E}^{\mathscr{F}_{t_0}}\left[ \left(\int^{t_1}_{t_0}\abs{f(X^R_s(x))}\,ds\right)^n \right]\le n!\left(\frac{1-e^{-1}}{a}\right)^n.
        \end{aligned}
    \end{equation}
Since $e^x=\sum^\infty_{n=0} \frac{1}{n!} x^n$, we have
    \begin{equation}\label{eq:exp-short-time}
        \begin{aligned}
             & \mathbb{E}^{\mathscr{F}_{t_0}}\left[ \exp\left\{ a\int^{t_1}_{t_0}\abs{f(X^R_s(x))}\,ds \right\}\right]\\
             =&\mathbb{E}^{\mathscr{F}_{t_0}}\left[\sum^\infty_{n=0}\frac{1}{n!}\left(a\int^{t_1}_{t_0}\abs{f(X^R_s(x))}\,ds \right)^n  \right]\\
             =&\sum^\infty_{n=0}\frac{1}{n!}\mathbb{E}^{\mathscr{F}_{t_0}}\left[\left(a\int^{t_1}_{t_0}\abs{f(X^R_s(x))}\,ds \right)^n \right]\\
             \le &\sum^\infty_{n=0} (1-e^{-1})^n=e.
        \end{aligned}
    \end{equation}

   {\bf Step (iii)} Finally, by virtual of the estimate \eqref{eq:exp-short-time}, we obtain
   \begin{equation}
    \begin{aligned}
         & \mathbb{E}\left[ \exp\left\{ a\int^{T}_{0}\abs{f(X^R_s(x))}\,ds \right\}\right]\\
         \le&\mathbb{E}\left[ \exp\left\{ a\sum^{\left\lfloor M\right\rfloor+1 }_{i=1}\int^{t_i}_{t_{i-1}}\abs{f(X^R_s(x))}\,ds  \right\}\right]\\
         =&\mathbb{E}\left[\prod^{\left\lfloor M\right\rfloor+1 }_{i=1} \exp\left\{ a\int^{t_i}_{t_{i-1}}\abs{f(X^R_s(x))}\,ds  \right\}\right]\\
         =&\mathbb{E}\left[\prod^{\left\lfloor M\right\rfloor}_{i=1} \exp\left\{ a\int^{t_i}_{t_{i-1}}\abs{f(X^R_s(x))}\,ds  \right\}\mathbb{E}^{\mathscr{F}_{t_{{\left\lfloor M\right\rfloor}}}}\left[ \exp\left\{ a\int^{t_{\left\lfloor M\right\rfloor+1}}_{t_{{\left\lfloor M\right\rfloor}}}\abs{f(X^R_s(x))}\,ds  \right\} \right]\right]\\
         \le &e\cdot\mathbb{E}\left[\prod^{\left\lfloor M\right\rfloor}_{i=1} \exp\left\{ a\int^{t_i}_{t_{i-1}}\abs{f(X^R_s(x))}\,ds  \right\}\right]\le e^{M+1},
    \end{aligned}
\end{equation} 
where $M=\frac{T}{t_1-t_0}$ and $0\le t_0<t_1<\cdots<t_{\left\lfloor M\right\rfloor+1}=T$ satisfies $t_0-0\le t_1-t_0$,  $t_i-t_{i-1}=t_1-t_0\ (i=1,\cdots,\left\lfloor M\right\rfloor+1)$.  \\
If $t_1-t_0>T$ in \eqref{t1-t0}, it is obvious that
$$
\mathbb{E}\left[\int^T_0 f(X^R_s(x)\,ds)\right]\le \frac{1-e^{-1}}{a},
$$
by a similar argument, we have 
\[
\mathbb{E}\left[ \exp\left\{ a\int^{T}_{0}\abs{f(X^R_s(x))}\,ds \right\}\right]\le e.
\]
We completed the proof.
\end{proof}
}

In particular, in the proofs of Lemma \ref{lemma-u^R-estimate} and Theorem \ref{th:exp-bounded}, replacing  $\lambda^{b^R}$ with  $\lambda^R=\big(4C^2_2(\beta I_b(R)+\tilde{\beta})^2\big)^{p_1/(p_1-d)}$, we can obtain the following lemma and theorem:

\begin{lemma}\label{lemma-u^R-estimate}
    If $\mathbf{(H^{b^R})}$ and $(\bf{H}_1^{\sigma^R})$ hold, then for any $p\in (\frac{d}{2}\vee 1,p_1]$, we can find a constant 
\begin{equation}\label{definition_of_lambda^R}
    \begin{aligned}
         & \lambda^R=\big(4C^2_2(\beta I_b(R)+\tilde{\beta})^2\big)^{(1-\frac{d}{p_1})^{-1}}
    \end{aligned}
\end{equation}
    such that for any $f\in L^p(\mathbb{R}^d)$, there exists a unique solution $u^R\in W^{2,p}(\mathbb{R}^d)$ to equation \eqref{eq:R-PDE} and
    \begin{equation}\label{lamada-estimate}
        \begin{aligned}
             \norm{u^R}_{2,p}\le 2C_1\norm{f}_p, \ \ 
             \lambda^{(2-\alpha+\frac{d}{p'}-\frac{d}{p})/2}\norm{u^R}_{\alpha,p'}\le 2C_2 \norm{f}_p\ (\lambda\ge \lambda^R),
        \end{aligned}
    \end{equation}
    where $C_1$ and $C_2$ are two constants in Theorem \ref{studied-pde-theorem},  $\alpha\in [0,2)$ and $p'\in[1,\infty]$ with $(2-\alpha + \frac{d}{p'}-\frac{d}{p})>0$.
\end{lemma}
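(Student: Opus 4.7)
The plan is to deduce this lemma as an immediate corollary of the preceding lemma (the one with hypothesis $\|b^R\|_{p_1} < \infty$ and threshold $\lambda^{b^R} = (2C_2\|b^R\|_{p_1})^{2(1-d/p_1)^{-1}}$), by using $\mathbf{(H^{b^R})}$ to bound $\|b^R\|_{p_1}$ explicitly in terms of $R$.

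First I would observe that the growth assumption $\mathbf{(H^{b^R})}$ (display \eqref{eq:growth-inequality}) yields
\begin{equation*}
\|b^R\|_{p_1} \;\le\; \beta I_b(R) + \tilde\beta,
\end{equation*}
so that $b^R \in L^{p_1}(\mathbb{R}^d)$ with an explicit bound. Plugging this into the definition of $\lambda^{b^R}$ from the previous lemma and using the identity $2(1-d/p_1)^{-1} = 2p_1/(p_1-d)$, I get
\begin{equation*}
\lambda^{b^R} \;=\; \bigl(2C_2\|b^R\|_{p_1}\bigr)^{2p_1/(p_1-d)} \;=\; \bigl(4C_2^2\|b^R\|_{p_1}^2\bigr)^{p_1/(p_1-d)} \;\le\; \bigl(4C_2^2(\beta I_b(R)+\tilde\beta)^2\bigr)^{p_1/(p_1-d)} \;=\; \lambda^R,
\end{equation*}
using monotonicity of $t \mapsto t^{p_1/(p_1-d)}$ on $[0,\infty)$. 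Hence every $\lambda \ge \lambda^R$ satisfies $\lambda \ge \lambda^{b^R}$.

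With this comparison in hand, the conclusions of the previous lemma apply verbatim: for any $f\in L^p(\mathbb{R}^d)$ with $p\in (d/2\vee 1, p_1]$, there is a unique $u^R \in W^{2,p}(\mathbb{R}^d)$ solving \eqref{eq:R-PDE}, and for every $\lambda \ge \lambda^R \ge \lambda^{b^R}$ the estimates
\begin{equation*}
\|u^R\|_{2,p} \le 2C_1\|f\|_p, \qquad \lambda^{(2-\alpha+d/p'-d/p)/2}\|u^R\|_{\alpha,p'} \le 2C_2\|f\|_p
\end{equation*}
hold for any admissible $(\alpha,p')$. There is no genuine obstacle here; the only point worth double-checking is the exponent bookkeeping $2(1-d/p_1)^{-1} = 2p_1/(p_1-d)$ so that $\lambda^R$ is indeed at least as large as $\lambda^{b^R}$ under $\mathbf{(H^{b^R})}$. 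The whole content of the lemma is that the threshold $\lambda^{b^R}$, which a priori depends on $b^R$ through its $L^{p_1}$-norm, can be replaced by an $R$-dependent threshold $\lambda^R$ whose growth in $R$ is controlled explicitly by $I_b(R)$; this explicit dependence is what will later propagate through Krylov's and Khasminskii's estimates to yield the decomposition arguments of Section \ref{sec6}.
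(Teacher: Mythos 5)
Your proposal is correct and matches the paper's intent exactly: the paper simply states that Lemma \ref{lemma-u^R-estimate} follows from the preceding (unnumbered) lemma by replacing $\lambda^{b^R}$ with $\lambda^R$, and you supply the one-line justification $\lambda^{b^R}=\bigl(4C_2^2\|b^R\|_{p_1}^2\bigr)^{p_1/(p_1-d)}\le\bigl(4C_2^2(\beta I_b(R)+\tilde\beta)^2\bigr)^{p_1/(p_1-d)}=\lambda^R$ via $\mathbf{(H^{b^R})}$ and monotonicity, so that any $\lambda\ge\lambda^R$ also satisfies $\lambda\ge\lambda^{b^R}$.
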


\begin{theorem}\label{th:exp-bounded}
    If $(\mathbf{H^{b^R}})$ and $\mathbf{(H^{\sigma^R}_1)}$ hold and $\{X_s^R\}_{s\in[0,T]}$ is a solution of SDE \eqref{eq:newsde}, then for any $0\le t_0<t_1\le T$, $f\in L^p(\mathbb{R}^d)$ \ $(p>\frac{d}{2}\vee 1)$, we have
\begin{equation}\label{krylov-t1-t0}
    \mathbb{E}^{\mathscr{F}_{t_0}}\left[ \int^{t_1}_{t_0} f(X^R_s(x))\,ds \right]\le 4C_2([T\lambda^R]^\frac{d}{2p}+[T\lambda^R]^{\frac{d}{2p}-1})(t_1-t_0)^{1-\frac{d}{2p}}\norm{f}_p,
\end{equation}
where $C_2$ is the constant in Theorem \ref{studied-pde-theorem},  $\lambda^R=\big(4C^2_2(\beta I_b(R)+\tilde{\beta})^2\big)^{p_1/(p_1-d)}$.
Moreover, for any $a>0$  we have
    \begin{equation}\label{exp-estimate}
        \begin{aligned}
             &\mathbb{E}\left[ \exp\left( a\int^T_0 \abs{f(X^R_s(x))}\,ds \right) \right]\\
\le & e\cdot\exp\left(T\left[ \frac{4a C_2 ([T\lambda^R]^{\frac{d}{2p}}+[T\lambda^R]^{\frac{d}{2p}-1})\norm{f}_p}{1-e^{-1}} \right]^{(1-\frac{d}{2p})^{-1}}\right).
        \end{aligned}
    \end{equation}
\end{theorem}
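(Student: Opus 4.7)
The plan is to observe that Theorem \ref{th:exp-bounded} is an explicit $R$-dependent version of the preceding theorem, obtained by upgrading the ``a priori unknown'' constant $\lambda^{b^R}$ to the explicit quantity $\lambda^R = (4C_2^2(\beta I_b(R)+\tilde\beta)^2)^{p_1/(p_1-d)}$. First I would verify the key inequality $\lambda^{b^R}\le\lambda^R$. Under $\mathbf{(H^{b^R})}$ we have $\norm{b^R}_{p_1}\le \beta I_b(R)+\tilde\beta$, hence
\begin{equation*}
\lambda^{b^R}=\bigl(2C_2\norm{b^R}_{p_1}\bigr)^{2p_1/(p_1-d)}\le \bigl(2C_2(\beta I_b(R)+\tilde\beta)\bigr)^{2p_1/(p_1-d)}=\lambda^R.
\end{equation*}
Consequently the conclusion of Lemma \ref{lemma-u^R-estimate} holds whenever $\lambda\ge\lambda^R$, and $\lambda^R$ is admissible wherever $\lambda^{b^R}$ appeared in the earlier argument.

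Next, for the Krylov estimate \eqref{krylov-t1-t0} I would repeat \textbf{Step (i)} of the preceding theorem with $\lambda^R$ in place of $\lambda^{b^R}$. Fix $\tilde\lambda\ge\lambda^R$ and solve $(L^{\sigma^R(x)}-\tilde\lambda)u^R+b^R\cdot\nabla u^R=-f$ via Lemma \ref{lemma-u^R-estimate}; the interpolation bound \eqref{lamada-estimate} with $\alpha=0,\,p'=\infty$ (which is admissible since $p>d/2$) yields $\norm{u^R}_\infty\le 2C_2\tilde\lambda^{d/(2p)-1}\norm{f}_p$. Applying It\^o's formula to $u^R(X^R_s)$ on $[t_0,t_1]$, taking conditional expectation to kill the martingale part, and using the PDE to rewrite $L^{\sigma^R}u^R+b^R\cdot\nabla u^R=\tilde\lambda u^R-f$, one obtains
\begin{equation*}
\mathbb{E}^{\mathscr{F}_{t_0}}\Bigl[\int_{t_0}^{t_1}f(X^R_s(x))\,ds\Bigr]\le \tilde\lambda(t_1-t_0)\norm{u^R}_\infty+2\norm{u^R}_\infty\le 2C_2(t_1-t_0)\tilde\lambda^{d/(2p)}\norm{f}_p+4C_2\tilde\lambda^{d/(2p)-1}\norm{f}_p.
\end{equation*}
Setting $\tilde\lambda=T\lambda^R/(t_1-t_0)$ (which satisfies $\tilde\lambda\ge\lambda^R$ because $t_1-t_0\le T$) and simplifying gives \eqref{krylov-t1-t0}.

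For the exponential estimate \eqref{exp-estimate} I would follow \textbf{Steps (ii)--(iii)} verbatim. Pick the interval length
\begin{equation*}
\tau:=\Bigl(\tfrac{1-e^{-1}}{4aC_2(\kappa^{d/(2p)}+\kappa^{d/(2p)-1})\norm{f}_p}\Bigr)^{(1-d/(2p))^{-1}},\qquad \kappa:=T\lambda^R,
\end{equation*}
so that the Krylov bound on any interval of length $\le\tau$ gives $a\,\mathbb{E}^{\mathscr{F}_{t_0}}[\int_{t_0}^{t_1}|f(X^R_s)|\,ds]\le 1-e^{-1}$. A standard recursive/Markov argument (as in Corollary~3.5 of \cite{zhang2018}) then yields $\mathbb{E}^{\mathscr{F}_{t_0}}[(a\int_{t_0}^{t_1}|f(X^R_s)|\,ds)^n]\le n!(1-e^{-1})^n$, whose exponential series sums to $e$. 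Partitioning $[0,T]$ into $\lfloor T/\tau\rfloor+1$ such sub-intervals and iterating conditional expectations from the right endpoint inward multiplies the bounds, producing the factor $e^{T/\tau+1}$, which is exactly the right-hand side of \eqref{exp-estimate}; the degenerate case $\tau>T$ is trivial.

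The only genuinely non-routine issue is bookkeeping: making sure $\lambda^R$ (as opposed to the smaller $\lambda^{b^R}$) is compatible with the interpolation exponents in \eqref{lamada-estimate} and that the definition in \eqref{definition_of_lambda^R} really equals $(2C_2(\beta I_b(R)+\tilde\beta))^{2p_1/(p_1-d)}$; once those algebraic identifications are made, everything else is a transcription of the preceding proof.
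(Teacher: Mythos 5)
Your proposal is correct and takes essentially the same route as the paper: the paper explicitly states Theorem~\ref{th:exp-bounded} is obtained by replacing $\lambda^{b^R}$ with $\lambda^R$ throughout the proof of the preceding (unnumbered) Krylov/Khasminskii theorem, and your verification that $\lambda^{b^R}\le\lambda^R$ under $(\mathbf{H^{b^R}})$ (together with re-running Steps (i)--(iii) with $\tilde\lambda=T\lambda^R/(t_1-t_0)$) is exactly the bookkeeping that justifies this substitution.
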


\begin{corollary}[Generalized It\^o's formula]
    If $(\mathbf{H^{b^R}})$ and $\mathbf{(H^{\sigma^R}_1)}$ hold and $\{X_s^R\}_{s\in[0,T]}$ is a solution of SDE \eqref{eq:newsde}, then for any $f\in W^{2,p}(\mathbb{R}^d)$ with $p>\frac{d}{2}\vee 1$, we have
    \begin{equation}\label{Ito-formula-Sobolev}
        \begin{aligned}
             & f(X^R_t)=f(x)+\int^t_0 (L^{\sigma^R(x)} f + b^R\cdot \nabla f)(X^R_s)\,ds + \int^t_0 \langle \nabla f(X^R_s), \sigma^R(X^R_s)\,d\widetilde{W}_s\rangle.
        \end{aligned}
    \end{equation}
\end{corollary}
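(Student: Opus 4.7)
The plan is a standard approximation argument: mollify $f$ to smooth $f_n$, apply the classical It\^o formula to each $f_n$, and then pass to the limit using Krylov's estimate (Theorem \ref{th:exp-bounded}) to control each of the three terms. Concretely, choose $f_n\in C^\infty_0(\mathbb{R}^d)$ with $f_n\to f$ in $W^{2,p}(\mathbb{R}^d)$. Since $p>\tfrac{d}{2}\vee 1$, the Sobolev embedding theorem yields $W^{2,p}\hookrightarrow C_b(\mathbb{R}^d)$ (in particular $\nabla^i f_n\to\nabla^i f$ in $L^p$ for $i=0,1,2$ and $f_n\to f$ uniformly), so $f_n(x)\to f(x)$ and $f_n(X^R_t)\to f(X^R_t)$ pointwise. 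For each $n$ the classical It\^o formula gives
\begin{equation*}
    f_n(X^R_t)=f_n(x)+\int^t_0 (L^{\sigma^R(x)} f_n + b^R\cdot \nabla f_n)(X^R_s)\,ds + \int^t_0 \langle \nabla f_n(X^R_s), \sigma^R(X^R_s)\,d\widetilde{W}_s\rangle,
\end{equation*}
and it remains to pass to the limit in each of the three terms.

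For the second-order piece, since $\sigma^R$ is bounded by $(\bf{H}_1^{\sigma^R})$, we have $L^{\sigma^R(x)}(f_n-f)\to 0$ in $L^p(\mathbb{R}^d)$. Krylov's estimate \eqref{krylov-t1-t0} applied with exponent $p>\tfrac{d}{2}\vee 1$ then gives $\mathbb{E}\int_0^T |L^{\sigma^R(x)}(f_n-f)(X^R_s)|\,ds\le C\,\|L^{\sigma^R(x)}(f_n-f)\|_p\to 0$. For the first-order drift piece, I would use Sobolev embedding on $\nabla f\in W^{1,p}$: if $p>d$ then $\nabla f\in L^\infty$, while if $\tfrac{d}{2}<p\le d$ then $\nabla f\in L^{q}$ for some $q>d$ (take $q=dp/(d-p)$ when $p<d$ or any $q<\infty$ when $p=d$; the bound $p>d/2$ is precisely what ensures $q>d$). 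By H\"older, $b^R\cdot\nabla(f_n-f)\to 0$ in $L^r(\mathbb{R}^d)$ with $1/r=1/p_1+1/q$, and since $p_1>d$ and $q>d$ we have $r>d/2\vee 1$, so Krylov's estimate applies and
\begin{equation*}
    \mathbb{E}\int_0^T |b^R\cdot \nabla(f_n-f)(X^R_s)|\,ds \le C\,\|b^R\|_{p_1}\,\|\nabla(f_n-f)\|_q\to 0.
\end{equation*}

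For the stochastic integral, by the Burkholder--Davis--Gundy inequality and the boundedness of $\sigma^R$,
\begin{equation*}
  \mathbb{E}\sup_{t\in[0,T]}\Bigl|\int^t_0 \langle \nabla(f_n-f)(X^R_s), \sigma^R(X^R_s)\,d\widetilde{W}_s\rangle\Bigr|^2
  \le C\,\mathbb{E}\int_0^T |\nabla(f_n-f)|^2(X^R_s)\,ds.
\end{equation*}
Since $|\nabla(f_n-f)|^2\to 0$ in $L^{q/2}(\mathbb{R}^d)$ with $q/2>d/2\vee 1$ by the Sobolev analysis above, Krylov's estimate bounds the right-hand side by $C\,\|\nabla(f_n-f)\|_q^2\to 0$, giving $L^2$-convergence of the stochastic integrals. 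Combining these three limits we obtain \eqref{Ito-formula-Sobolev}. The only subtlety is the bookkeeping for the mixed term $b^R\cdot\nabla f$: neither $b^R$ nor $\nabla f$ lies in a space where Krylov's estimate is directly applicable in general, and it is the combination of $p_1>d$ (from $(\mathbf H^{b^R})$) with $p>\tfrac{d}{2}\vee 1$ (which, via Sobolev, pushes $\nabla f$ into $L^q$ with $q>d$) that gives the room needed to land the product in $L^r$ with $r>\tfrac{d}{2}\vee 1$.
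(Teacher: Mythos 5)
Your proof is correct and takes essentially the same approach as the paper: mollify $f$ to $f_n=f*\varphi_n$, apply the classical It\^o formula, then pass to the limit term-by-term via H\"older, Sobolev embedding, and Krylov's estimate (with BDG for the martingale part). The only cosmetic difference is that the paper estimates the entire drift $L^{\sigma^R(x)}f + b^R\cdot\nabla f$ together in a single $L^p$ norm, choosing the H\"older exponent $p_1p/(p_1-p)$ so that the product $b^R\cdot\nabla f$ lands exactly in $L^p$, whereas you land it in a different $L^r$ with $1/r=1/p_1+1/q$; both exponents lie above $d/2\vee 1$, so Krylov's estimate applies either way. Your handling of the martingale term with $q/2=dp/(2(d-p))$ is in fact identical to the paper's choice $\bar p$.
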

\begin{proof}
We just need to consider the case $p\in (d,p_1]$ since $W^{2,p}\hookrightarrow W^{2,p_1}$ when $p>p_1$.\\
By H\"older's inequality  and Sobolev's embedding theorem, we have
    \begin{equation}\label{Ito-drift}
        \begin{aligned}
             & \norm{L^{\sigma^R(x)} f + b^R\cdot \nabla f}_p \lesssim \norm{f}_{2,p}+\norm{b^R}_{p_1}\norm{\nabla f}_{\frac{p_1 p}{p_1-p}}\lesssim \norm{f}_{2,p}.
        \end{aligned}
    \end{equation}
Let $\varphi$ be a nonnegative smooth function with compact support in the unit ball of $\mathbb{R}^d$ and $\int_{\mathbb{R}^d} \varphi(x)\,dx=1$. Set $\varphi_n(x):=n^d\varphi(nx)$,  $f_n:=f*\varphi_n$ and applying It\^o formula to $f_n$. By \eqref{Ito-drift}, we have
\begin{equation}\label{Ito-convergence-1}
    \begin{aligned}
        & \norm{L^{\sigma^R(x)} (f-f_n) + b^R\cdot \nabla (f-f_n)}_p \lesssim \norm{f-f_n}_{2,p}\rightarrow 0. 
    \end{aligned}
\end{equation}
Let $\bar{p}=\frac{dp}{2(d-p)}$, we have
\begin{equation}\label{Ito-convergence-2}
    \begin{aligned}
         & \mathbb{E}\left| \int^t_0 \langle (\nabla f(X^R_s)- \nabla f_n(X^R_s)), \sigma^R(X^R_s)\,d\widetilde{W}_s\rangle\right|^2 \\
         \lesssim& \norm{\sigma^R}^2_\infty\mathbb{E}\int^t_0 \abs{\nabla f(X^R_s)- \nabla f_n(X^R_s)}^2\,ds\\
          \lesssim&\norm{\abs{\nabla f-\nabla f_n}^2}_{\bar{p}}\lesssim\norm{f-f_n}^2_{1,2\bar{p}}\\
          \lesssim&\norm{f-f_n}^2_{2,p}\rightarrow 0,
    \end{aligned}
\end{equation}
where the second inequality is due to Krylov's estimate \eqref{krylov-t1-t0} and the last inequality is due to Sobolev's embedding theorem.
Together, \eqref{Ito-convergence-1} and \eqref{Ito-convergence-2} imply \eqref{Ito-formula-Sobolev}.
\end{proof}

\section{Zvonkin's transformation}\label{sec5}
Let $u^R$ solve the following PDE
\begin{equation}
    \begin{aligned}
         (L^{\sigma^R(x)} -\lambda)u^R + b^R\cdot \nabla u^R = -b^R.
    \end{aligned}
\end{equation}
By Lemma \ref{lemma-u^R-estimate}, we have
\begin{equation}\label{uR2p1}
    \begin{aligned}
         & \norm{u^R}_{2,p_1}\le 2C_1\norm{b^R}_{p_1},\quad \lambda^{(1-\frac{d}{p_1})/2}\norm{u^R}_{1,\infty}\le 2C_2\norm{b^R}_{p_1}\ (\lambda\ge \lambda^R).
    \end{aligned}
\end{equation}
Let $\lambda^R_H=\gamma\lambda^R$ and $ \gamma^{(\frac{d}{2p_1}-\frac{1}{2})}=\frac{1}{2}$, it is easy to check
\begin{equation}\label{u^R-bounded}
    \begin{aligned}
         \norm{\nabla u^R}_\infty\le \norm{u^R}_{1,\infty}\le \gamma^{(\frac{d}{2p_1}-\frac{1}{2})}=\frac{1}{2}.
    \end{aligned}
\end{equation}
Define 
\begin{equation}
    \begin{aligned}
         \Phi_R(x):=x+u^R(x),
    \end{aligned}
\end{equation}
then 

\begin{equation}
    \begin{aligned}
         L^{\sigma^R(x)} \Phi_R + b^R\cdot\nabla \Phi_R = \lambda u^R.
    \end{aligned}
\end{equation}
By \eqref{u^R-bounded}, for all $\lambda\ge \lambda^R_H$, we have
\begin{equation}\label{uR-1-infnity}
    \norm{u^R}_\infty\le \frac{1}{2},\ \ \norm{\nabla u^R}_\infty\le \frac{1}{2}.
\end{equation}
By the definition of $\Phi_R(x)$ and \eqref{uR-1-infnity}, we have
\[
\lim_{\abs{x}\rightarrow \infty}\abs{\Phi_R(x)}=\infty,\ \ \frac{1}{2}\abs{x-y}\le \abs{\Phi_R(x)-\Phi_R(y)}\le 2\abs{x-y}.
\]
Therefore, by Theorem \ref{Hadamard's Theorem}, we obtain $\Phi_R:\mathbb{R}^d\rightarrow \mathbb{R}^d$ is a $C^1$-diffeomorphism and 
\begin{equation}\label{grad_bound}
    \begin{aligned}
         \norm{\nabla\Phi_R}_\infty\le 2,\quad \norm{\nabla{\Phi_R^{-1}}}_\infty\le 2.
    \end{aligned}
\end{equation}

\begin{theorem}\label{th:zvonkin-SDE}
Let $Y^R_t:=\Phi_R(X_t^R)$, then  $X^R_t$ solve equation \eqref{eq:newsde} if and only if $Y^R_t$ solves
\begin{equation}\label{eq:Zvonkin-SDE}
    \left\{\begin{aligned}
        & dY^R_t = \tilde{b}^R(Y^R_t)\,dt+ \tilde{\sigma}^R(Y^R_t)\,d\widetilde{W}_t,\ \ t\in [0,T],\\
        & Y^R_0=\Phi_R(x),
\end{aligned}\right.
\end{equation}
where $\tilde{b}^R(y):=\lambda u^R\circ \Phi_R^{-1}(y)$ and $\tilde{\sigma}^R(y):={(\nabla \Phi_R(\cdot)\sigma^R(\cdot))\circ \Phi_R^{-1}(y)}$.
\end{theorem}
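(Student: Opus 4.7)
The plan is to recognize the statement as an instance of Zvonkin's transformation, proving both implications by applying the generalized It\^o formula \eqref{Ito-formula-Sobolev} componentwise to the $W^{2,p_1}$ perturbation $u^R$. The crucial analytic input is the identity $L^{\sigma^R(\cdot)}\Phi_R + b^R \cdot \nabla \Phi_R = \lambda u^R$ established just before the statement, which converts the singular drift $b^R$ into the bounded drift $\lambda u^R$; it follows directly from $\Phi_R = x + u^R$ and the PDE $(L^{\sigma^R(\cdot)} - \lambda)u^R + b^R\cdot\nabla u^R = -b^R$ that $u^R$ solves.

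For the direction $X^R_t$ solves \eqref{eq:newsde} $\Rightarrow$ $Y^R_t$ solves \eqref{eq:Zvonkin-SDE}, I first apply \eqref{Ito-formula-Sobolev} to each component $u^{R,i} \in W^{2,p_1}(\mathbb{R}^d)$, noting that $p_1 > d \ge d/2 \vee 1$. Adding the trivial differential for $X^{R,i}_t$ produces
\begin{equation*}
\Phi_R^i(X^R_t) = \Phi_R^i(x) + \int_0^t [L^{\sigma^R(\cdot)}\Phi_R^i + b^R\cdot\nabla\Phi_R^i](X^R_s)\,ds + \int_0^t \langle \nabla\Phi_R^i(X^R_s), \sigma^R(X^R_s)\,d\widetilde{W}_s\rangle.
\end{equation*}
The PDE identity collapses the bounded-variation part to $\int_0^t \lambda u^R(X^R_s)\,ds = \int_0^t \lambda u^R\circ\Phi_R^{-1}(Y^R_s)\,ds = \int_0^t \tilde{b}^R(Y^R_s)\,ds$, while the martingale integrand becomes $\nabla\Phi_R(X^R_s)\sigma^R(X^R_s) = \tilde{\sigma}^R(Y^R_s)$. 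This is exactly \eqref{eq:Zvonkin-SDE}.

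For the converse, because $\Phi_R$ is a global $C^1$-diffeomorphism of $\mathbb{R}^d$ with $\|\nabla\Phi_R\|_\infty\le 2$ and $\|\nabla\Phi_R^{-1}\|_\infty\le 2$ by \eqref{grad_bound}, defining $X^R_t := \Phi_R^{-1}(Y^R_t)$ yields a continuous adapted process, and it suffices to apply the generalized It\^o formula to $\Phi_R^{-1}$. The chain rule for weak derivatives, combined with $u^R\in W^{2,p_1}$ and $\|\nabla u^R\|_\infty\le 1/2$, shows that $\Phi_R^{-1} \in W^{2,p_1}_{loc}(\mathbb{R}^d)$; differentiating $\Phi_R \circ \Phi_R^{-1} = \mathrm{id}$ provides the pointwise identity $\nabla\Phi_R^{-1}(y) = [\nabla\Phi_R(\Phi_R^{-1}(y))]^{-1}$. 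Substituting $dY^R_t = \tilde{b}^R(Y^R_t)\,dt + \tilde{\sigma}^R(Y^R_t)\,d\widetilde{W}_t$ into the It\^o expansion of $\Phi_R^{-1}$, the drift and second-order terms produced from $\tilde{b}^R$ and $\tilde{\sigma}^R$ reassemble into $b^R(X^R_s)$ (this is just the reversed PDE identity after the change of variables), while the martingale integrand reduces to $\nabla\Phi_R^{-1}(Y^R_s)\tilde\sigma^R(Y^R_s) = \sigma^R(X^R_s)$, recovering \eqref{eq:newsde}.

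The main obstacle is the converse step: the generalized It\^o formula is stated for globally $W^{2,p}$ functions, whereas $\Phi_R^{-1}$ is only locally Sobolev. I would handle this by a standard localization via stopping times $\tau_n = \inf\{t\ge 0 : |Y^R_t| \ge n\}$, multiplying each component of $\Phi_R^{-1}$ by a smooth cutoff supported in $B(n+1)$ and equal to $1$ on $B(n)$, applying \eqref{Ito-formula-Sobolev} to the cutoff version on $[0,t\wedge\tau_n]$, and then sending $n\to\infty$ using continuity of $Y^R$ so that $\tau_n$ exhausts $[0,T]$ almost surely. The remaining verifications (that the chain-rule identity for $\nabla^2\Phi_R^{-1}$ is valid in the weak sense, and that all integrals pass to the limit) follow from dominated convergence together with the uniform bounds \eqref{u^R-bounded} and \eqref{grad_bound}.
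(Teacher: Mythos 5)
Your proposal is correct and essentially reproduces the paper's proof, which is precisely to apply the generalized It\^o formula \eqref{Ito-formula-Sobolev} together with the identity $L^{\sigma^R}\Phi_R + b^R\cdot\nabla\Phi_R = \lambda u^R$ in both directions. Two remarks on the converse step. First, the stopping-time/cutoff localization you invoke because $\Phi_R^{-1}\notin W^{2,p}(\mathbb{R}^d)$ is heavier than needed: the same obstruction already appears for $\Phi_R$ itself, and in both cases it is resolved by splitting off the identity. Writing $\Phi_R^{-1}=\mathrm{id}-u^R\circ\Phi_R^{-1}$, the chain rule together with the two-sided Jacobian bounds implied by \eqref{grad_bound} and $u^R\in W^{2,p_1}(\mathbb{R}^d)$ gives $\Phi_R^{-1}-\mathrm{id}\in W^{2,p_1}(\mathbb{R}^d)$ globally, so \eqref{Ito-formula-Sobolev} applies to the $W^{2,p_1}$ part directly, with no cutoff and no limit, just as you do in the forward direction. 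Second, and more substantively, your argument (like the paper's one-line ``similarly'') leaves tacit that in the converse direction the generalized It\^o formula must be available along $Y^R_t$ solving \eqref{eq:Zvonkin-SDE}, whereas \eqref{Ito-formula-Sobolev} is proved for $X^R_t$ solving \eqref{eq:newsde}. This is harmless because by \eqref{observe} the coefficients $\tilde b^R$ and $\tilde\sigma^R$ are bounded, and $\tilde\sigma^R$ is uniformly elliptic and H\"older continuous, so the Krylov estimate, and hence the mollification argument behind the generalized It\^o formula, goes through for $Y^R_t$ as well; but this is worth stating rather than folding into the word ``apply.''
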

 \begin{proof}
      Applying  It\^o formula \eqref{Ito-formula-Sobolev} to $\Phi_R(X^R_t)$, we obtain
\[
\Phi_R(X^R_t)=\Phi_R(x)+\lambda\int^t_0 u^R(X^R_s)\,ds+\int^t_0 \nabla\Phi_R(X^R_s)\sigma^R(X^R_s)\,d\widetilde{W}_s.
\]
Noticing that $Y^R_t=\Phi_R(X_t^R)$, we obtain $Y^R_t$ solves \eqref{eq:Zvonkin-SDE}. Similarly, applying It\^o formula \eqref{Ito-formula-Sobolev} to $\Phi^{-1}_R(Y^R_t)$, we completed the proof.
 \end{proof}

\section{The proof of Theorem \ref{main-theorem}}\label{sec6}
\begin{proof}
In this section the letter $\mathbf{C}$ and $\mathbf{\widetilde{C}}$ will denote some unimportant constant whose value  is independent of $R$ and may change in different places. Whose dependence on parameters can be traced from the context. We also use $\mathbf{C}(T)$ and  $\mathbf{C}(N)$ to emphasize the constant  $\mathbf{C}$  depend on $T$ and $N$ respectively. 

{Firstly, we prove SDE \eqref{eq:newsde}} exists a unique strong solution.

\begin{theorem}\label{th:XR-exist}
   Under $(\mathbf{H^{b^R}_1})$, $(\mathbf{H^{\sigma^R}_1})$ and  $(\mathbf{H^{\sigma^R}_2})$, for all $x\in \mathbb{R}^d$, the SDE\ \eqref{eq:newsde}  exists a unique strong solution.
\end{theorem}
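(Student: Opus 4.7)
The plan is to establish existence and uniqueness of the strong solution to the original SDE \eqref{eq:newsde} by working on the Zvonkin-transformed equation \eqref{eq:Zvonkin-SDE} and then transferring back via Theorem \ref{th:zvonkin-SDE}. Since the transformation $\Phi_R$ is a bi-Lipschitz $C^1$-diffeomorphism with $\tfrac{1}{2}\le\|\nabla\Phi_R\|_\infty,\|\nabla\Phi_R^{-1}\|_\infty\le 2$, the strong well-posedness of \eqref{eq:newsde} is equivalent to that of \eqref{eq:Zvonkin-SDE}, so it suffices to show the transformed SDE admits a unique strong solution.

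First I would check the regularity and non-degeneracy of the coefficients of \eqref{eq:Zvonkin-SDE}. For the drift $\tilde{b}^R(y)=\lambda u^R\circ\Phi_R^{-1}(y)$: by \eqref{uR2p1} we have $u^R\in W^{2,p_1}(\mathbb{R}^d)$ with $p_1>d$, so Sobolev embedding yields $u^R\in C^{1,\alpha}(\mathbb{R}^d)$ with $\alpha=1-d/p_1$; combined with $\|u^R\|_\infty\le\tfrac{1}{2}$ from \eqref{uR-1-infnity}, this makes $\tilde{b}^R$ bounded and $\alpha$-Hölder continuous on $\mathbb{R}^d$. For the diffusion $\tilde{\sigma}^R(y)=(\nabla\Phi_R\cdot\sigma^R)\circ\Phi_R^{-1}(y)$: the matrix $\nabla\Phi_R=I+\nabla u^R$ is $\alpha$-Hölder (again by Sobolev embedding) and uniformly bounded, $\sigma^R$ is $\varpi$-Hölder by \eqref{holder_continous}, and $\Phi_R^{-1}$ is globally Lipschitz, hence $\tilde{\sigma}^R$ is $\min(\varpi,\alpha)$-Hölder continuous on $\mathbb{R}^d$. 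Uniform ellipticity follows from
\[
\tilde{\sigma}^R(\tilde{\sigma}^R)^\top(y)=\nabla\Phi_R(x)\,\sigma^R(\sigma^R)^\top(x)\,\nabla\Phi_R(x)^\top,\qquad x=\Phi_R^{-1}(y),
\]
together with \eqref{sigmaR-nondegenerate} and \eqref{grad_bound}: any unit vector $\xi$ satisfies $|\nabla\Phi_R(x)^\top\xi|\ge\tfrac{1}{2}|\xi|$ (since $\nabla\Phi_R^{-1}$ has norm at most $2$), giving a lower ellipticity constant controlled by $\delta$ alone.

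With $\tilde{b}^R$ bounded and Hölder, and $\tilde{\sigma}^R$ bounded, Hölder continuous, and uniformly elliptic on $\mathbb{R}^d$, classical strong well-posedness results for SDEs (for instance Veretennikov's theorem \cite{veretennikov1979}, applied after noting that Hölder continuous non-degenerate diffusion certainly satisfies the required Sobolev-type hypotheses; alternatively the Stroock--Varadhan martingale problem combined with pathwise uniqueness for bounded measurable drift and Hölder-elliptic diffusion) give existence and pathwise uniqueness of a strong solution $\{Y^R_t\}_{t\in[0,T]}$ starting at $\Phi_R(x)$. Setting $X^R_t:=\Phi_R^{-1}(Y^R_t)$ and invoking Theorem \ref{th:zvonkin-SDE} (which is justified by the generalized Itô formula \eqref{Ito-formula-Sobolev} applied to $\Phi_R\in W^{2,p_1}_{\mathrm{loc}}$), we obtain a strong solution to \eqref{eq:newsde}; pathwise uniqueness transfers likewise, since two solutions of \eqref{eq:newsde} produce, via $\Phi_R$, two solutions of \eqref{eq:Zvonkin-SDE} with the same initial datum.

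The main obstacle is the verification that the Zvonkin transformation actually brings us into the regime where a classical strong existence theorem applies — concretely, producing the Hölder regularity of $\tilde{\sigma}^R$ and the uniform ellipticity constant, together with making sure the bi-Lipschitz nature of $\Phi_R$ allows us to carry pathwise uniqueness back and forth. All of this is already in place thanks to \eqref{uR2p1}--\eqref{grad_bound}, so the proof should amount to assembling these facts and citing the appropriate classical result.
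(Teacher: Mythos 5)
Your reduction via the Zvonkin transformation $\Phi_R$ is correct, and so is the analysis of $\tilde{b}^R$ (bounded, with bounded gradient) and of the uniform ellipticity of $\tilde{\sigma}^R$. But the step where you conclude strong well-posedness of \eqref{eq:Zvonkin-SDE} has a genuine gap: you claim that $\tilde{\sigma}^R$ being bounded, H\"older continuous and non-degenerate is enough to invoke Veretennikov's theorem or some other classical strong uniqueness result, adding that ``H\"older continuous non-degenerate diffusion certainly satisfies the required Sobolev-type hypotheses.'' That assertion is false — a H\"older continuous function need not be weakly differentiable — and, more to the point, in dimension $d\ge 2$ there is no classical theorem giving pathwise uniqueness for multiplicative noise with a merely H\"older (sub-Lipschitz) diffusion coefficient. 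Veretennikov's cited 1979 paper treats $\sigma=I$; the later extensions to multiplicative noise that you would need require Sobolev regularity of $\sigma$, not H\"older regularity, so the appeal does not go through as stated.

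What actually makes the argument work — and what the paper invokes by referring to the procedure in Zhang and Zhao, Theorem $3.1$ — is precisely the Sobolev regularity of $\tilde{\sigma}^R$, not its H\"older continuity. Since $\nabla\sigma^R\in L^{p_1}(\mathbb{R}^d)$ by $(\mathbf{H}_2^{\sigma^R})$ and $u^R\in W^{2,p_1}(\mathbb{R}^d)$ by Lemma \ref{lemma-u^R-estimate}, one obtains $\nabla\tilde{\sigma}^R\in L^{p_1}(\mathbb{R}^d)$ with $p_1>d$. This is exactly what allows the maximal-function inequality of Theorem \ref{Max-inequality}(i) to bound $\norm{\tilde{\sigma}^R(x)-\tilde{\sigma}^R(y)}$ by $\abs{x-y}\bigl(\mathcal{M}\norm{\nabla\tilde{\sigma}^R}(x)+\mathcal{M}\norm{\nabla\tilde{\sigma}^R}(y)\bigr)$, after which Krylov's estimate \eqref{krylov-t1-t0} makes the resulting singular factor integrable along the solution and pathwise uniqueness follows by a (stochastic) Gronwall argument. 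Combined with the existence of a weak solution (which does follow from boundedness, continuity and non-degeneracy of the coefficients) and the Yamada--Watanabe theorem — which is also the paper's route — this yields the unique strong solution. By discarding the Sobolev information on $\tilde{\sigma}^R$ in favour of mere H\"older continuity, your proposal throws away the very ingredient that drives the pathwise-uniqueness proof.
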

\begin{proof}
    By Theorem \ref{th:zvonkin-SDE}, we only need to prove SDE \eqref{eq:Zvonkin-SDE} exists a unique strong solution.
By the definition of $\tilde{b}^R$, $\tilde{\sigma}^R$ and Lemma \ref{lemma-u^R-estimate}, for all $\lambda\ge \lambda^R_H$, we have
\begin{equation}\label{observe}
    \begin{aligned}
        &\norm{\tilde{b}^R}_\infty\le \frac{1}{2}\lambda,\ \ \norm{\nabla \tilde{b}^R}_\infty\le \lambda,\quad \norm{\tilde{\sigma}^R}_\infty\le 2\norm{\sigma^R}_\infty,\\
    \end{aligned}
\end{equation}
Note that $\tilde{b}^R$ and $\tilde{\sigma}^R$ are both continuous and bounded. By Yamada-Watanabe's theorem, we only need to show the pathwise uniqueness. Performing the same procedure in \cite[Theorem $3.1$]{zhang2018}, we completed the proof. 
\end{proof}

{\begin{lemma}\label{lm:XR-two-point}
   Under $(\mathbf{H^{b^R}})$, $(\mathbf{H^{\sigma^R}_1})$ and $(\mathbf{H^{\sigma^R}_2})$, let $\{X^R_s(x)\}_{s\in[0,T]}$ and $\{X^R_s(y)\}_{s\in[0,T]}$ are two solutions of SDE \eqref{eq:newsde} with initial conditions $X^R_0(x)=x$ and $X^R_0(y)=y$ respectively, then for any $\alpha\in \mathbb{R}$, we have
\begin{equation}\label{eq:XR-x-y}
    \begin{aligned}
         \mathbb{E}\left[\abs{X_t^R(x)-X_t^R(y)}^\alpha\right]\le \widetilde{\mathbf{C}}\left(\exp\left(\widetilde{\mathbf{C}}\,(\lambda^R)^{\frac{p_1}{p_1-d}}\right)\right)\abs{x-y}^\alpha,
    \end{aligned}
\end{equation}
{
\begin{equation}\label{1-plus-X}
    \begin{aligned}
         & \mathbb{E}\left[\left(1+\abs{X^R_t(x)}^2\right)^\alpha\right]\le \widetilde{\mathbf{C}}\left( \exp\big(\widetilde{\mathbf{C}}\,\lambda^R\big) \right)\left(1+\abs{x}^2\right)^\alpha,
    \end{aligned}
\end{equation}
}
and for all $p\ge 2$,
\begin{equation}\label{X^R_one_poit_estimate}
    \begin{aligned}
         &  \mathbb{E}\left[ \sup_{0\le s\le t}\abs{X^R_s(x)}^p\right]\le \widetilde{\mathbf{C}}\,(1+\abs{x}^p+(\lambda^R)^p), 
    \end{aligned}
\end{equation}
\begin{equation}\label{sup_xy}
    \begin{aligned}
       &\mathbb{E}\left[\sup_{0\le s\le t}\abs{X_t^R(x)-X_t^R(y)}^p\right]\le \widetilde{\mathbf{C}}\left(\exp\left(\widetilde{\mathbf{C}}\,(\lambda^R)^{\frac{p_1}{p_1-d}}\right)\right)\abs{x-y}^p,
    \end{aligned}
\end{equation}
where $\widetilde{\mathbf{C}}$ is independent of $\beta$, $\tilde{\beta}$ and $R$.
\end{lemma}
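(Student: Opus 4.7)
The plan is to push everything through the Zvonkin transformation set up in Section 5. Let $Y^R_t := \Phi_R(X^R_t)$, which by Theorem \ref{th:zvonkin-SDE} satisfies \eqref{eq:Zvonkin-SDE} with the uniformly controlled coefficients from \eqref{observe}: $\|\tilde b^R\|_\infty \le \lambda/2$, $\|\nabla \tilde b^R\|_\infty \le \lambda$, and $\|\tilde\sigma^R\|_\infty \le 2\|\sigma^R\|_\infty$, where $\|\sigma^R\|_\infty$ is bounded independently of $R$ by $(\mathbf{H}^{\sigma^R}_1)$. Because $\Phi_R$ is a bi-Lipschitz $C^1$-diffeomorphism with $\|\nabla \Phi_R\|_\infty, \|\nabla \Phi_R^{-1}\|_\infty \le 2$ and $\|u^R\|_\infty \le 1/2$, one has $\tfrac12 |x-y| \le |\Phi_R(x)-\Phi_R(y)| \le 2|x-y|$ and $|y| - 1/2 \le |\Phi_R^{-1}(y)| \le |y| + 1/2$, so each of the four estimates for $X^R$ reduces to the analogous estimate for $Y^R$. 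Throughout, I would fix $\lambda = \lambda^R_H = \gamma\lambda^R \asymp \lambda^R$.

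For the one-point bounds \eqref{1-plus-X} and \eqref{X^R_one_poit_estimate}, I would apply the generalized It\^o formula \eqref{Ito-formula-Sobolev} (transferred to the $Y^R$-equation; $\tilde b^R, \tilde\sigma^R$ are Sobolev of the same class as $b^R,\sigma^R$) to $(1+|Y^R_t|^2)^\alpha$ and to $|Y^R_t|^p$ respectively. Since $\|\tilde b^R\|_\infty \le \lambda/2$ and $\|\tilde\sigma^R\|_\infty$ is bounded by an $R$-independent constant, these are routine: the drift of $(1+|Y^R_t|^2)^\alpha$ is pointwise dominated by $\mathbf{C}|\alpha|(1+|\alpha|) \lambda \cdot (1+|Y^R_t|^2)^\alpha$, so Gronwall yields an $e^{\mathbf{C}\lambda}$-type bound that gives \eqref{1-plus-X}, while BDG plus Gronwall applied to $|Y^R_t|^p$ gives a polynomial-in-$\lambda$ bound producing \eqref{X^R_one_poit_estimate}.

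The heart of the argument is \eqref{eq:XR-x-y} and \eqref{sup_xy}. Set $Z_t := Y^R_t(x) - Y^R_t(y)$. By Theorem \ref{Max-inequality}(i) applied componentwise to (mollifications of) $\tilde b^R$ and $\tilde\sigma^R$, both $|\tilde b^R(Y^R_s(x))-\tilde b^R(Y^R_s(y))|$ and $\|\tilde\sigma^R(Y^R_s(x))-\tilde\sigma^R(Y^R_s(y))\|$ are dominated by $|Z_s|$ times the maximal-function quantities
\[
M^b_s := (\mathcal{M}|\nabla \tilde b^R|)(Y^R_s(x)) + (\mathcal{M}|\nabla \tilde b^R|)(Y^R_s(y)),\qquad
M^\sigma_s := (\mathcal{M}\|\nabla \tilde\sigma^R\|)(Y^R_s(x)) + (\mathcal{M}\|\nabla \tilde\sigma^R\|)(Y^R_s(y)).
\]
Applying It\^o to $\log |Z_t|^2$ (after mollification, passing to the limit via \eqref{Ito-formula-Sobolev}) and then to $|Z_t|^\alpha=\exp(\tfrac{\alpha}{2}\log|Z_t|^2)$ yields
\[
|Z_t|^\alpha = |Z_0|^\alpha \exp\Bigl(\int_0^t A_s\,ds\Bigr)\,\mathcal{E}_t,
\]
where $|A_s| \le \mathbf{C}(\alpha)(M^b_s + (M^\sigma_s)^2)$ and $\mathcal{E}_t$ is a (local) exponential martingale. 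Taking expectation, applying Cauchy--Schwarz to separate $\mathcal{E}_t$ (which is a true martingale by Novikov once the exponential integrability below is established) from the drift factor, and bounding $\mathbb{E}\exp\bigl(c\int_0^t (M^b_s + (M^\sigma_s)^2)\,ds\bigr)$ by the Khasminskii bound \eqref{exp-estimate} produces the required $\exp(\widetilde{\mathbf{C}}(\lambda^R)^{p_1/(p_1-d)})$ factor. To apply \eqref{exp-estimate} one needs $\mathcal{M}|\nabla\tilde b^R|$ and $(\mathcal{M}\|\nabla\tilde\sigma^R\|)^2$ to lie in $L^{p_1}$ and $L^{p_1/2}$ respectively, which follows from Theorem \ref{Max-inequality}(ii) together with the $L^{p_1}$-bounds $\|\nabla \tilde b^R\|_{p_1} \lesssim \lambda \|\nabla u^R\|_{p_1}$ and $\|\nabla\tilde\sigma^R\|_{p_1}$ coming from a change of variables (using $\|\nabla\Phi_R^{-1}\|_\infty\le 2$) combined with \eqref{uR2p1} and $(\mathbf{H}_2^{\sigma^R})$. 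For \eqref{sup_xy} the same argument is combined with BDG applied to the martingale part of $\log|Z_t|^2$ in order to move the supremum inside the expectation before exponentiating.

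The principal obstacle is the final bookkeeping: matching the exponent that emerges from inserting $\|\nabla \tilde b^R\|_{p_1}$ and $\|\nabla \tilde\sigma^R\|_{p_1}$ into \eqref{exp-estimate} (whose output is raised to the power $(1-d/(2p))^{-1}$) with the definition $\lambda^R = \bigl(4C_2^2(\beta I_b(R)+\tilde\beta)^2\bigr)^{p_1/(p_1-d)}$ from \eqref{definition_of_lambda^R}, so as to arrive at exactly $(\lambda^R)^{p_1/(p_1-d)}$ in the exponent and not a larger power. A secondary technical point is the rigorous It\^o formula for $\log|Z_t|^2$ when $\tilde\sigma^R$ is only Sobolev--H\"older, which is handled by a mollification/stopping scheme together with \eqref{Ito-formula-Sobolev}, noting that the pathwise strict positivity of $|Z_t|$ for $x\neq y$ follows from strong uniqueness (Theorem \ref{th:XR-exist}).
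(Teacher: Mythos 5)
Your plan follows the paper's own architecture (Zvonkin reduction, Dol\'eans--Dade representation from It\^o applied to $|Z_t|^\alpha$ -- equivalently, as you do, via $\log|Z_t|^2$ -- Hardy--Littlewood maximal bounds on the increments of $\tilde b^R$ and $\tilde\sigma^R$, and Khasminskii's estimate to close the exponential moments), and your treatment of the one-point estimates and of the $\tilde\sigma^R$ term is correct. The "principal obstacle" you flag is, however, a real gap, and it lies precisely in the $\tilde b^R$ term: you propose to feed $\mathcal{M}|\nabla\tilde b^R|$ into the Khasminskii bound~\eqref{exp-estimate} with $p=p_1$, using $\|\nabla\tilde b^R\|_{p_1}\lesssim \lambda\|\nabla u^R\|_{p_1}$. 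With $\lambda\asymp\lambda^R$ and, from~\eqref{uR2p1} and~\eqref{definition_of_lambda^R}, $\|\nabla u^R\|_{p_1}\lesssim\|b^R\|_{p_1}\asymp(\lambda^R)^{(p_1-d)/(2p_1)}$, the output of~\eqref{exp-estimate} is
\[
\exp\!\left(\widetilde{\mathbf{C}}\Bigl[(\lambda^R)^{\frac{d}{2p_1}}\,(\lambda^R)^{1+\frac{p_1-d}{2p_1}}\Bigr]^{\frac{2p_1}{2p_1-d}}\right)=\exp\!\left(\widetilde{\mathbf{C}}\,(\lambda^R)^{\frac{3p_1}{2p_1-d}}\right).
\]
For $p_1>2d$ one has $\tfrac{3p_1}{2p_1-d}>\tfrac{p_1}{p_1-d}$, so this does not match~\eqref{eq:XR-x-y}. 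The discrepancy is not cosmetic: since $\lambda^R\asymp(\log R)^{(p_1-d)/p_1}$, the claimed exponent gives $\exp\bigl(\widetilde{\mathbf{C}}(\lambda^R)^{p_1/(p_1-d)}\bigr)\asymp R^{\widetilde{\mathbf{C}}}$, polynomial in $R$, which is exactly what makes the series over $R$ in~\eqref{key_estimate} converge against the weights $(R-1)^{-n}$; with your larger exponent the factor grows super-polynomially in $R$ for $p_1>2d$ and the summation step in the proof of Theorem~\ref{main-theorem} breaks down.

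The way the paper avoids this is to treat the $\tilde b^R$ contribution in $L^\infty$, not $L^{p_1}$: after fixing $\lambda=\lambda^R_H=\gamma\lambda^R$, the bound~\eqref{u^R-bounded} gives $\|\nabla u^R\|_\infty\le\tfrac12$, hence the relevant quantity (the paper's $M_3$, taken with $f=\lambda^R_H\mathcal{M}|\nabla u^R|$ and $p=\infty$) is bounded by $e\cdot\exp(\widetilde{\mathbf{C}}\lambda^R)$, which is dominated by $\exp(\widetilde{\mathbf{C}}(\lambda^R)^{p_1/(p_1-d)})$. Only the contributions of $\nabla^2 u^R$ and $\nabla\sigma^R$, which enter squared into $\mathbf{A}_s$ and $|\mathbf{B}_s|^2$, are handled via $L^{p_1/2}$-Khasminskii (giving exactly $(1-d/p_1)^{-1}=p_1/(p_1-d)$ as exponent), and there the matching of exponents works out. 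A secondary point: your maximal-function quantities $M^b_s, M^\sigma_s$ are evaluated at $Y^R_s$, whereas~\eqref{exp-estimate} is stated for the process $X^R_s$; the paper sidesteps this by first writing, e.g., $\mathcal{M}|\nabla u^R|(\Phi_R^{-1}(Y^R_s))=\mathcal{M}|\nabla u^R|(X^R_s)$ (cf.~\eqref{tilde-sigma-minus}--\eqref{tilde-b-minus}), so that the Khasminskii theorem applies as stated without a separate change of variables for the $Y$-process.
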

\begin{proof}
    For  $\Phi_R(x)\neq \Phi_R(y)$, take $ 0<\varepsilon<\abs{\Phi_R(x)-\Phi_R(y)}$ and set
    \begin{equation}
        \begin{aligned}
             & \tau_\varepsilon := \inf\{ \abs{Y^R_t(\Phi_R(x))-Y^R_t(\Phi_R(y))}\le \epsilon \}.
        \end{aligned}
    \end{equation}
For convenience, we define $Z^R_t := Y^R_t(\Phi_R(x)) -Y^R_t(\Phi_R(y))$ where $\{Y^R_s(\Phi_R(x))\}_{s\in[0,T]}$ and $\{Y^R_s(\Phi_R(y))\}_{s\in[0,T]}$ are the solutions of SDE \eqref{eq:Zvonkin-SDE} with initial conditions $Y^R_0(\Phi_R(x))=\Phi_R(x)$ and $Y^R_0(\Phi_R(y))=\Phi_R(y)$ respectively.\\
By It\^o formula, we have
\begin{equation}\label{eq:ito-formula}
    \begin{aligned}
          \abs{Z^R_{t\wedge \tau_\varepsilon}}^\alpha= &\abs{\Phi_R(x)-\Phi_R(y)}^\alpha + \int^{t\wedge \tau_\varepsilon}_0\alpha\abs{Z^R_s}^{\alpha-2}\langle Z^R_s, (\tilde{\sigma}^R(Y^R_s(x))-\tilde{\sigma}^R(Y^R_s(y)))\,d\widetilde{W}_s \rangle+\\        
          &\int^{t\wedge \tau_\varepsilon}_0\alpha\abs{Z^R_s}^{\alpha-2}\langle Z^R_s, (\tilde{b}^R(Y^R_s(x))-\tilde{b}^R(Y^R_s(y))) \rangle\,ds+\\ 
          &\int^{t\wedge \tau_\varepsilon}_0\frac{\alpha}{2}\abs{Z^R_s}^{\alpha-2}\norm{\tilde{\sigma}^R(Y^R_s(x))-\tilde{\sigma}^R(Y^R_s(y))}^2\,ds+\\
         &\int^{t\wedge \tau_\varepsilon}_0\frac{\alpha(\alpha-2)}{2} \abs{Z^R_s}^{\alpha-4}\abs{(\tilde{\sigma}^R(Y^R_s(x))-\tilde{\sigma}^R(Y^R_s(y)))^\top Z^R_s}^2\,ds.
    \end{aligned}
\end{equation}
Set 
\begin{equation}\label{de:B}
   \mathbf{B}_s :=\frac{\alpha\big(\tilde{\sigma}^R(Y^R_s(x))-\tilde{\sigma}^R(Y^R_s(y))\big)^\top Z^R_s}{\abs{Z_s^R}^2}
\end{equation}
and 
\begin{equation}\label{de:A}
\begin{aligned}
 \mathbf{A}_s :=&\frac{\alpha\langle Z^R_s,(\tilde{b}^R(Y^R_s(x))-\tilde{b}^R(Y^R_s(y))) \rangle}{\abs{Z^R_s}^2}+\frac{\frac{\alpha}{2}\norm{\tilde{\sigma}^R(Y^R_s(x))-\tilde{\sigma}^R(Y^R_s(y))}^2}{\abs{Z^R_s}^2}\\
     &+\frac{\frac{\alpha(\alpha-2)}{2}\abs{\tilde{\sigma}^R(Y^R_s(x))-\tilde{\sigma}^R(Y^R_s(y)))^\top Z^R_s}^2}{\abs{Z_s^R}^4}.  
\end{aligned} 
\end{equation}
By \eqref{eq:ito-formula}, we have
\begin{equation*}
    \begin{aligned}
         & \abs{Z^R_{t\wedge \tau_\varepsilon}}^\alpha=\abs{\Phi_R(x)-\Phi_R(y)}^\alpha+\int_0^{t\wedge \tau_{\varepsilon}} & \abs{Z^R_{s\wedge \tau_\varepsilon}}^\alpha\left( \mathbf{A}_s\,ds +\mathbf{B}_s\,d\widetilde{W}_s \right).
    \end{aligned}
\end{equation*}
By the Dol\'eans-Dade's exponential, we obtain
\begin{equation}\label{D_D_formula}
    \begin{aligned}
         &  \abs{Z^R_{t\wedge \tau_\varepsilon}}^\alpha=\abs{\Phi_R(x)-\Phi_R(y)}^\alpha\exp\left( \int_0^{t\wedge \tau_{\varepsilon}} \mathbf{B}_s\,d\widetilde{W}_s -\frac{1}{2}\int_0^{t\wedge \tau_{\varepsilon}}\abs{\mathbf{B}_s}^2\,ds+\int_0^{t\wedge \tau_{\varepsilon}} \mathbf{A}_s\,ds\right).
    \end{aligned}
\end{equation}
By the definitions of $\tilde{b}^R$ and $\tilde{\sigma}^R$ in Theorem \ref{th:zvonkin-SDE} and Lemma \ref{Max-inequality} (i), it is easy to see
\begin{equation}\label{tilde-sigma-minus}
    \begin{aligned}
         \abs{\tilde{\sigma}^R(x)-\tilde{\sigma}^R(y)}&\le C_d\abs{x-y}\left( \mathcal{M}\abs{\nabla {\sigma^R}}(\Phi_R^{-1}(x))+ \mathcal{M}\abs{\nabla {\sigma^R}}(\Phi_R^{-1}(y))\right)\\
         &\quad+C_d\abs{x-y}\left( \mathcal{M}\abs{\nabla^2 u^R}(\Phi_R^{-1}(x))+ \mathcal{M}\abs{\nabla^2 u^R}(\Phi_R^{-1}(y))\right),
    \end{aligned}
\end{equation}
and
\begin{equation}\label{tilde-b-minus}
    \begin{aligned}
           &\abs{\tilde{b}^R(x)-\tilde{b}^R(y)}=\abs{\lambda u^R\circ \Phi_R^{-1}(x)-\lambda u^R\circ \Phi_R^{-1}(y)}\\ 
\le& \lambda C_d\abs{\Phi_R^{-1}(x)-\Phi_R^{-1}(y)}\left( \mathcal{M}\abs{\nabla u^R}(\Phi_R^{-1}(x))+\mathcal{M}\abs{\nabla u^R}(\Phi_R^{-1}(y)) \right)\\ 
\le& \lambda C_d \abs{x-y}\left( \mathcal{M}\abs{\nabla u^R}(\Phi_R^{-1}(x))+\mathcal{M}\abs{\nabla u^R}(\Phi_R^{-1}(y)) \right).\\ 
\end{aligned}
\end{equation}
Firstly, we  shall prove that for any $\mu>0$,
\begin{equation*}
    \begin{aligned}
         & \mathbb{E}\left[  \exp\left( \mu\int^{T\wedge \tau_\varepsilon}_0\abs{\mathbf{B}_s}^2\,ds \right) \right]\le C(e)\cdot\exp\left( {\widetilde{\mathbf{C}}}\,[\lambda^R]^{(1-\frac{d}{p_1})^{-1}} \right),
    \end{aligned}
\end{equation*}
and
\begin{equation*}
    \begin{aligned}
         & \mathbb{E}\left[  \exp\left( \mu\int^{T\wedge \tau_\varepsilon}_0\abs{\mathbf{A}_s}\,ds \right) \right]\le C(e)\cdot\exp\left( {\widetilde{\mathbf{C}}}\,[\lambda^R]^{(1-\frac{d}{p_1})^{-1}} \right).
    \end{aligned}
\end{equation*}
Combine the definitions of \eqref{de:A}, \eqref{de:B} with \eqref{tilde-sigma-minus}, \eqref{tilde-b-minus}, we only need to estimate
\begin{equation}\label{eq:max1}
    M_1:=\mathbb{E}\left[ \exp\left(\int^{T\wedge \tau_\varepsilon}_0 \mathcal{M}\abs{\nabla^2 u^R}^2(X^R_s(x))\,ds\right)\right],
\end{equation}

\begin{equation}\label{eq:max2}
    M_2:=\mathbb{E}\left[ \exp\left(\int^{T\wedge \tau_\varepsilon}_0 \mathcal{M}\norm{\nabla \sigma^R}^2(X^R_s(x))\,ds\right)\right],
\end{equation}
and
\begin{equation}\label{eq:max3}
    M_3:=\mathbb{E}\left[ \exp\left(\int^{T\wedge \tau_\varepsilon}_0 \lambda \mathcal{M}\abs{\nabla u^R}(X^R_s(x))\,ds\right)\right].
\end{equation}
Take $f=\mathcal{M}\abs{\nabla^2u^R}^2$ and $p=\frac{p_1}{2}$ in \eqref{exp-estimate}, then we have
\begin{equation}
     M_1\le e\cdot\exp\left(T\left[ \frac{p_1(p_1-2) C_2 ({(T\lambda^R)}^{\frac{d}{p_1}}+{(T\lambda^R)}^{\frac{d}{p_1}-1})\norm{\mathcal{M}\abs{\nabla^2u^R}^2}_{\frac{p_1}{2}}}{1-e^{-1}} \right]^{(1-\frac{d}{p_1})^{-1}}\right).
\end{equation}
{We can take $T\lambda^R>1$, then ${(T\lambda^R)}^{\frac{d}{p_1}-1}<{(T\lambda^R)}^{\frac{d}{p_1}}$. By Theorem \ref{Max-inequality} (ii) and \eqref{uR2p1}, we have
\begin{equation}\label{max-ineq-bR}
   \norm{\mathcal{M}\abs{\nabla^2u^R}^2}_{\frac{p_1}{2}}\lesssim \norm{\nabla^2 u^R}^2_{p_1}\lesssim \norm{b^R}^2_{p_1}.
\end{equation}
Therefore,
\begin{equation}\label{eA1}
    \begin{aligned}
         M_1\le &e\cdot\exp\left( {\widetilde{\mathbf{C}}}\left[(\lambda^R)^{\frac{d}{p_1}} \norm{b^R}^2_{p_1}\right]^{(1-\frac{d}{p_1})^{-1}}\right)\\
\le &e\cdot\exp\left( {\widetilde{\mathbf{C}}}\,[\lambda^R]^{(1-\frac{d}{p_1})^{-1}} \right),
    \end{aligned}
\end{equation}
where the second inequality is due to $(\mathbf{H^{b^R}})$ and \eqref{definition_of_lambda^R}.
}
Similarly, taking $f=\mathcal{M}\norm{\nabla \sigma^R}^2$ and $p=\frac{p_1}{2}$ 
in \eqref{exp-estimate}, we obtain
\begin{equation*}
    \begin{aligned}
          M_2\le& e\cdot\exp\left({\widetilde{\mathbf{C}}} \,\left[(\lambda^R)^{\frac{d}{p_1}} \norm{\nabla \sigma^R}^2_{p_1}\right]^{(1-\frac{d}{p_1})^{-1}} \right)\\
\le& e\cdot\exp\left( {\widetilde{\mathbf{C}}}\,[\lambda^R+(\lambda^R)^{\frac{d}{p_1}}]^{(1-\frac{d}{p_1})^{-1}} \right)\\
\le& e\cdot\exp\left( {\widetilde{\mathbf{C}}}\,[\lambda^R]^{(1-\frac{d}{p_1})^{-1}} \right).
    \end{aligned}
\end{equation*}
Take $f=\lambda^R_H \cdot  \mathcal{M}\abs{\nabla u^R}$ and $p=\infty$, we obtain
\begin{equation*}
    \begin{aligned}
         & M_3\le e\cdot\exp\left( {\widetilde{\mathbf{C}}}\cdot \lambda^R \right)\le e\cdot\exp\left( {\widetilde{\mathbf{C}}}\,[\lambda^R]^{(1-\frac{d}{p_1})^{-1}} \right).
    \end{aligned}
\end{equation*}
By Novikov's criterion, the process
\begin{equation*}
    \begin{aligned}
         & t \mapsto \exp\left( 2\int_0^{t\wedge \tau_\varepsilon} \mathbf{B}_s\,d\widetilde{W}_s-2\int_0^{t\wedge \tau_\varepsilon} \abs{\mathbf{B}_s}^2\,ds \right)=:M^\varepsilon_t
\end{aligned}
\end{equation*}
is a continuous exponential martingale. By H\"older's inequality, we obtain
\begin{equation*}
    \begin{aligned}
          \mathbb{E}\abs{Z^R_{t\wedge \tau_\varepsilon}}^\alpha\le &2^\alpha\abs{x-y}^\alpha \left( \mathbb{E}M^\varepsilon_t \right)^\frac{1}{2}\left( \mathbb{E}\left[ \exp\left( \int^{t\wedge \tau_\varepsilon}_0 \abs{\mathbf{B}_s}^2\,ds +2 \int^{t\wedge \tau_\varepsilon}_0 \abs{\mathbf{A}_s}\,ds\right) \right] \right)^\frac{1}{2}\\ 
\le &C(\alpha,e)\exp\left( {\widetilde{\mathbf{C}}}\,[\lambda^R]^{(1-\frac{d}{p_1})^{-1}} \right)\abs{x-y}^{\alpha}.
    \end{aligned}
\end{equation*}
Let $\varepsilon\downarrow 0$,  we have
\begin{equation*}
    \begin{aligned}
         & \mathbb{E}\left[\abs{Y^R_t(\Phi_R(x))-Y^R_t(\Phi_R(y))}^\alpha \right]\le &C(\alpha,e)\exp\left( {\widetilde{\mathbf{C}}}\,[\lambda^R]^{(1-\frac{d}{p_1})^{-1}} \right)\abs{x-y}^{\alpha}.
    \end{aligned}
\end{equation*}
Moreover, if $\alpha>0$, then 
\begin{equation}\label{alpha_g_0}
    \begin{aligned}
         \mathbb{E}\left[\abs{X_t^R(x)-X_t^R(y)}^\alpha\right]&=\mathbb{E}\left[\abs{\Phi^{-1}_R(Y^R_t(\Phi_R(x)))-\Phi^{-1}_R(Y^R_t(\Phi_R(y)))}^\alpha\right]\\
         & \le \norm{\nabla \Phi^{-1}_R}^\alpha_\infty\mathbb{E}[\abs{Z^R_t}^\alpha]\\
         &\le C(\alpha,e)\exp\left( {\widetilde{\mathbf{C}}}\,[\lambda^R]^{(1-\frac{d}{p_1})^{-1}} \right)\abs{x-y}^{\alpha}.
    \end{aligned}
\end{equation}
Notice  that
\begin{equation}
    \begin{aligned}
         & \abs{Y^R_t(\Phi_R(x))-Y^R_t(\Phi_R(y))}=\abs{\Phi_R(X^R_t(x))-\Phi_R(X^R_t(y))}\le 2\abs{X^R_t(x)-X^R_t(y)},
    \end{aligned}
\end{equation}
if $\alpha<0$, then
\begin{equation}\label{alpha_l_0}
    \begin{aligned}
         & \abs{X^R_t(x)-X^R_t(y)}^{\alpha}\\
\le &2^{-\alpha}\abs{Y^R_t(\Phi_R(x))-Y^R_t(\Phi_R(y))}^{\alpha}\\\le &C(\alpha,e)\exp\left( {\widetilde{\mathbf{C}}}\,[\lambda^R]^{(1-\frac{d}{p_1})^{-1}} \right)\abs{x-y}^{\alpha}.
    \end{aligned}
\end{equation}
Together, \eqref{alpha_g_0} and \eqref{alpha_l_0} imply \eqref{eq:XR-x-y}.\\
{Notice that 
\[
\Phi_R(\Phi_R^{-1}(x))=x,\ \Phi_R(x)=x+u^R(x),
\]
we have 
\[
\Phi_R^{-1}(x)+u^R(\Phi_R^{-1}(x))=x.
\]
Therefore, 
\begin{equation}\label{Phi0}
   \abs{\Phi_R(x)}\vee\abs{\Phi^{-1}_R(x)}\le \abs{x}+\norm{u^R}_\infty\le \abs{x}+\frac{1}{2}. 
\end{equation}
}
{By $X^R_s(x)=\Phi_R^{-1}(Y^R_s(\Phi_R(x)))$, \eqref{grad_bound} and \eqref{Phi0}, we have
\[
  \frac{1}{2}\left(1+\abs{Y^R_s(\Phi_R(x))}\right)\le 1+\abs{X^R_s(x)}\le  2\left(1+\abs{Y^R_s(\Phi_R(x))}\right).
\]
Combining the inequality 
\[
\frac{1}{2} (1+\abs{x})^2\le (1+\abs{x}^2)\le (1+\abs{x})^2,   
\]
we can obtain
\begin{equation}
    \begin{aligned}
         & \left( 1+\abs{X^R_s(x)}^2 \right)^\alpha\le C(\alpha)\left( 1+\abs{Y^R_s(\Phi_R(x))}^2 \right)^\alpha
    \end{aligned}
\end{equation}
where $C(\alpha)=8^\alpha \vee 8^{-\alpha}$. Therefore, we just need to consider the  estimate of $\mathbb{E}\left[ \left( 1+ \abs{Y^R_s(\Phi_R(x))}^2\right)^\alpha \right]$.

Applying It\^o formula to $\left(1+\abs{Y^R_s(\Phi_R(x))}^2\right)^\alpha$, we have
\begin{equation}
    \begin{aligned}
          (1+\abs{Y^R_t}^2)^\alpha &=(1+\abs{\Phi_R(x)}^2)^\alpha + 2\alpha\int^t_0 (1+\abs{Y^R_s}^2)^{\alpha -1}\langle Y^R_s, \tilde{\sigma}^R(Y^R_s)d\widetilde{W}_s\rangle \\
          & + 2\alpha\int^t_0 (1+\abs{Y^R_s}^2)^{\alpha -1}\langle \tilde{b}(Y^R_s), Y^R_s)\rangle\,ds\\ 
          & +\alpha \int^t_0 (1+\abs{Y^R_s}^2)^{\alpha-1}\norm{\sigma(Y^R_s)}^2\,ds\\
          & +2\alpha(\alpha-1)\int^t_0(1+\abs{Y^R_s}^2)^{\alpha-2}\abs{\tilde{\sigma}^R(Y^R_s)Y^R_s}^2\,ds.
    \end{aligned}
\end{equation}
By \eqref{observe} and \eqref{sup_Y^R}, we obtain
\[
\mathbb{E}\left[  (1+\abs{Y^R_t}^2)^\alpha \right] \le \tilde{\mathbf{C}} (1+\abs{x}^2)^\alpha +(\tilde{\mathbf{C}}\, \lambda^R+\tilde{\mathbf{C}} ) \int^t_0\mathbb{E}\left[  (1+\abs{Y^R_s}^2)^\alpha \right]\,ds.
\]
Using Gronwall's inequality, we proved  \eqref{1-plus-X}.
}

It is easy to see
\begin{equation*}
  \begin{aligned}
     & \mathbb{E}\left[ \sup_{0\le s\le t}\abs{X^R_s(x)}^p\right]\\
\le& \mathbb{E}\left[ \sup_{0\le s\le t}\abs{\Phi_R^{-1}(Y^R_s(\Phi_R(x)))}^p\right]\\
\le & \mathbb{E}\left[ \sup_{0\le s\le t}\abs{\Phi_R^{-1}(Y^R_s(\Phi_R(x)))-\Phi_R^{-1}(0)+\Phi_R^{-1}(0)}^p\right]\\
\le & C(p)\mathbb{E}\left[ \sup_{0\le s \le t} \abs{Y_s^R(\Phi_R(x))}^p \right] + C(p) \abs{\Phi_R^{-1}(0)}^p\\
\le &C(p) \mathbb{E}\left[ \sup_{0\le s \le t} \abs{Y_s^R(\Phi_R(x))}^p \right]+C(p),
  \end{aligned}
\end{equation*}
where the last inequality is due to $\norm{\nabla \Phi_R^{-1}}_{\infty}\le 2$ and $\Phi_R^{-1}(0)\le 1/2$. So, we only need to estimate $\mathbb{E}\left[ \sup_{0\le s\le t} \abs{Y^R_s(\Phi_R(x))}^p \right],\  p\ge 2$.\\
By the equation \eqref{eq:Zvonkin-SDE}, we have
\begin{equation}\label{sup_Y^R}
  \begin{aligned}
     & \mathbb{E}\left[ \sup_{0\le s\le t} \abs{Y^R_s}^p \right]\\
\le& C(p)\mathbb{E}\left[ \abs{\Phi_R(x)}^p +\sup_{0\le s\le t}\abs{\int^s_0 \tilde{b}^R(Y^R_r)\,dr}^p +\sup_{0\le s\le t}\abs{\int^s_0 \tilde{\sigma}^R(Y^R_r)\,d\widetilde{W}_r}^p \right]\\
:=&C(p)(I_1+I_2+I_3).
  \end{aligned}
\end{equation}
It is not hard to see
\begin{equation*}
    \begin{aligned}
         & I_1\le (x+\norm{u^R}_\infty)^p\le C(p)(1+\abs{x}^p),\\
& I_2 \le \mathbb{E}\left[ t^{p-1}\int^t_0\abs{\tilde{b}^R(Y^R_r)}^p\,dr \right]\le t^p\norm{\tilde{b}^R}^p_\infty\le \frac{1}{2^p}t^p\lambda^p,\\
&I_3\le \mathbb{E}\left[ \left(\int^t_0 \norm{\tilde{\sigma}^R(Y^R_r)}^2\,dr \right)^\frac{p}{2} \right]\le t^\frac{p}{2}\norm{\tilde{\sigma}^R}^p_\infty{\le t^\frac{p}{2}2^p\norm{\sigma^R}^p_\infty}.
    \end{aligned}
\end{equation*}
So, we obtained \eqref{X^R_one_poit_estimate}.\\
Notice that
$$\mathbb{E}[\sup_{0\le t\le T }\abs{\Phi^{-1}_R(Y^R_t(\Phi_R(x)))-\Phi^{-1}_R(Y^R_t(\Phi_R(y)))}^p]\le 2^p\mathbb{E}[\sup_{0\le t\le T}\abs{Y_t^R(\Phi_R(x))-Y_t^R(\Phi_R(y))}^p],$$
we only need to estimate $\mathbb{E}[\sup_{0\le t\le T}\abs{Z^R_t}^p]$. By \eqref{D_D_formula}, we have 
\begin{equation*}
    \begin{aligned}
          &\mathbb{E}[\sup_{0\le t\le T}\abs{Z^R_t}^p]\\
\le& \abs{\Phi_R(x)-\Phi_R(y)}^p\left( \mathbb{E}{\sup_{0\le t\le T} {M}^2_1(t)} \right)^\frac{1}{2}\left( \exp\left( 2\int^T_0 \abs{\mathbf{A}_s}\,ds\right) \right)^{\frac{1}{2}}\\
\le& \abs{\Phi_R(x)-\Phi_R(y)}^p\left( \mathbb{E}{{M}^2_1(T)} \right)^\frac{1}{2}\left( \exp\left( 2\int^T_0 \abs{\mathbf{A}_s}\,ds\right) \right)^{\frac{1}{2}}\\
\le& \abs{\Phi_R(x)-\Phi_R(y)}^p\left( \mathbb{E}{{M}_4(T)} \right)^\frac{1}{4}\left( \exp\left( 6\int^T_0 \abs{\mathbf{B}_s}^2\,ds\right) \right)^{\frac{1}{4}}\left( \exp\left( 2\int^T_0 \abs{\mathbf{A}_s}\,ds\right) \right)^{\frac{1}{2}}\\
\le& \widetilde{\mathbf{C}}\left(\exp\left(\widetilde{\mathbf{C}}\,(\lambda^R)^{\frac{p_1}{p_1-d}}\right)\right)\abs{x-y}^p,
    \end{aligned}
\end{equation*}
where 
\[
M_k(t):=\exp\left( k\int_0^{t} \mathbf{B}_s\,d\widetilde{W}_s-\frac{k^2}{2}\int_0^{t} \abs{\mathbf{B}_s}^2\,ds \right). 
\]
We proved \eqref{sup_xy}.

\end{proof}}
{ 
Let $D_t(x):=\sup_{0\le s\le t}\abs{X_s(x)}$, $\tau_R(x):=\inf\{t\ge 0, \abs{X_t(x)}>R\}$ and similarly, let $D^R_t(x):=\sup_{0\le s\le t}\abs{X^R_s(x)}$, $\tau^R_R(x):=\inf\{ t\ge 0, \abs{X^R_t(x)}>R \}$. It is easy to see 
$$\{D_t(x)\ge R\}=\{ \tau_R\le t \}, \{D^R_t(x)\ge R\}=\{ \tau^R_R\le t \}.$$
By the definitions of $b^R$ and $\sigma^R$, it is not hard to obtain
$$ \{\tau_R\le t \}\subset \{\tau^R_R\le t \}. $$
For all $x\in B(N)$, we have 
   \begin{equation*}
  \begin{aligned}
     \mathbb{P}(\tau_R\le t)&\le \mathbb{P}(\tau^R_R\le t)=\mathbb{P}(D^R_t(x)\ge R)\\
&\le \frac{\mathbb
{E}[\abs{D^R_t(x)}^n]}{R^n}\\
&\le \frac{\widetilde{\mathbf{C}}(1+\abs{x}^n+(\lambda^R)^n)}{R^n},
  \end{aligned}
\end{equation*}
where the second inequality is due to Markov's inequality, the last inequality is due to Lemma \ref{lm:XR-two-point}. By the definition of $\lambda^R$ in  \eqref{definition_of_lambda^R}, we can obtain ${(\lambda^R)^n}/{R^n}\rightarrow 0$ when $R\rightarrow \infty$.
Hence, we have $\tau_R\rightarrow \infty$ when $R\rightarrow \infty$.
On the other hand, by the definitions of $b^R$ and $\sigma^R$, we observe that if $D_t(x)<R$, then $X_t(x)=X^R_t(x)$ i.e. 
$X_{t}(x)=X^R_{t}(x)$ for all $t<\tau_R$. By  Theorem \ref{th:XR-exist},  SDE \eqref{eq:newsde} exists a unique strong solution. We can define $X_t(x)=X^R_t(x)$ for $t<\tau_R$. It is clear that $\{X_t(x)\}_{t\in[0,T]}$ is the unique strong solution of SDE \eqref{eq:b}.

By \eqref{X^R_one_poit_estimate} and definition of $\lambda^R$, for all $x\in B(N)$, we have
\begin{equation}\label{eq:sup_one_point}
  \begin{aligned}
     &\mathbb{E}[\sup_{0\le t\le T}\abs{X_t(x)}^p]\\
\le&\sum^\infty_{R=1}  \mathbb{E}\left[\abs{D^R_T(x)}^p\mathds{1}_{\{ R-1\le D_T(x)<R \}}\right]\\
\le&\sum^\infty_{R=2}  \mathbb{E}\left[\abs{D^R_T(x)}^p\mathds{1}_{\{ R-1\le D_T(x)<R \}}\right]+\mathbf{C}(N)\\
\le&\sum^\infty_{R=2}  \mathbb{E}\left[\abs{D^R_T(x)}^{2p} \right]^{\frac{1}{2}}  \left[\mathbb{P}(D^{R-1}_T(x)\ge R-1)\right]^{\frac{1}{2}}+\mathbf{C}(N)\\
\le& \sum^\infty_{R=2}  \mathbb{E}\left[\abs{D^R_T(x)}^{2p} \right]^{\frac{1}{2}}\cdot \frac{\mathbb{
E}[(D_t^{R-1}(x))^{2p}]^{\frac{1}{2}}}{(R-1)^p}+\mathbf{C}(N)\\
\le& \sum^\infty_{R=2} \frac{\mathbb{E}[(D^R_T(x))^{2p}]^\frac{1}{2}\cdot\mathbb{
E}[(D_T^{R-1}(x))^{2p}]^{\frac{1}{2}}}{(R-1)^p}+\mathbf{C}(N)\\
\le& \mathbf{C}(N).
  \end{aligned}
\end{equation}
where the last inequality  is due to  \eqref{X^R_one_poit_estimate} and  the definition  of $\lambda^R$.\\
For all $x,y\in B(N)$, we consider the following estimate
\begin{equation}\label{key_estimate}
  \begin{aligned}
     & \mathbb{E}\left[\sup_{0\le t\le T} \abs{X_t(x)-X_t(y)}^p \right]\\
=& \sum^\infty_{R=1}  \mathbb{E}\left[\sup_{0\le t\le T}\abs{X^R_t(x)-X^R_t(y)}^p\mathds{1}_{\{R-1\le D_T(x)\vee D_T(y)<R\}}\right]\\
         \le& \sum^\infty_{R=1}\left(\mathbb{E}\left[\sup_{0\le t\le T}\abs{X^R_t(x) - X^R_t(y)}^{2p}\right]\right)^{\frac{1}{2}}\mathbb{P}\Big(D_T(x)\vee D_T(y)\ge R-1\Big)^{\frac{1}{2}}\\
         \le& \sum^\infty_{R=1}\left(\mathbb{E}\left[\sup_{0\le t\le T}\abs{X^R_t(x) - X^R_t(y)}^{2p}\right]\right)^{\frac{1}{2}}\Big(\mathbb{P}(D_T(x)\ge R-1) + \mathbb{P}(D_T(y)\ge R-1) \Big)^{\frac{1}{2}}.\\ 
         \le& \sum^\infty_{R=1}\left(\mathbb{E}\left[\sup_{0\le t\le T}\abs{X^R_t(x) - X^R_t(y)}^{2p}\right]\right)^{\frac{1}{2}}\Big(\mathbb{P}(D^{R-1}_T(x)\ge R-1) + \mathbb{P}(D^{R-1}_T(y)\ge R-1) \Big)^{\frac{1}{2}}\\ 
        \le & \sum^\infty_{R=2}\left(\mathbb{E}\left[\sup_{0\le t\le T}\abs{X^R_t(x) - X^R_t(y)}^{2p}\right]\right)^{\frac{1}{2}}\left(\frac{\mathbb{E}[(D^{R-1}_T(x))^{2n}]}{(R-1)^{2n}}+\frac{\mathbb{E}[(D^{R-1}_T(y))^{2n}]}{(R-1)^{2n}}\right)^\frac{1}{2}+\mathbf{C}\abs{x-y}^p\\
        \le &\sum^\infty_{R=2} \widetilde{\mathbf{C}}\abs{x-y}^p{\left(\exp\left(\widetilde{\mathbf{C}}\,(\lambda^R)^{\frac{p_1}{p_1-d}}\right)\right)\frac{(1+\abs{x}^n)}{(R-1)^n}}+\sum^\infty_{R=2} \widetilde{\mathbf{C}}\abs{x-y}^p{\left(\exp\left(\widetilde{\mathbf{C}}\,(\lambda^R)^{\frac{p_1}{p_1-d}}\right)\right)\frac{(\lambda^R)^n}{(R-1)^n}}+\\
         & \sum^\infty_{R=2} \widetilde{\mathbf{C}}\abs{x-y}^p{\left(\exp\left(\widetilde{\mathbf{C}}\,(\lambda^R)^{\frac{p_1}{p_1-d}}\right)\right)\frac{(1+\abs{y}^n)}{(R-1)^n}}+\mathbf{C}\abs{x-y}^p\\
\le & \sum^\infty_{R=2} \widetilde{\mathbf{C}}\abs{x-y}^p{\left(\exp\left(2\widetilde{\mathbf{C}}\,(\lambda^R)^{\frac{p_1}{p_1-d}}\right)\right)\frac{(2+\abs{x}^n)}{(R-1)^n}}+\mathbf{C}\abs{x-y}^p\\
&+\sum^\infty_{R=2} \widetilde{\mathbf{C}}\abs{x-y}^p{\left(\exp\left(2\widetilde{\mathbf{C}}\,(\lambda^R)^{\frac{p_1}{p_1-d}}\right)\right)\frac{(2+\abs{y}^n)}{(R-1)^n}},
\end{aligned}
\end{equation}
where the last inequality we used the fact that we can find a constant ${C}(\widetilde{\mathbf C},p_1,d,n(\beta))$ such that  for all $\lambda^R\ge {C}(\widetilde{\mathbf C},p_1,d,n(\beta))$,
\begin{equation}\label{lambda^R_condition_1}
    \begin{aligned}
         & (\lambda^R)^n\le  \exp\left(\widetilde{\mathbf{C}}\,(\lambda^R)^{\frac{p_1}{p_1-d}}\right).
    \end{aligned}
\end{equation}
In fact, if let $\tilde{\beta}$ satisfy $(2C_2\tilde \beta)^{2(1-\frac{d}{p_1})^{-1}}= {C}(\widetilde{\mathbf C},p_1,d,n(\beta))$, then for all $R\ge 1$, $\lambda^R$ satisfy \eqref{lambda^R_condition_1}, where $n(\beta)$ be decided by \eqref{n(beta)}.\\
On the other hand, by the definitions of $\lambda^R$ and $I_b(R)$, we have
\begin{equation*}
    \begin{aligned}
         &  \mathbb{E}\left[ \sup_{0\le t\le T}\abs{X_t(x)-X_t(y)}^p \right]\\
\le &\sum^\infty_{R=2} \mathbf{C}(\beta,\tilde{\beta}) R^{\mathbf{C}(\beta)}\frac{(2+\abs{x}^n)}{(R-1)^n}  + \sum^\infty_{R=2} \mathbf{C}(\beta,\tilde{\beta}) R^{\mathbf{C}(\beta)}\frac{(2+\abs{y}^n)}{(R-1)^n}+\mathbf{C}\abs{x-y}^p.
\end{aligned}
\end{equation*}
Therefore, take $n$ satisfy
\begin{equation}\label{n(beta)}
    \begin{aligned}
         & \mathbf{C}(\beta)+1<n, 
    \end{aligned}
\end{equation}
we obtain
\begin{equation}\label{eq:sup_two_point}
    \begin{aligned}
          &  \mathbb{E}\left[\sup_{0\le t\le T} \abs{X_t(x)-X_t(y)}^p \right]\le \mathbf{C} \Big((1+\abs{x}^n)+(1+\abs{y}^n)\Big)\abs{x-y}^p.
    \end{aligned}
\end{equation}
By the Lemma $2.1$ in \cite{xie2016}, \eqref{eq:sup_one_point} and \eqref{eq:sup_two_point}, we proved Theorem \ref{main-theorem}(A).

Following the proof of Zhang \cite{zhang2011b}, it is not hard to prove for any bounded measurable function $f$ and $t\in [0,T]$, 
\begin{equation}\label{eq:R_continuous}
    \begin{aligned}
         & x\mapsto \mathbb{E}[f(X^R_t(x))] \text{\ is continuous.}
    \end{aligned}
\end{equation}
For any $x,y\in B(N)$, we have
\begin{equation}\label{strong_feller}
    \begin{aligned}
         & \abs{\mathbb{E}\left[ f(X_t(x)-f(X_t(y))) \right]}\\
\le&\abs{\mathbb{E}\left[ (f(X_t(x)-f(X_t(y))))\mathds{1}_{\{t\le \tau_R\}} \right]}+2\norm{f}_\infty\mathbb{P}(t>\tau_R)\\
\le& \abs{\mathbb{E}\left[ (f(X^R_t(x)-f(X^R_t(y))))\mathds{1}_{\{t\le \tau_R\}} \right]}+2\norm{f}_\infty\mathbb{P}(t>\tau_R)\\
\le& \abs{\mathbb{E}\left[ (f(X^R_t(x)-f(X^R_t(y)))) \right]}+4\norm{f}_\infty\mathbb{P}(t>\tau_R)\\
    \end{aligned}
\end{equation}
Together, \eqref{strong_feller}, \eqref{eq:R_continuous} and $\tau_R\rightarrow \infty$ when $R\rightarrow \infty$ imply Theorem \ref{main-theorem}(B).
}

{\begin{lemma}\label{th:two-point-estimate-final}
    Under $\mathbf{(H^b)}$, $\mathbf{(H^\sigma_1)}$ and  $\mathbf{(H^\sigma_2)}$, let $\{X_t(x)\}_{t\in [0,T]}$ and $\{X_t(y)\}_{t\in[0,T]}$  are two solutions of  SDE \eqref{eq:b} with initial conditions $X_0(x)=x$ and $X_0(y)=y$ respectively, then  for all $0\le t\le T$, $\alpha\in \mathbb{R}$ and $x,y\in B(N)$, we have
    \begin{equation}\label{eq:X-x-y}
        \begin{aligned}
             & \mathbb{E}[\abs{X_t(x)-X_t(y)}^\alpha]\le \mathbf{C}(N) \abs{x-y}^\alpha,
        \end{aligned}
    \end{equation}

{\begin{equation}\label{1-plus-x^2}
    \begin{aligned}
         & \mathbb{E}\left[ \left( 1+\abs{X_t(x)}^2 \right)^\alpha \right]\le \mathbf{C}(N)\left( 1+\abs{x}^2  \right)^\alpha,
    \end{aligned}
    \end{equation}}
and for all $p\ge 2$,
\begin{equation}\label{t-s}
    \begin{aligned}
         & \mathbb{E}[\abs{X_t(x)-X_s(x)}^p]\le \mathbf{C}(N)\abs{t-s}^{\frac{p}{2}}.
    \end{aligned}
\end{equation}
\end{lemma}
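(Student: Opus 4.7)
The strategy is the same annular-decomposition argument that proved \eqref{eq:sup_one_point} and \eqref{eq:sup_two_point}: split each expectation according to which annulus $\{R-1\le D_T(x)\vee D_T(y)<R\}$ the paths live in, exploit the identity $X_t=X^R_t$ on $\{t<\tau_R\}$ (so that on the cited event we have $X_t(x)=X^R_t(x)$ and $X_t(y)=X^R_t(y)$), apply Cauchy--Schwarz, and invoke Lemma \ref{lm:XR-two-point} together with the tail estimate $\mathbb{P}(D_T(x)\ge R-1)\le \mathbb{P}(D^{R-1}_T(x)\ge R-1)\le \mathbb{E}[(D^{R-1}_T(x))^n]/(R-1)^n$ deduced from Markov's inequality and \eqref{X^R_one_poit_estimate}. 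The delicate summability is governed by the mild growth assumed in $\mathbf{(H^b)}$ and $\mathbf{(H^\sigma_2)}$: since $\lambda^R\asymp(\log R)^{(p_1-d)/p_1}$, we have $\exp\bigl(\widetilde{\mathbf{C}}(\lambda^R)^{p_1/(p_1-d)}\bigr)\lesssim R^{\widetilde{\mathbf{C}}(\beta)}$, so the resulting series converges once $n$ is chosen as in \eqref{n(beta)}.

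Concretely, for \eqref{eq:X-x-y} I would write
\[
\mathbb{E}\bigl[|X_t(x)-X_t(y)|^\alpha\bigr]=\sum_{R\ge 1}\mathbb{E}\bigl[|X^R_t(x)-X^R_t(y)|^\alpha\mathds{1}_{\{R-1\le D_T(x)\vee D_T(y)<R\}}\bigr],
\]
apply Cauchy--Schwarz, bound the first factor by \eqref{eq:XR-x-y} applied with exponent $2\alpha$ (crucially valid for every real $\alpha$, so no separate treatment of the negative case is needed), and bound the probability factor by the tail estimate above. For \eqref{1-plus-x^2} the identical decomposition with $D_T(x)$ alone, combined with \eqref{1-plus-X} in place of \eqref{eq:XR-x-y}, yields the claim; the factor $\exp(\widetilde{\mathbf{C}}\lambda^R)$ is even milder (subpolynomial in $R$), so summability is immediate.

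For the time-increment estimate \eqref{t-s}, I would first establish its truncated counterpart $\mathbb{E}[|X^R_t(x)-X^R_s(x)|^p]\le \mathbf{C}(R)|t-s|^{p/2}$. This follows by applying Burkholder--Davis--Gundy and H\"older to the Zvonkin-transformed equation \eqref{eq:Zvonkin-SDE}, using the uniform bounds $\|\tilde{b}^R\|_\infty\le \lambda/2$ and $\|\tilde{\sigma}^R\|_\infty\le 2\|\sigma^R\|_\infty$ recorded in \eqref{observe}, and then pulling back through the bi-Lipschitz map $\Phi_R^{-1}$ (Lipschitz constant $2$ by \eqref{grad_bound}). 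Splitting $\mathbb{E}[|X_t(x)-X_s(x)|^p]$ by the same annular decomposition and combining Cauchy--Schwarz with the tail bound transfers the estimate to the original $X$.

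The main technical obstacle is ensuring that the polynomial $R$-growth of the constants produced by \eqref{eq:XR-x-y} and \eqref{sup_xy} is dominated by the polynomial $R$-decay of the tail probabilities of $D^{R-1}_T$. This is exactly what the mild growth conditions $\mathbf{(H^b)}$, $\mathbf{(H^\sigma_2)}$ are engineered for, and the same choice \eqref{n(beta)} of $n$ used in the proof of Theorem \ref{main-theorem}(A) suffices here. All other ingredients (localization, Zvonkin transformation, Krylov-type moment bounds) have already been established, so the remaining work is a careful bookkeeping of the annular sums.
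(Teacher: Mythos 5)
Your proposal matches the paper's own proof almost line for line: the annular decomposition on $\{R-1\le D_T(x)\vee D_T(y)<R\}$ combined with the identity $X_t=X^R_t$ on that event, Cauchy--Schwarz, the truncated moment bounds \eqref{eq:XR-x-y}, \eqref{1-plus-X}, \eqref{X^R_one_poit_estimate} from Lemma \ref{lm:XR-two-point}, the Markov tail estimate $\mathbb{P}(D_T(x)\ge R-1)\le \mathbb{E}[(D^{R-1}_T(x))^{2n}]/(R-1)^{2n}$, the observation that $\exp(\widetilde{\mathbf{C}}(\lambda^R)^{p_1/(p_1-d)})\lesssim R^{\widetilde{\mathbf{C}}(\beta)}$ while $\exp(\widetilde{\mathbf{C}}\lambda^R)$ is subpolynomial, the choice of $n$ from \eqref{n(beta)}, and the reduction of the time-increment estimate to the Zvonkin-transformed equation via the bi-Lipschitz map $\Phi_R^{-1}$ together with the $L^\infty$ bounds \eqref{observe}. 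This is exactly the argument the paper gives, so the proposal is correct and takes essentially the same route.
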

\begin{proof}
   Set $D_t(x):=\sup_{0\le s\le t}\abs{X_t(x)}$ and $D_t(y):=\sup_{0\le s\le t}\abs{X_t(y)}$. It is easy to see
if $D_t(x)< R$ and $D_t(y)< R$, then 
$
X_t(x) = X^R_t(x), X_t(y) = X_t^R(y).
$ 
Moreover, by Lemma \ref{lm:XR-two-point}, similar to \eqref{key_estimate}, for all $t\in [0,T]$ and $x,y\in B(N)$, we have 
\begin{equation}\label{eq:deposite}
    \begin{aligned}
        &\mathbb{E}[\abs{X_t(x)-X_t(y)}^\alpha] \\
        =& \sum^\infty_{R=1}  \mathbb{E}\left[\abs{X^R_t(x)-X^R_t(y)}^\alpha\mathds{1}_{\{R-1\le D_T(x)\vee D_T(y)<R\}}\right]\\
         \le& \sum^\infty_{R=1}\left(\mathbb{E}\left[\abs{X^R_t(x) - X^R_t(y)}^{2\alpha}\right]\right)^{\frac{1}{2}}\mathbb{P}\Big(D_T(x)\vee D_T(y)\ge R-1\Big)^{\frac{1}{2}}\\
         \le& \sum^\infty_{R=1}\left(\mathbb{E}\left[\abs{X^R_t(x) - X^R_t(y)}^{2\alpha}\right]\right)^{\frac{1}{2}}\Big(\mathbb{P}(D_T(x)\ge R-1) + \mathbb{P}(D_T(y)\ge R-1) \Big)^{\frac{1}{2}}\\ 
\le & \sum^\infty_{R=2}\left(\mathbb{E}\left[\abs{X^R_t(x) - X^R_t(y)}^{2\alpha}\right]\right)^{\frac{1}{2}}\left(\frac{\mathbb{E}[(D^{R-1}_T(x))^{2n}]}{(R-1)^{2n}}+\frac{\mathbb{E}[(D^{R-1}_T(y))^{2n}]}{(R-1)^{2n}}\right)^\frac{1}{2}+\mathbf{C}\abs{x-y}^\alpha\\
\le &{\mathbf{C}\,(1+\abs{x}^n+\abs{y}^n)\abs{x-y}^\alpha}\\
\le& \mathbf{C}(N)\abs{x-y}^\alpha,
    \end{aligned}
\end{equation}
{and 
\begin{equation*}
    \begin{aligned}
         &  \mathbb{E}\left[ \left( 1+\abs{X_t(x)}^2 \right)^\alpha \right]\\
=&  \sum^\infty_{R=1}  \mathbb{E}\left[\left( 1+\abs{X^R_t(x)}^2 \right)^\alpha\mathds{1}_{\{R-1\le D_T(x)<R\}}\right]\\
\le& \sum^\infty_{R=2} \left( \mathbb{E}\left[  \left( 1+\abs{X^R_t(x)}^2 \right)^{2\alpha}\right]  \right)^\frac{1}{2} \left( \frac{\mathbb{E}[(D^{R-1}_T(x))^{2n}]}{(R-1)^{2n}} \right)^\frac{1}{2} +\mathbf{C}(1+\abs{x}^2)^\alpha\\
\le & \mathbf{C}\,\big(1+\abs{x}^n\big)\,\big(1+\abs{x}^2\big)^\alpha\\
\le & \mathbf{C}(N)(1+\abs{x}^2)^\alpha.
    \end{aligned}
\end{equation*}
}
On the other hand, it is not hard to obtain
\begin{equation*}
    \begin{aligned}
         &\mathbb{E}[\abs{X^R_t(x)-X^R_s(x)}^p]\\
         \le& C(p) \mathbb{E}[\abs{Y^R_t(\Phi_R(x))-Y^R_s(\Phi_R(x))}^p]\\
        \le &\mathbf{C}(T)\big(1+(\lambda^R)^p\big)\abs{t-s}^\frac{p}{2},
    \end{aligned}
\end{equation*}
where the last inequality is due to 
\begin{equation*}
    \begin{aligned}
         & \mathbb{E}\left[\abs{\int^t_s \tilde{b}^R(Y^R_r)\,dr}^p\right]\le ||\tilde{b}^R||^p_\infty\abs{t-s}^p,
    \end{aligned}
\end{equation*}
and 
\begin{equation*}
    \begin{aligned}
         & \mathbb{E}\left[\abs{\int^t_s \tilde{\sigma}^R(Y^R_r)\,d\widetilde{W}_r}^p\right]\le    ||\tilde{\sigma}^R||^p_\infty\abs{t-s}^\frac{p}{2}.
    \end{aligned}
\end{equation*}
Moreover, for all $t,s\in [0,T]$ and $x\in B(N)$, we have
\begin{equation*}
    \begin{aligned}
         & \mathbb{E}[\abs{X_t(x)-X_s(x)}^p]\\
         =& \sum^\infty_{R=1}  \mathbb{E}\left[\abs{X^R_t(x)-X^R_s(x)}^p\mathds{1}_{\{R-1\le D_T(x)<R\}}\right]\\
         \le& \sum^\infty_{R=2}\left( \mathbb{E}\left[ \abs{X^R_t(x)-X^R_s(x)}\right]^{2p}\right)^\frac{1}{2}\left(\frac{\mathbb{E}[(D^{R-1}_T(x))^{2p}]}{(R-1)^{2p}} \right)^\frac{1}{2}+\mathbf{C}\abs{t-s}^\frac{p}{2}\\
         \le& \sum^\infty_{R=2} \mathbf{C}(T)\frac{\bigl( 1+\abs{x}^p+(\lambda^R)^p \bigr)^2}{(R-1)^{p}}\abs{t-s}^\frac{p}{2} +\mathbf{C}\abs{t-s}^\frac{p}{2}\\
         \le& \mathbf{C} (1+\abs{x}^{2p})\abs{t-s}^\frac{p}{2}\\
        \le& \mathbf{C}(N)\abs{t-s}^\frac{p}{2}.
    \end{aligned}
\end{equation*}
We completed the proof.
\end{proof}
}

By the Lemma \ref{th:two-point-estimate-final}, for all $p\ge2$, $t,s\in[0,T]$ and $x,y\in B(N)$, we have
\begin{equation}\label{Lp-x-t}
    \mathbb{E}\left[\abs{X_t(x)-X_s(y)}^p\right]\le \mathbf{C}(N)\left( \abs{x-y}^p+\abs{t-s}^{\frac{p}{2}} \right).
\end{equation} 
By Kolmogorov's lemma, we can obtain for any $N\in \mathbb{N}$, there exists a $\mathbb{P}$-null set $\Xi_N $ such that for any $\omega\notin \Xi_N$, $X_{\cdot}(\omega,\cdot):[0,T]\times B(N)\rightarrow \mathbb{R}^d$ is continuous.
If we set $\Xi:=\cup_{N=1}^\infty\Xi_N$, then $\mathbb{P}(\Xi)=0$ and 
$$
X_{\cdot}(\omega,\cdot):[0,T]\times\mathbb{R}^d\rightarrow \mathbb{R}^d \text{\ is continuous}, \ \ \forall \omega \notin \Xi.
$$
Similar to  the standard argument (cf. \cite{kunita1990}), the proof of for any $t\in [0,T]$, almost all $\omega$, the maps $x\mapsto X_{t}(\omega,x)$ is one-to-one  due to \eqref{eq:X-x-y} and \eqref{t-s}. For the reader's convenience, we give the details of one-to-one property.

For $x\neq y \in \mathbb{R}^d$, set
\begin{equation*}
    \begin{aligned}
         & \mathscr{R}(t,x,y):=\frac{1}{\abs{X_t(x)-X_t(y)}},
    \end{aligned}
\end{equation*}
then 
\begin{equation*}
    \begin{aligned}
         & \abs{\mathscr{R}(t,x,y)-\mathscr{R}(s,x',y')}\\
   \le &\frac{\abs{X_t(x)-X_t(y)-X_s(x')+X_s(y')}}{\abs{X_t(x)-X_t(y)}\abs{X_s(x')-X_s(y')}}\\ 
\le&\frac{\abs{X_t(x)-X_t(x')}+\abs{X_t(x')-X_s(x')}+\abs{X_t(y)-X_t(y')}+\abs{X_t(y')-X_s(y')}}{\abs{X_t(x)-X_t(y)}   \abs{X_s(x')-X_s(y')}}.
    \end{aligned}
\end{equation*}
By H\"older inequality, we have
\begin{equation*}
    \begin{aligned}
         & \mathbb{E}\abs{\mathscr{R}(t,x,y)-\mathscr{R}(s,x',y')}^p\\ 
      \le&\mathbf{C}\cdot\mathbb{E}\left[ \abs{X_t(x)-X_t(x')}^{2p} +\abs{X_t(x')-X_s(x')}^{2p} +\abs{X_t(y)-X_t(y')}^{2p}+\abs{X_t(y')-X_s(y')}^{2p}\right]^{\frac{1}{2}}\cdot\\
&\ \ \mathbb{E}\left[ \abs{X_t(x)-X_t(y)}^{-4p} \right]^{\frac{1}{4}} \mathbb{E}\left[ \abs{X_s(x')-X_s(y')}^{-4p} \right]^{\frac{1}{4}}.
    \end{aligned}
\end{equation*}
Moreover, for all $x,y,x',y'\in B(N)$ and $\abs{x-y}\wedge \abs{x'-y'}>\varepsilon$, we obtain
\begin{equation*}
    \begin{aligned}
         & \mathbb{E}\abs{\mathscr{R}(t,x,y)-\mathscr{R}(s,x',y')}^p\\ 
\le&\mathbf{C}(N)\left( \abs{x-x'}^p+\abs{t-s}^{\frac{p}{2}}+\abs{y-y'}^{p} + \abs{t-s}^{\frac{p}{2}}\right)\varepsilon^{-2p}.
    \end{aligned}
\end{equation*}
Choose $p>4(d+1)$, by Kolmogorov's lemma,  there exists a $\mathbb{P}$-null set $\Xi_{k,N}$ such that for all $\omega\notin \Xi_{k,N}$,  the mapping $(t,x,y)\mapsto \mathscr{R}(t,x,y)$ is continuous on 
$$\{(t,x,y)\in [0,T]\times B(N)\times B(N):\abs{x-y}>\frac{1}{k}\}\quad \forall\,k\in \mathbb{N}_+.$$ Set $\Xi:=\cup^\infty_{k,N=1}, \Xi_{k,N}$, then for any $\omega\notin \Xi$, the mapping $(t,x,y)\mapsto \mathscr{R}(t,x,y)$ is continuous on 
$$\{(t,x,y)\in [0,T]\times \mathbb{R}^d\times \mathbb{R}^d:x\neq y\}.$$ We proved  one-to-one property.
\end{proof}

\section{Appendix}
\begin{proof}
{\bf The Proof of Theorem \ref{studied-pde-theorem}:}\ {\bf Step (i)}  Suppose $\sigma^R(x)$  does not depend on $x$, Krylov proved the estimate \eqref{krylov-lemma} in \cite[Page $109$]{krylov2008}. Therefore, If $\sigma^R(x)\equiv\sigma^R(x_0)$, then
\begin{equation*}
    \begin{aligned}
         & \norm{(\lambda - L^{\sigma^R(x_0)})^{-1}f}_{2,p}\le C_0 \norm{f}_p.
    \end{aligned}
\end{equation*}

{\bf Step (ii)} Suppose for some $x_0\in\mathbb{R}^d$ 
\begin{equation}\label{sigma-differential-for-fix-x_0}
    \norm{\sigma^R(x)-\sigma^R(x_0)}\le \frac{1}{2\tilde{\delta}^{-\frac{1}{2}} C_0},
\end{equation}
we consider the following equation
\begin{equation}
    L^{\sigma^R(x_0)} u- \lambda u + g=0,
\end{equation}
where $g:=L^{\sigma^R(x)}- L^{\sigma^R(x_0)} + f$.
By \eqref{sigma-differential-for-fix-x_0} and the definition of $L^{\sigma^R(x)}$, we obtain
$$
\norm{g}_p \le \frac{1}{2C_0}\norm{u_{xx}}_p + \norm{f}_p.
$$
Hence, by {\bf Step (i)}, we have
\begin{equation}
  \norm{u_{xx}}_p\le C_0\norm{g}_p\le\frac{1}{2}\norm{u_{xx}}_p + C_0\norm{f}_p,
\end{equation}   
i.e.
\begin{equation}
    \norm{u_{xx}}_p\le 2 C_0\norm{f}_p.
\end{equation}

{\bf Step (iii)}
Define a smooth cut-off function as follows
\begin{equation}
    \zeta(x)=\begin{cases}
1,\ \ \abs{x}\le 1,\\
\in [0,1],\ \ 1<x<2,\\
0\ \ \abs{x}\ge 2.
    \end{cases}
\end{equation}
Fix a small constant $\varepsilon$ which will be determined below.\\
For fixed $z\in \mathbb{R}^d$, let
\[
\zeta^\varepsilon_z(x) := \zeta(\frac{x-z}{\varepsilon}).   
\]
It is easy to check that
\begin{equation}\label{nabla-zeta}
    \int_{\mathbb{R}^d}\abs{\nabla^j_x \zeta^\varepsilon_z(x)}^p\,dz=\varepsilon^{d-jp}\int_{\mathbb{R}^d}\abs{\nabla^j \zeta(z)}^p\,dz >0, \quad j= 0,1,2.
\end{equation}
Multiply both side of \eqref{studyed-pde} by $\zeta^\varepsilon_z(x)$, we have

\begin{equation}
    L^{\sigma^R(x)}(u\zeta^\varepsilon_z) -\lambda (u\zeta^\varepsilon_z) +g^\varepsilon_z =0,
\end{equation}
where $g^\varepsilon_z:=(L^{\sigma^R(x)} u)\zeta^\varepsilon_z - L^{\sigma^R(x)}(u\zeta^\varepsilon_z) - f\zeta^\varepsilon_z$.\\
Let
$$
\hat{\sigma}^R(x):=\sigma^R((x-z)\zeta^{2\varepsilon}_z(x) + z).
$$
It is easy to obtain 
$$
L^{\sigma^R(x)}(u\zeta^{\varepsilon}_z) = L^{\hat{\sigma}^R(x)}(u\zeta^{\varepsilon}_z),
$$
since $\zeta^{2\varepsilon}_z(x) = 1 $ for $\abs{x-z}\le 2\varepsilon$ and $\zeta^\varepsilon_z(x) = 0$ for $\abs{x-z}>2\varepsilon$.\\
{By \eqref{holder_continous}} and the definition of $g^\varepsilon_z$ we have
$$
\norm{\hat{\sigma}^R(x)-\hat{\sigma}^R(z)}\le \tilde{\delta}^{-\frac{1}{2}} \abs{(x-z)\zeta^{2\varepsilon}_z}^\varpi\le \tilde{\delta}^{-\frac{1}{2}}\abs{4\varepsilon}^\varpi,
$$
and 

$$
\norm{g^\varepsilon_z}_p \le \norm{f\zeta^\varepsilon_z}_p + \tilde{\delta}^{-1} \norm{\abs{u_x}\abs{(\zeta^\varepsilon_z)_x}}_p + \tilde{\delta}^{-1} \norm{\abs{u}\abs{(\zeta^\varepsilon_z)_{xx}}}_p.
$$
By {\bf Step (ii)}, if 

$$
L^{\sigma^R(x)} u -\lambda u + f = 0,\quad    \norm{\sigma^R(x)-\sigma^R(x_0)}\le \frac{1}{2\tilde{\delta}^{-\frac{1}{2}} C_0},
$$
then 
$$
\norm{u_{xx}}_p\le 2C_0 \norm{f}_p.
$$
Now, we consider the following equation:
\begin{equation}
    L^{\hat{\sigma}^R(x)}(u \zeta^\varepsilon_z) -\lambda (u\zeta^\varepsilon_z) = g^\varepsilon_z
\end{equation}
and take $\varepsilon$ be small enough so that
$$
\norm{\hat{\sigma}^R(x)-\hat{\sigma}^R(z)}\le \tilde{\delta}^{-\frac{1}{2}}\abs{4\varepsilon}^\varpi\le \frac{1}{2\tilde{\delta}^{-\frac{1}{2}} C_0},
$$
then 
\begin{equation}\label{u_xi_xx}
 \norm{(u\zeta^\varepsilon_z)_{xx}}_p\le 2C_0\norm{g^\varepsilon_z}_p\le 2C_0\left( \norm{f\zeta^\varepsilon_z}_p + \tilde{\delta}^{-1} \norm{\abs{u_x}\abs{(\zeta^\varepsilon_z)_x}}_p + \tilde{\delta}^{-1} \norm{\abs{u}\abs{(\zeta^\varepsilon_z)_{xx}}}_p \right).   
\end{equation}
According to Fubini's theorem, \eqref{nabla-zeta} and \eqref{u_xi_xx}, it is easy to check
\begin{equation}
    \begin{aligned}
         & \int_{\mathbb{R}^d}\int_{\mathbb{R}^d} \abs{(u\zeta^\varepsilon_z)_{xx}}^p\,dx\,dz
         \le C(p,\varepsilon,\tilde{\delta}^{-1},C_0)\left( \norm{u_x}_p^p + \norm{u}_p^p +\norm{f}_p^p \right).
    \end{aligned}
\end{equation}
Moreover, we have 
\begin{equation}
    \begin{aligned}
         \norm{u_{xx}}_p^p\lesssim & \int_{\mathbb{R}^d}\norm{(u)_{xx}\cdot \zeta^\varepsilon_z}_p^p\,dz\\
         \lesssim &\int_{\mathbb{R}^d}\norm{(u\zeta^\varepsilon_z)_{xx}-(u)_x(\zeta^\varepsilon_z)_x-u(\xi_z^\varepsilon)_{xx}}_p^p\,dz\\
         \le & C(p,\varepsilon,\tilde{\delta}^{-1},C_0)\left( \norm{u_x}_p^p + \norm{u}_p^p +\norm{f}_p^p \right)\\
         \le & \frac{1}{2}\norm{u_{xx}}_p^p + C(p,\varepsilon,\tilde{\delta}^{-1},C_0)(\norm{u}_p^p+\norm{f}_p^p)
    \end{aligned}
\end{equation}
where the third inequality is due to \eqref{nabla-zeta} and \eqref{u_xi_xx} and the last inequality is due to 
\begin{equation}\label{ux-le-uxx+u}
    \norm{u_x}_p\le C(\norm{u_{xx}}_p+\norm{u}_p).
\end{equation}
and Young's inequality.
Therefore, we proved
$$
\norm{u_{xx}}_p\le C(p,\varepsilon,\tilde{\delta}^{-1},C_0)(\norm{u}_p+\norm{f}_p).
$$
Since $\lambda u=L^{\sigma^R(x)} u -f$, we have

\begin{equation}
    \begin{aligned}
         \lambda \norm{u}_p\le& \left(\norm{L^{\sigma^R(x)} u}_p+\norm{f}_p\right)\\
         \le&  C(d,\varpi,\tilde{\delta},p)\left( \norm{u}_p+\norm{f}_p \right).
    \end{aligned}
\end{equation}
Hence, we obtain
\begin{equation}
    \begin{aligned}
         & \norm{u_{xx}}_p +\lambda\norm{u}_p \le C(d,\varpi,\tilde{\delta},p)\left( \norm{u}_p+\norm{f}_p \right).
    \end{aligned}
\end{equation}
Notice that $\lambda >(C(d,\varpi,\tilde{\delta},p)+1)$, we obtain
\begin{equation}\label{uxx+u}
    \begin{aligned}
         & \norm{u_{xx}}_p +\norm{u}_p \le C(d,\varpi,\tilde{\delta},p)\norm{f}_p,
    \end{aligned}
\end{equation}
Combine \eqref{uxx+u} with \eqref{ux-le-uxx+u},
we get
$$\norm{u}_{2,p}\le C_1(d,\varpi,\tilde{\delta},p)\norm{f}_p. $$

{\bf Step (iv)} Set
$$
\mathcal{T}_{t}f(x):= \int_{\mathbb{R}^d} f(y)\rho(t,x,y)\,dy,
$$
where $\rho(t,x,y)$ is the fundamental solution of the operator $\partial_t - L^{\sigma^R(x)}$.  It is well-known that
\begin{equation}\label{heat_kernel}
    \abs{\nabla^j_x\rho(t,x,y)}\le C_j(\varpi,\tilde{\delta},d)t^{-j/2}(2t)^{-d/2}e^{-k_j(\varpi,\tilde{\delta},d)\abs{x-y}^2/(2t)}.
\end{equation}
By \cite[Lemma $3.4$]{zhang2016}, for any $p,p'\in (1,\infty)$ and $\alpha\in [0,2)$, there exists a constant  
$C=C(d,\varpi,\tilde{\delta},p,\alpha,p')$ such that for any $f\in L^p(\mathbb{R}^d)$,
\begin{equation}\label{mathcal-T-alpha-p'-estimate}
    \norm{\mathcal{T}_{t}f}_{\alpha,p'}\le C t^{(-\frac{\alpha}{2}-\frac{d}{2p}+\frac{d}{2p'})} \norm{f}_p.
\end{equation}
Let $f\in W^{2,p}(\mathbb{R}^d)$ and
\begin{equation}
    u(x):=\int^\infty_0 e^{-\lambda t}\,\mathcal{T}_{t}f(x)\,dt.
\end{equation}
By \eqref{heat_kernel} and the definition of $\mathcal{T}_{t}$, it is easy to check $u\in W^{2,p}(\mathbb{R}^d)$ and $u$ satisfies \eqref{studyed-pde}. Indeed,
\begin{equation}
    \begin{aligned}
          L^{\sigma^R(x)} u(x)&=\int^\infty_0 e^{-\lambda t}\int_{\mathbb{R}^d} f(y) L^{\sigma^R(x)}\rho(t,x,y) \,dy\,dt\\
          &=\int^\infty_0 e^{-\lambda t}\int_{\mathbb{R}^d} f(y) \partial_t\rho(t,x,y) \,dy\,dt\\
          &= \int_{\mathbb{R}^d} f(y) \left({\left.e^{-\lambda t}\rho(t,x,y)\right|^\infty_0 }+\lambda \int^\infty_0 e^{-\lambda t}\rho(t,x,y)\,dt\right)\,dy \\
          &= f(x) +\lambda u(x).
    \end{aligned}
\end{equation}
By Jensen's inequality, we obtain
\begin{equation}
    \begin{aligned}
          \abs{\Delta^{\frac{\alpha}{2}} u}^{p'}&=\abs{\int^\infty_0e^{-\lambda t}\Delta^{\frac{\alpha}{2}}\mathcal{T}_tf(x)\,dt}^{p'}\\
          &\le \left(\frac{1}{\lambda}\right)^{p'}\left( \int^\infty_0 \lambda e^{-\lambda t}\abs{\Delta^{\frac{\alpha}{2}}\mathcal{T}_t f(x)}^{p'}\,dt \right)
    \end{aligned}
\end{equation}
and 
\begin{equation}
    \abs{u}^{p'}\le \left(\frac{1}{\lambda}\right)^{p'}\left(\int^\infty_0 \lambda e^{-\lambda t} \abs{\mathcal{T}_t f(x)}^{p'}\,dt\right).
\end{equation}
By Fubini's theorem, we have
\begin{equation}\label{alpha/2}
    \begin{aligned}
          \norm{\Delta^{\frac{\alpha}{2}} u}^{p'}_{p'}
          \le \left(\frac{1}{\lambda}\right)^{p'}\left( \int^\infty_0 \lambda e^{-\lambda t}\norm{\Delta^{\frac{\alpha}{2}}\mathcal{T}_t f(x)}^{p'}_{p'}\,dt \right),
    \end{aligned}
\end{equation}
and 
\begin{equation}\label{alpha=0}
    \norm{u}^{p'}_{p'}\le \left(\frac{1}{\lambda}\right)^{p'}\left(\int^\infty_0 \lambda e^{-\lambda t} \norm{\mathcal{T}_t f(x)}^{p'}_{p'}\,dt\right).
\end{equation}
Moreover, by \eqref{frac-norm-equ}, \eqref{mathcal-T-alpha-p'-estimate},\eqref{alpha/2} and \eqref{alpha=0}, if $(\frac{d}{p}+\alpha-\frac{d}{p'})/2<\frac{1}{p'}\le 1$, then
\begin{equation}
    \begin{aligned}
         \norm{u}^{p'}_{\alpha,p'}&\lesssim \norm{f}_p^{p'} \left(\frac{1}{\lambda}\right)^{p'}\lambda\int^\infty_0 e^{-\lambda t}\,t^{(-\frac{\alpha}{2}-\frac{d}{2p}+\frac{d}{2p'})p'}\,dt\\
        &\le \norm{f}_p^{p'}\lambda^{-p'}\frac{1}{\lambda^{(-\frac{\alpha}{2}-\frac{d}{2p}+\frac{d}{2p'})p'}}\\
        &= \norm{f}_p^{p'}\lambda^{p'(\alpha-2+\frac{d}{p}-\frac{d}{p'})/2},
    \end{aligned}
\end{equation}
where the second inequality is due to Laplace transformation. 

{\bf Step (v)}  In this step, we will use weak convergence argument to prove the existence of  \eqref{studyed-pde}.  Let $\varphi$ be a nonnegative smooth function in $\mathbb{R}^d$ which satisfies $\int_{\mathbb{R}^d} \varphi(x)\,dx=1$ and support in $\{ x\in\mathbb{R}^d:\abs{x}\le 1\}$. Let
\begin{equation}
    \begin{aligned}
         & \varphi_n(x):=n^d\varphi(nx),\quad \sigma_n:=\sigma*\varphi_n,\quad f_n:=f*\varphi_n,
    \end{aligned}
\end{equation}
where $*$ denotes the convolution.\\
Denote $u_n$ be the solution of 

\begin{equation}
    L^{\sigma^R_n(x)} u_n -\lambda u_n= f_n.
\end{equation}
By the {\bf Step (iii)} and {\bf Step (iv)}, we have
\begin{equation}
    \norm{u_n}_{2,p}\le C_1\norm{f}_p
\end{equation}
and
\begin{equation}
    \norm{u_n}_{\alpha,p'}\le C_2 \lambda^{(\alpha-2+\frac{d}{p}-\frac{d}{p'})/2}\norm{f}_p.
\end{equation}
Since $W^{2,p}(\mathbb{R}^d)$ be weak compactness, we can find a subsequence still denoted by $u_n$ and $u\in W^{2,p}(\mathbb{R}^d)$ such that $u_n\rightharpoonup u$ in $W^{2,p}(\mathbb{R}^d)$.

For any test function $\phi\in C^\infty_0(\mathbb{R}^d)$, we have
\begin{equation}
    \begin{aligned}
          &\int_{\mathbb{R}^d} \left( L^{\sigma_m(x)} u_n - L^{\sigma(x)} u_n \right)\phi\,dx\\
\le& C_\phi\norm{\sigma_m-\sigma}_\infty \norm{(u_n)_{xx}}_p\\
\le& C_\phi\norm{\sigma_m-\sigma}_\infty \norm{f}_p
\rightarrow 0 \text{\quad $(m\rightarrow 0)$\quad uniformly in $n$},
    \end{aligned}
\end{equation}
and for fixed $m$

\begin{equation}
    \int_{\mathbb{R}^d} \left( L^{\sigma_m(x)} u_n - L^{\sigma_m(x)} u \right)\phi\,dx\rightarrow 0,\quad \text{as $n\rightarrow \infty$}.
\end{equation}
Hence, we obtain
\begin{equation}
    \int_{\mathbb{R}^d} \left( L^{\sigma_n(x)} u_n - L^{\sigma(x)} u \right)\phi\,dx\rightarrow 0,\quad \text{as $n\rightarrow \infty$}.
\end{equation}
Notice that
\begin{equation}
    \langle L^{\sigma_n(x)} u_n,\phi\rangle -\langle\lambda u_n,\phi\rangle= \langle f_n,\phi\rangle.
\end{equation}
Take $n\rightarrow \infty$, we obtain
\begin{equation}
    \langle L^{\sigma(x)} u,\phi\rangle -\langle\lambda u,\phi\rangle= \langle f,\phi\rangle.
\end{equation}
On the other hand, let $p_*:=\frac{p'}{p'-1}$ and keep in mind  $u_n\rightharpoonup u$ in $W^{2,p}(\mathbb{R}^d)$, we have
\begin{equation}
    \begin{aligned}
        \norm{u}_{\alpha,p'}=\norm{\left(I-\Delta^\frac{\alpha}{2}\right)u}_{p'}&=\sup_{\phi\in C^\infty_0(\mathbb{R}^d);\norm{\phi}_{p_*}\le 1}\abs{\int_{\mathbb{R}^d}\left\langle \left( I-\Delta^\frac{\alpha}{2}\right) u(x),\phi(x) \right\rangle\,dx}\\
        &= \sup_{\phi\in C^\infty_0(\mathbb{R}^d);\norm{\phi}_{p_*}\le 1}\lim_{n\rightarrow\infty}\abs{\int_{\mathbb{R}^d}\left\langle  u_n(x),\left( I-\Delta^\frac{\alpha}{2}\right)\phi(x)\right\rangle\,dx}\\ 
        &= \sup_{\phi\in C^\infty_0(\mathbb{R}^d);\norm{\phi}_{p_*}\le 1}\lim_{n\rightarrow\infty}\abs{\int_{\mathbb{R}^d}\left\langle  \left( I-\Delta^\frac{\alpha}{2}\right)u_n(x),\phi(x)\right\rangle\,dx}\\ 
        &\le \sup_n\sup_{\phi\in C^\infty_0(\mathbb{R}^d);\norm{\phi}_{p_*}\le 1}\norm{\left(I-\Delta^\frac{\alpha}{2}\right)u_n}_{p'}\\
 &=\sup_n \norm{u_n}_{\alpha,p'}\le C_2 \lambda^{(\alpha-2+\frac{d}{p}-\frac{d}{p'})/2}\norm{f}_p.
    \end{aligned}
\end{equation}
We completed the proof.
\end{proof}

\section*{Acknowledgement}
The author is greatly indebted to Professor Xin Chen for many useful discussions and for the guidance over the past years.
\newpage
\noindent

\end{document}